\documentclass[11pt]{amsart}
\usepackage[T1]{fontenc}
\usepackage[utf8]{inputenc}
\usepackage{lmodern}
\usepackage{microtype}
\usepackage{amsmath,amssymb,amsthm,mathtools}
\usepackage{enumitem}
\usepackage{textcase}
\usepackage[colorlinks=false,pdfborder={0 0 1},linkbordercolor={1 0 0},citebordercolor={1 0 0},urlbordercolor={1 0 0},filebordercolor={1 0 0},breaklinks=true,linktoc=page]{hyperref}
\usepackage{aliascnt}
\usepackage{geometry}
\numberwithin{equation}{section}

\newtheorem{theorem}{Theorem}[section]
\newaliascnt{proposition}{theorem}
\newtheorem{proposition}[proposition]{Proposition}
\aliascntresetthe{proposition}
\newaliascnt{lemma}{theorem}
\newtheorem{lemma}[lemma]{Lemma}
\aliascntresetthe{lemma}
\newaliascnt{corollary}{theorem}
\newtheorem{corollary}[corollary]{Corollary}
\aliascntresetthe{corollary}
\newaliascnt{definition}{theorem}
\newtheorem{definition}[definition]{Definition}
\aliascntresetthe{definition}
\newaliascnt{remark}{theorem}
\newtheorem{remark}[remark]{Remark}
\aliascntresetthe{remark}

\DeclareMathOperator{\ordop}{ord}
\newcommand{\gr}{\operatorname{gr}}
\newcommand{\Qp}{\mathbb{Q}_p}
\newcommand{\Zp}{\mathbb{Z}_p}
\newcommand{\Fp}{\mathbb{F}_p}

\newcommand{\Sp}{S_p}
\newcommand{\Tp}{T_p}

\newcommand{\bigo}{O}
\newcommand{\vp}{\ordop_p}
\newcommand{\wt}{\operatorname{wt}}

\title{Coefficient-Level B\"ottcher Theory for Wild Superattracting Germs of Degree \NoCaseChange{\texorpdfstring{$p^e$}{pe}}}
\author{Rufei Ren}
\address{Department of Mathematics, Fudan University}
\email{rufeir@fudan.edu.cn}

\begin{document}
\begin{abstract}
Let $p$ be an odd prime, let $e\ge2$, and put $q=p^e$.  We study the wild family
\[
\varphi_{r,e}(x)=x^q+qp^r x^{q+1}=x^{p^e}+p^{r+e}x^{p^e+1}
\qquad (r\ge0),
\]
and the inverse B\"ottcher coordinate $f_{r,e}(x)=x\sum_{k\ge0}a_k(r,e)x^k/k!$ characterized by
\[
\varphi_{r,e}(f_{r,e}(x))=f_{r,e}(x^q).
\]
For the clean family, we prove a complete mod-$p$ digit-sum law in the special fiber $r=0$.  For the higher fibers $r\ge1$, we prove a coefficient-level theorem consisting of a global digit-weight lower bound, a leading monomial theorem on divisible non-pure classes, a lag-$e$ pure-power recursion, and subadditivity of the induced digit weight.  This yields the pure-power branch word
\[
(B^{e-1}A)^{\lceil r/e\rceil}B^\infty
\]
and the radius formula
\[
\rho(f_{r,e})=p^{-\theta_{r,e}},\qquad \theta_{r,e}=p^{-e\lceil r/e\rceil}\left(\frac{1}{p-1}+e\lceil r/e\rceil-r\right).
\]
We then prove a tail-stable extension.  In the special fiber, $p$-divisible tails preserve the digit-sum law modulo $p$.  In the higher fibers, tails satisfying $\vp(\vartheta_h)\ge\Lambda_{r,e}(h+1)+1$ lie beyond the clean-family initial $\Lambda_{r,e}$-graded term and therefore preserve the leading terms, the pure-power branch word, the valuation asymptotic, and the radius.  For $e=2$, this recovers the Salerno--Silverman degree-$p^2$ family and the Fu--Nie radius statement for the inverse coordinate in that family.
\end{abstract}

\maketitle
\tableofcontents

\section{Introduction}
The local study of superattracting germs begins with B\"ottcher's theorem \cite{Bottcher1904}, which gives a canonical coordinate in which a germ is conjugate to a pure monomial.  In one complex variable this coordinate is central both near a superattracting fixed point and, for polynomials, near infinity, where it encodes the escape-rate function and the external geometry of Julia sets; see, for example, \cite{AlexanderIavernaroRosa,DouadyHubbard1,DouadyHubbard2,Milnor,Ritt}.  The same normal form also appears in higher-dimensional holomorphic dynamics \cite{BuffEpsteinKoch, HubbardPapadopol}.

Over non-archimedean fields, B\"ottcher coordinates are part of a broader local and global theory of rational dynamics over valued fields; see, for example, \cite{BakerRumely,Benedetto,FavreRiveraLetelier, Lubin,RiveraLetelier,SilvermanADS}.  In this setting the coefficients of the coordinate carry arithmetic information, reflecting ramification, integrality, and valuation phenomena.  In polynomial dynamics, $p$-adic B\"ottcher coordinates have also been used in the study of arboreal Galois representations and bounded-height problems in families \cite{DeMarcoGhiocaKriegerNguyenTuckerYe,Ingram}.  Fix an odd prime $p$, an integer $e\ge 2$, and put $q=p^e$.  In this paper we study the wild one-parameter family
\begin{equation}\label{eq:main-family}
        \varphi_{r,e}(x)=x^q+qp^r x^{q+1}=x^{p^e}+p^{r+e}x^{p^e+1},
        \qquad q=p^e,
\end{equation}
where $r\ge 0$, together with its inverse B\"ottcher coordinate
\[
        f_{r,e}(x)=x\sum_{k\ge 0}\frac{a_k(r,e)}{k!}x^k,
        \qquad a_0(r,e)=1,
\]
normalized by
\[
        \varphi_{r,e}\bigl(f_{r,e}(x)\bigr)=f_{r,e}(x^q).
\]
When $e$ is fixed we suppress it from the notation and write $a_k(r)$ and $f_r$.  Equivalently, if $\Phi_{r,e}=f_{r,e}^{-1}$, then
\[
        \Phi_{r,e}\bigl(\varphi_{r,e}(x)\bigr)=\Phi_{r,e}(x)^q.
\]

Our aim is to describe the coefficients $a_k(r,e)$ directly in degree $p^e$.  In the special fiber $r=0$ we determine all coefficients modulo $p$ by a closed digit-sum law.  In the higher fibers $r\ge1$ we prove a coefficient-level theorem consisting of a global digit-weight lower bound, a leading monomial theorem on divisible non-pure classes, a pure-power recursion with lag $e$, and an exact radius formula.  We formulate the degree-$p^e$ problem first and view the degree-$p^2$ case only as the specialization $e=2$.

The case $e=2$ is the family isolated by Salerno and Silverman in their wild conjectures \cite{SalernoSilverman}.  Fu and Nie proved the radius in a substantially broader wild superattracting setting \cite{FuNie}.  In their notation the B\"ottcher coordinate $\Phi$ is our $f_{r,2}^{-1}$, so the radius of $\Phi^{-1}$ in \cite{FuNie} is exactly the radius of our $f_{r,2}$.  The formulas proved below for $e>2$ belong to the present extension.

\medskip
\noindent\emph{Why the degree-$p^e$ theorem is not formal.}
The higher-fiber theorem is driven by the pure-power recursion.  For general $e$ this recursion has lag $e$, and its branch word is
\[
        (B^{e-1}A)^sB^\infty,
        \qquad s=\left\lceil\frac r e\right\rceil.
\]
This branch word is what produces both the stable pure-power slope and the exponent in the radius formula.  Only after the general pattern has been established does one recover the degree-$p^2$ case by setting $e=2$.

\medskip
\noindent\emph{Method.}
Our higher-fiber argument is built around a filtered cumulant principle.  We write the $B$-coefficient valuation as a sum of two carry defects.  In the divisible non-pure sector we then show that every unit scalar term is already carry-free, and from this point the degree-$\Lambda_{r,e}$ initial unit sector is read off from the carry-free cumulant coefficient.  This is the key step behind the leading monomial theorem on divisible classes.  At stable layers one may still use the conceptual quotient with relations $Y_j^p=Y_{j+1}$, but the scalar calculation is finished before passing to that quotient.

The clean-family results are the main theorems of the paper.  After stating them, we turn to a perturbative extension showing that the same coefficient-level structure remains stable under sufficiently small higher-order tails.  In the higher-fiber statement, the condition
\[
        \vp(\vartheta_h)\ge \Lambda_{r,e}(h+1)+1
\]
is chosen so that every tail contribution lies strictly above the initial $\Lambda_{r,e}$-graded term.

We first state the special-fiber congruence law.  In this statement $a_k=a_k(0,e)$.

\begin{theorem}[Digit-sum formula in the special fiber]\label{thm:digit-sum}
Let $k\ge 1$ and write
\[
        k=a+d_1p+d_2p^2+\cdots+d_Np^N,
        \qquad 0\le a,d_i\le p-1,
\]
with
\[
        s:=d_1+\cdots+d_N.
\]
Then
\begin{equation}\label{eq:digit-formula-aa}
        a_k\equiv (-1)^s(a+1)^s a_a \pmod p.
\end{equation}
In fact,
\begin{equation}\label{eq:closed-digit-formula}
        a_k\equiv (-1)^{a+s}(a+1)^{a+s-1}\pmod p.
\end{equation}
\end{theorem}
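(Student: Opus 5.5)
The plan is to work with $g(x)=f(x)/x=\sum_{k\ge0}a_kx^k/k!$ and turn the conjugacy into an additive equation. With $r=0$ we have $\varphi_{0,e}(x)=x^q(1+qx)$, so $\varphi_{0,e}(f(x))=f(x^q)$ becomes
\[
g(x)^q\bigl(1+qxg(x)\bigr)=g(x^q).
\]
Taking logarithms and writing $g=\exp(h)$ gives $qh(x)+\log(1+qxg(x))=h(x^q)$. Iterating this identity formally yields
\[
h(x)=-\sum_{j\ge0}q^{-(j+1)}\log\bigl(1+qx^{q^j}g(x^{q^j})\bigr).
\]
The $j=0$ term is $-xg(x)+\tfrac q2x^2g(x)^2-\cdots$. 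Modulo the ideal of series whose divided-power coefficients $a_k$ lie in $p\Zp$, I expect the principal part to be the relation $g\equiv\exp(-xg)$.

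The first goal is the case $k=a\le p-1$, that is $s=0$. Here I will show that, up to degree $p-1$, all contributions other than $-xg$ vanish modulo $p$. The terms with $j\ge1$ start in degree $\ge q=p^e>p$. The terms $q^{m-1}(xg)^m/m$ with $m\ge2$ carry the factor $p^{e(m-1)}/m$, which is divisible by $p$ once $m<p$ or $e\ge2$. Then $W:=xg$ satisfies $W\equiv x e^{-W}$ in low degree, so $W$ is the Lambert series. Lagrange inversion gives
\[
g=W/x=\sum_a(-1)^a(a+1)^{a-1}\frac{x^a}{a!},
\]
hence $a_a\equiv(-1)^a(a+1)^{a-1}$. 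Granting \eqref{eq:digit-formula-aa}, this already yields \eqref{eq:closed-digit-formula}.

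The main step is the digit law \eqref{eq:digit-formula-aa}. I will prove it through the shift congruence
\[
a_{k+p^j}\equiv-(a+1)\,a_k\pmod p,\qquad j\ge1,
\]
valid whenever the $j$-th digit of $k$ is at most $p-2$. This congruence is no-carry, so iterating it over the digits gives the factor $(-(a+1))^s$. To obtain the shift congruence I will pass from divided powers to the coefficients $a_k$ using Lucas' theorem, namely $k!\equiv a!\prod_i d_i!\,\cdot(\text{$p$-adic unit pattern})$, in the form of the $p$-typical decomposition of the divided-power algebra over $\Fp$. That algebra is a tensor product of truncated algebras $\Fp[y_i]/(y_i^p)$ with $y_i=x^{p^i}/(p^i)!$. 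In that language $\exp(-xg)$ should factor as
\[
\bigl(\text{low part in }y_0\bigr)\cdot\prod_{i\ge1}\exp_{\mathrm{trunc}}\bigl(-(a+1)y_i\bigr),
\]
where the scalar $-(a+1)$ comes from differentiating $xg$ with respect to the $y_i$-direction, i.e. from $\partial_x(x\,x^a)=(a+1)x^a$. Equivalently, the $p^i$-th divided-power derivative acts on $g$ as multiplication by $-(a+1)$ on the $x^a$-component modulo $p$.

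The hardest part, I expect, is controlling the higher-order terms above degree $p$. These are the $j\ge1$ terms of the iterated logarithm, together with the $m\ge p$ terms of $\log(1+qxg)$, where $q^{m-1}/m$ is no longer visibly divisible by $p$. I must show that, after dividing by $q^{j+1}$ and re-expressing in divided powers, their contributions to $a_k$ are divisible by $p$. My first attempt is a valuation count: the coefficient of $x^k/k!$ in such a term has valuation at least $e(m-1)-\vp(m)+\vp(k!)-\vp(\text{product of factorials})$. I would then show this is positive using $e\ge2$ and Legendre's formula $\vp(k!)=(k-\text{digit sum})/(p-1)$.
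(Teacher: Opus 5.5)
Your first step, the Lambert/Lagrange computation for $k=a\le p-1$, is fine and agrees with Proposition~\ref{prop:first-block}. However, the main step, the digit law, is not proved.

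\textbf{The gap.} The shift congruence $a_{k+p^j}\equiv-(a+1)a_k$ is only motivated, and the motivation does not work as stated.
\begin{enumerate}[label=\textup{(\roman*)},leftmargin=2.2em]
\item The proposed factorization of $\exp(-xg)$ is not well defined. The scalar $-(a+1)$ depends on the $y_0$-degree $a$, so at best you get a sum over $a$ of such products, not a single product.
\item The mechanism ``differentiate in the $y_i$-direction'' needs a chain rule $\delta\exp(z)=\exp(z)\,\delta z$, and that fails here. Write $\Gamma=\{\sum_k c_kx^k/k!:c_k\in\Zp\}$ and $\Gamma^{+}$ for its augmentation ideal. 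For $i\ge1$, neither $\partial/\partial y_i$ nor $(d/dx)^{p^i}$ (which sends $x^k/k!$ to $x^{k-p^i}/(k-p^i)!$) is a PD-derivation of $\Gamma/p\Gamma$. For example, $(d/dx)^p$ kills $x$ but fixes $e^x=\sum_n x^n/n!$.
\item The control of the higher-order terms is left as an unexecuted valuation count.
\end{enumerate}
So as written, nothing connects $g\equiv\exp(-xg)$ to the coefficients beyond degree $p-1$.

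\textbf{The missing idea.} No digit-by-digit argument is needed: your relation holds in all degrees inside $\Gamma$, and it determines $g \bmod p$ outright.
\begin{enumerate}[label=\textup{(\arabic*)},leftmargin=2.2em]
\item Integrality of the $a_k$ gives $g\in1+\Gamma^{+}$. Since $z^m/m!\in\Gamma$ for $z\in\Gamma^{+}$, we get $h=\log g\in\Gamma^{+}$.
\item The same fact gives $\frac1q\log(1+qxg)=\sum_m(-1)^{m-1}q^{m-1}(m-1)!\,(xg)^m/m!\equiv xg \pmod{p\Gamma}$.
\item One has $\frac1q h(x^q)\in p\Gamma$, because the coefficient of $x^{qk}/(qk)!$ acquires the factor $(qk)!/(q\,k!)$, whose valuation is $\mu_ek-e\ge1$.
\item So $qh=h(x^q)-\log(1+qxg)$ gives $h\equiv-xg \pmod{p\Gamma}$, with no iteration needed. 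Since $\exp(pz)\in1+p\Gamma$ for $z\in\Gamma^{+}$, this yields $g\equiv\exp(-xg) \pmod{p\Gamma}$ in every degree.
\item In $\Gamma/p\Gamma$, $W=xg$ satisfies $W=x\exp(-W)$. The degree-$m$ part of the right side involves only the parts of $W$ of degree $<m$, so $W$ is unique.
\item The Lambert series $\widetilde W=\sum_{m\ge1}(-m)^{m-1}x^m/m!$ lies in $\Gamma^{+}$ in every degree and satisfies the same equation. Hence $W\equiv\widetilde W$ and $g\equiv\exp(-\widetilde W)=\widetilde W/x=\sum_{k\ge0}(-1)^k(k+1)^{k-1}x^k/k!$.
\end{enumerate}
Thus $a_k\equiv(-1)^k(k+1)^{k-1}\pmod p$ for \emph{every} $k$. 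Since $k\equiv a+s\pmod{p-1}$, $(-1)^k=(-1)^{a+s}$, and $k+1\equiv a+1\pmod p$, Fermat's little theorem gives \eqref{eq:closed-digit-formula}; both sides vanish when $a=p-1$. Then \eqref{eq:digit-formula-aa} follows by comparing with the case $k=a$. Your shift congruence comes out as a corollary, valid even with carries.

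\textbf{Comparison with the paper.} Completed this way, your route is a genuinely different and much shorter proof. The paper uses the logarithm/Lambert argument only for the first block. It treats higher digits through the $A$--$B$--$C$ recursion, carry defects, vector partitions and a cumulant collapse, with a separate truncated-exponential argument for $a=0$.
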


As a first corollary, one obtains explicit congruences in the three residue classes $0,-1,-2\pmod p$.

\begin{theorem}[Special-fiber residue classes]\label{thm:SS-special}
For every integer $m\ge 1$ one has
\[
        a_{pm}\equiv (-1)^m,
        \qquad
        a_{pm-1}\equiv 0,
        \qquad
        a_{pm-2}\equiv -1\pmod p.
\]
\end{theorem}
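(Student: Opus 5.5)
The plan is to specialize the closed formula \eqref{eq:closed-digit-formula} of Theorem~\ref{thm:digit-sum} to the three residue classes. In each case I write down the base-$p$ digits of $k$ explicitly, read off the last digit $a$ and the upper digit sum $s$, and evaluate $(-1)^{a+s}(a+1)^{a+s-1}$ modulo $p$. The only care needed is to check that the exponent $a+s-1$ is nonnegative, and positive when the base is $0$ modulo $p$, so that the expression has its intended meaning.

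\emph{Case $k=pm$.} Here $a=0$, and $d_1,\dots,d_N$ are the base-$p$ digits of $m$, so $s$ is the digit sum of $m$. Since $m\ge1$ we have $s\ge1$, so the exponent $s-1$ is nonnegative. The formula gives $a_{pm}\equiv(-1)^s\cdot 1^{s-1}=(-1)^s$. Because $p^i\equiv1\pmod{p-1}$, we get $s\equiv m\pmod{p-1}$. As $p$ is odd, $p-1$ is even, so $s\equiv m\pmod 2$ and hence $(-1)^s=(-1)^m$.

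\emph{Case $k=pm-1$.} Write $k=p(m-1)+(p-1)$, so $a=p-1$ and $s$ is the digit sum of $m-1$, possibly $s=0$. Then $a+1=p\equiv0$, and the exponent is $a+s-1=p-2+s\ge1$ because $p\ge3$. Hence $a_{pm-1}\equiv0\pmod p$.

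\emph{Case $k=pm-2$.} Write $k=p(m-1)+(p-2)$, which satisfies $k\ge1$ since $p\ge3$. Here $a=p-2$ and $a+1=p-1\equiv-1$. The exponent $a+s-1=p-3+s$ is nonnegative. The formula gives
\[
(-1)^{p-2+s}(-1)^{p-3+s}=(-1)^{2p-5+2s}=-1 .
\]

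There is no real obstacle here beyond correctly invoking Theorem~\ref{thm:digit-sum}. The one subtle point is the parity identification $(-1)^s=(-1)^m$ in the first case, which uses that $p$ is odd.
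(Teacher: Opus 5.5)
Your proof is correct and is essentially the paper's argument, since both read the three congruences off Theorem~\ref{thm:digit-sum}. The only difference is bookkeeping. The paper cites Theorem~\ref{thm:a-zero} directly for $a_{pm}$ and uses \eqref{eq:digit-formula-aa} together with Proposition~\ref{prop:first-block} for the other two. You use the closed form \eqref{eq:closed-digit-formula} throughout, which is why you need the (correct) parity step $(-1)^{\Sp(m)}=(-1)^m$.
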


We then state the recursive theorem for the higher fibers.  Fix $r\ge 1$.  Put
\[
        v_i(r,e):=\vp\bigl(a_{p^i}(r,e)\bigr)\qquad (i\ge 0),
\]
and, when $e$ is fixed, write simply $v_i(r)$.  For $k=\sum_{i\ge 0}k_ip^i$, define
\[
        \Lambda_{r,e}(k):=\sum_{i\ge 0}k_iv_i(r,e),
        \qquad
        m_{r,e}(k):=\prod_{i\ge 0}a_{p^i}(r,e)^{k_i}.
\]
Set
\[
        \mu_e:=\frac{p^e-1}{p-1}=1+p+\cdots+p^{e-1}.
\]
For $n\ge e$ define
\[
\begin{aligned}
        \Delta_{n,e}&:=\mu_e p^{n-e}-e,\\
        A_{n,e}(r)&:=\Delta_{n,e}+v_{n-e}(r,e),\\
        B_{n,e}(r)&:=p\,v_{n-1}(r,e),
\end{aligned}
\]
and for $n<e$ regard $A_{n,e}$ as absent.  Finally set
\[
\begin{aligned}
        \alpha_{n,e}&:=\frac{(p^n)!}{p^e(p^{n-e})!} && (n\ge e),\\
        \gamma_{n,e}&:=\frac{(p^n)!}{p\bigl((p^{n-1})!\bigr)^p}\binom{p^e-1}{p-1} && (n\ge 1).
\end{aligned}
\]

\begin{theorem}[Recursive structure in degree $p^e$]\label{thm:higher-structure}
For every fixed $r\ge 1$ and $e\ge 2$, the following assertions hold.
\begin{enumerate}[label=\textup{(\arabic*)},leftmargin=2.2em]
\item \emph{Global digit-weight lower bound.}  For every $k\ge 1$,
\[
        \vp\bigl(a_k(r,e)\bigr)\ge \Lambda_{r,e}(k).
\]
\item \emph{Leading monomial on divisible non-pure classes.}  If $p\mid k$ and $k$ is not a power of $p$, then
\[
        a_k(r,e)\equiv m_{r,e}(k)\pmod {p^{\Lambda_{r,e}(k)+1}}.
\]
\item \emph{Pure-power recursion.}  One has $v_0(r,e)=r$ and, for $1\le n<e$,
\[
        v_n(r,e)=p\,v_{n-1}(r,e)=p^nr.
\]
For every $n\ge e$,
\[
        v_n(r,e)=\min\{A_{n,e}(r),B_{n,e}(r)\},
        \qquad
        A_{n,e}(r)\ne B_{n,e}(r).
\]
Moreover the corresponding leading branch is
\[
        a_{p^n}(r,e)=
        \begin{cases}
        -\gamma_{n,e}a_{p^{n-1}}(r,e)^p+\bigo\bigl(p^{v_n(r,e)+1}\bigr),&1\le n<e,\\[1mm]
        \alpha_{n,e}a_{p^{n-e}}(r,e)+\bigo\bigl(p^{v_n(r,e)+1}\bigr),& n\ge e,\ A_{n,e}(r)<B_{n,e}(r),\\[1mm]
        -\gamma_{n,e}a_{p^{n-1}}(r,e)^p+\bigo\bigl(p^{v_n(r,e)+1}\bigr),& n\ge e,\ B_{n,e}(r)<A_{n,e}(r).
        \end{cases}
\]
In particular $v_n(r,e)\le p\,v_{n-1}(r,e)$ for all $n\ge 1$.
\item \emph{Subadditivity of the digit weight.}  For every finite sum $N=n_1+\cdots+n_t$ of nonnegative integers,
\[
        \Lambda_{r,e}(N)\le \Lambda_{r,e}(n_1)+\cdots+\Lambda_{r,e}(n_t).
\]
\end{enumerate}
\end{theorem}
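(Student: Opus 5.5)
We plan to prove all four assertions of Theorem~\ref{thm:higher-structure} together, by strong induction on $k$. The starting point is the functional equation written for $g(x)=\sum_k a_k x^k/k!$, where $f_{r,e}=xg$. Dividing $\varphi_{r,e}(f_{r,e}(x))=f_{r,e}(x^q)$ by $x^q$ gives
\[
g(x)^q\bigl(1+qp^r x\,g(x)\bigr)=g(x^q).
\]
Extracting the coefficient of $x^k$ and multiplying by $k!/q$ expresses $a_k$ through three kinds of terms:
\begin{itemize}[leftmargin=2em]
\item[(i)] the term $\frac{k!}{q\,(k/q)!}\,a_{k/q}$ coming from $g(x^q)$, present only when $q\mid k$;
\item[(ii)] minus the nonlinear part of $[x^k]g^q$, namely the sums $\frac{k!}{q}\binom{q}{j_1,\dots}\prod a_{k_i}/k_i!$ over compositions of $k$ with at least two positive parts;
\item[(iii)] minus the twisted term $p^r\,k!\,[x^{k-1}]g^{q+1}$.
\end{itemize}
At $k=1$ only (iii) contributes, giving $a_1=-p^r$, hence $v_0=r$. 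All valuations of the combinatorial factors will be computed with Legendre/Kummer, so each factor's valuation is a count of carries, that is, a difference of base-$p$ digit sums.

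\emph{Order of proof.} Assertion (4) is a formal consequence of $v_n\le p\,v_{n-1}$. Indeed, forming $N=n_1+\dots+n_t$ digitwise, each carry replaces $p$ units at place $n-1$ by one unit at place $n$. This changes $\Lambda_{r,e}$ by $v_n-pv_{n-1}\le0$. So within the induction we will first establish (3) up to level $n$ and then derive (4) for numbers below $p^{n+1}$. For (1), we bound every term in the recursion for $a_k$ by induction. A product $\prod a_{k_i}$ has valuation at least $\sum\Lambda(k_i)\ge\Lambda(k)$ by (4). The remaining multinomial-over-factorial factor $\frac{k!}{q}\binom{q}{\cdots}\prod 1/k_i!$ has nonnegative valuation. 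We will show this by writing it as (carries in $k=\sum k_i$) plus (carries in the multinomial in $q$) minus $e$, and noting that a genuinely nonlinear term has at least $e$ carries in the multinomial $\binom{q}{j_1,\dots}$. For term (i), $\vp\bigl(k!/(q(k/q)!)\bigr)\ge0$ combined with $\Lambda(k/q)\le\Lambda(k)$ does the job. For term (iii), the prefactor $p^r=p^{v_0}$ exactly compensates the extra factor $x$, which accounts for one unit digit.

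\emph{Leading terms.} For (3) with $k=p^n$, we isolate two candidate leading contributions.
\begin{itemize}[leftmargin=2em]
\item The $g(x^q)$ term gives $\alpha_{n,e}a_{p^{n-e}}$. Legendre gives $\vp(\alpha_{n,e})=\mu_e p^{n-e}-e=\Delta_{n,e}$, so this term has valuation $A_{n,e}$.
\item The term of $g^q$ with $p$ factors equal to $p^{n-1}$ gives $-\gamma_{n,e}a_{p^{n-1}}^p$. Since $\vp(\gamma_{n,e})=0$, this term has valuation $B_{n,e}$.
\end{itemize}
All other compositions of $p^n$ must be shown to be strictly larger than $\min(A,B)$. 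They either involve parts with extra carries or have $\Lambda$ of their parts strictly exceeding $pv_{n-1}$. We will obtain the latter from the strictness of the earlier inequalities $v_m<pv_{m-1}$ whenever the $A$-branch won, and from extra carry factors otherwise. Assertion (2) is handled in the same way for $p\mid k$ non-pure. We expand $k$ into its digit multiset, and the unique composition splitting $k$ into its digits $p^i$ (with multiplicities $k_i$) contributes $m_{r,e}(k)$ times a carry-free multinomial factor. That factor is a $p$-adic unit, and we need it to be $\equiv1$ modulo $p$, which we will check via Lucas. Every other term has either a positive carry defect or a strictly larger digit weight.

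\emph{Main obstacle.} We expect two difficulties. The first is the strict separation in (2) and (3): we must show that no non-digit composition ties at valuation exactly $\Lambda$. This is where the \emph{carry-free} analysis, i.e.\ the sum of two carry defects described in the introduction, must be done carefully. The second is the inequality $A_{n,e}\ne B_{n,e}$. For it we plan to solve the recursion explicitly along the branch word $(B^{e-1}A)^{\lceil r/e\rceil}B^\infty$. This gives closed forms for $v_n$ of the shape $p^{n}(\cdot)$ plus lower-order terms involving $\mu_e$ and $e$. We then compare $A-B$ through its $p$-adic residue: when $n>e$ one has $\Delta_{n,e}\equiv -e\pmod p$ while $B_{n,e}\equiv0\pmod p$, so it suffices to control $v_{n-e}\pmod p$. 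If $p\mid e$, we fall back on a direct size comparison, which we expect to work since $\Delta_{n,e}$ grows like $p^{n-e}\mu_e$.
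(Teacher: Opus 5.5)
There is a genuine gap. Your overall skeleton matches the paper: strong induction, (4) derived from $v_n\le pv_{n-1}$ by carrying, and the two candidate leading terms $\alpha_{n,e}a_{p^{n-e}}$ and $-\gamma_{n,e}a_{p^{n-1}}^p$ at pure powers. The step that fails is your plan for (2).

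\textbf{The plan for (2).} You claim that the composition of $k$ into its digits $p^i$ is the only term at level $\Lambda_{r,e}(k)$ with unit scalar, and that every other term has positive carry defect or strictly larger weight. This is false. Other carry-free ways of writing $M=k/p$ as a sum of smaller parts (proper vector partitions of the digit vector of $M$) also give $B$-terms with zero carry defect, weight exactly $\Lambda_{r,e}(k)$, and in general unit scalar. By (D) at smaller divisible indices, their monomials agree with $m_{r,e}(k)$ to leading order.

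Take $k=3p$ with $p\ge5$:
\begin{itemize}
\item $\{p,p,p\}$ has $B$-scalar $\binom{3p}{p,p,p}\frac{(q-1)(q-2)}{3!}\equiv 2$;
\item $\{2p,p\}$ has $B$-scalar $\binom{3p}{p}(q-1)\equiv-3$, and $a_{2p}a_p\equiv a_p^3\pmod{p^{3v_1+1}}$.
\end{itemize}
The digit term alone would give $a_{3p}\equiv-2a_p^3$, so your Lucas check cannot succeed. Only the total $2-3=-1$ yields $a_{3p}\equiv m_{r,e}(3p)$. The digit composition need not even have unit scalar: its scalar is $\equiv(-1)^{N-1}(N-1)!$ with $N$ the digit sum of $M$, which vanishes mod $p$ once $N>p$.

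What is missing is the following argument:
\begin{itemize}
\item show that every unit-scalar term in the $p$-divisible truncation is carry-free;
\item replace each $a_{pi}$ by $m_{r,e}(pi)$ inductively, so that all these terms share the monomial $m_{r,e}(k)$;
\item sum their scalar coefficients.
\end{itemize}
The paper evaluates that sum as $-1$ modulo $p$ via the identity $\log\bigl(1+\sum_{\beta\ne0}t^\beta/\beta!\bigr)=t_0+\cdots+t_s$. Its coefficient at a non-basis digit vector is $0$, and the excluded one-block term contributes $1$.

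\textbf{Errors in (1).} For the $A$-term your inequality is reversed. From $\vp(\alpha)\ge0$ and $\vp(a_{k/q})\ge\Lambda(k/q)$ you would need $\Lambda(k/q)\ge\Lambda(k)$, which fails; for example $\Lambda(1)=r<v_e=\Lambda(q)$. You need the exact valuation $\vp(\alpha)=\mu_eM-e$ with $M=k/q$, together with the shift inequality $\Lambda(qM)\le\mu_eM-e+\Lambda(M)$. That inequality comes from the branch bound $v_{i+e}\le\Delta_{i+e,e}+v_i$ in (3), and it is strict when $M$ is not a power of $p$. The strictness is also what removes the $A$-term from the leading level in (2).

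Separately, your claim that a genuinely nonlinear term has at least $e$ carries in the multinomial is false. $\binom{q}{q-p,p}$ has only $e-1$ carries, and it is exactly the leading $B$-branch term. Integrality of the $B$-scalars needs the coupling of the two carry defects: if the multiplicity addition has $t<e$ carries, all multiplicities are divisible by $p^{e-t}$, which forces at least $e-t$ carries in the weighted addition.

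\textbf{The no-tie claim.} The residue test cannot settle $A_{n,e}\ne B_{n,e}$ in general. At $n=e$ the difference is $\equiv r+1-e\pmod p$, which can vanish. What settles it is the explicit comparison $(p^e-1)(e(j-1)-r)+\mu_e-e$ at the levels $je$, using $0<\mu_e-e<p^e-1$.
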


The following theorem gives the explicit branch pattern and the radius.  Put
\[
        s_{r,e}:=\left\lceil\frac r e\right\rceil,
        \qquad
        \lambda_{r,e}:=p^{-e s_{r,e}}
        \left(r+\frac{p^{e s_{r,e}}-1}{p-1}-e\,s_{r,e}\right),
\]
and
\[
        \theta_{r,e}:=\frac1{p-1}-\lambda_{r,e}
        =p^{-e s_{r,e}}\left(\frac1{p-1}+e\,s_{r,e}-r\right).
\]
In particular, $\theta_{r,e}>0$, since $e\,s_{r,e}-r\ge0$.
For $n\ge 0$ define
\[
        N_{r,e}(n):=\min\left(\left\lfloor\frac n e\right\rfloor,s_{r,e}\right),
        \qquad
        \varepsilon_{r,e}(n):=(-1)^{1+eN_{r,e}(n)}\in\{\pm1\}.
\]
Here $N_{r,e}(n)$ is the number of $A$-branches encountered up to level $n$.

\begin{theorem}[Valuations and radius in degree $p^e$]\label{thm:radius-pe}
Fix $r\ge 1$ and $e\ge 2$.
\begin{enumerate}[label=\textup{(\alph*)},leftmargin=2.2em]
\item The pure-power branch word is
\[
        (B^{e-1}A)^{s_{r,e}}B^\infty.
\]
Equivalently, for $0\le j\le s_{r,e}$ and $0\le t\le e-1$,
\[
        v_{je+t}(r,e)=p^t\left(r+\frac{p^{j e}-1}{p-1}-e\,j\right),
\]
and for all $n\ge es_{r,e}$,
\[
        v_n(r,e)=\lambda_{r,e}p^n.
\]
\item For every $n\ge 0$,
\[
        p^{-v_n(r,e)}a_{p^n}(r,e)\equiv \varepsilon_{r,e}(n)\pmod p.
\]
Consequently, if $m=\sum_{i\ge 0}m_ip^i$, then
\[
        a_{pm}(r,e)=
        \left(\prod_{i\ge 0}\varepsilon_{r,e}(i+1)^{m_i}\right)
        p^{\sum_{i\ge 0}m_i v_{i+1}(r,e)}
        +\bigo\!\left(p^{\sum_{i\ge 0}m_i v_{i+1}(r,e)+1}\right).
\]
\item If $p\mid k$, then
\[
        \vp\bigl(a_k(r,e)\bigr)=\Lambda_{r,e}(k)=\lambda_{r,e}k+\bigo(1).
\]
\item The $p$-adic radius of convergence of $f_{r,e}$ is
\[
        \rho(f_{r,e})=p^{-\theta_{r,e}}
        =p^{-\left(p^{-e s_{r,e}}\left(\frac1{p-1}+e\,s_{r,e}-r\right)\right)}.
\]
For $e=2$, this is $p^{-\frac{p^{-r}}{p-1}}$.
\end{enumerate}
\end{theorem}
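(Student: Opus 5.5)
The plan is to derive Theorem~\ref{thm:radius-pe} entirely from Theorem~\ref{thm:higher-structure}, treating the latter as a black box. For part (a) I would solve the min-recursion $v_n=\min\{A_{n,e},B_{n,e}\}$ explicitly by induction on the block index $j$. The claim to propagate is
\[
v_{je+t}=p^t c_j,\qquad c_j:=r+\frac{p^{je}-1}{p-1}-ej,
\]
for $0\le j\le s_{r,e}$ and $0\le t\le e-1$.

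\textbf{Case $n=je+t$ with $1\le t\le e-1$.} Compare the two branches:
\[
B_{n,e}=p\,v_{n-1}=p^t c_j,\qquad A_{n,e}=\mu_e p^{n-e}-e+p^t c_{j-1}.
\]
Using $c_j-c_{j-1}=p^{(j-1)e}\mu_e-e$, one gets
\[
A-B=\mu_e p^{n-e}(1-p^{t-\dots})\text{-type terms}-e+p^te,
\]
and I expect the sign check to reduce to $p^t e-e>0$ against $\mu_e p^{(j-1)e}(p^t-p^t)=0$. So $A-B=e(p^t-1)>0$, and $B$ wins.

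\textbf{Case $t=0$, $j\ge1$.} Here
\[
A=\mu_e p^{(j-1)e}-e+c_{j-1}=c_j,\qquad B=p^{e}c_{j-1}.
\]
Then $B-A=(p^e-1)c_{j-1}-\mu_e p^{(j-1)e}+e$. Substituting the formula for $c_{j-1}$ gives
\[
B-A=(p^e-1)(r-e(j-1))-\mu_e+e.
\]
This equals $(p^e-1)(r-ej)+e(p^e-1)-\mu_e+e$. It is positive exactly when $j\le s_{r,e}$, because $e(p^e-1)+e-\mu_e>0$ and $r-ej>-e$ for $j\le\lceil r/e\rceil$. At $j=s_{r,e}+1$ we have $r-ej\le -e$, and then $B-A\le e-\mu_e<0$. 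That last inequality needs $\mu_e>e$, which holds since $p\ge3$.

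\textbf{Past $es_{r,e}$.} Once B wins at $t=0$ beyond $es_{r,e}$, I show inductively that B wins forever. The quantity $A_n-B_n$ satisfies a recursion with growing positive gap, since $v_n=p^{n-es}c_s$ grows like $p^n$ with the same rate as $\mu_e p^{n-e}$ but with a strictly larger coefficient. This gives the branch word and $v_n=\lambda_{r,e}p^n$.

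For part (b) I would track the leading unit along the branch word. On a B-step the unit is $-\gamma_{n,e}(\text{unit})^p$ times a power of $p$, and on an A-step it is $\alpha_{n,e}\cdot$unit. So I need two residue computations:
\[
\frac{p^{-\vp(\gamma)}\gamma_{n,e}}{\cdot}\equiv 1\pmod p,\qquad \frac{\alpha_{n,e}}{p^{\vp(\alpha_{n,e})}}\equiv \pm1\pmod p.
\]
The first is via Wilson-type evaluation of $(p^n)!/(p^{n-1}!)^p p^{p^{n-1}}$, which is $\equiv(-1)^{p^{n-1}}=-1$. Together with $\binom{p^e-1}{p-1}\equiv1$ this yields a sign flip $-(-1)=+1$ combined with $u^p\equiv u$. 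The second, for $\alpha_{n,e}=(p^n)!/(p^e(p^{n-e})!)$, uses $(p^n)!/(p^{n-1})!p^{p^{n-1}}\equiv -1$ iterated $e$ times. It gives $(-1)^e$ up to the $p^e$ normalization.

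Checking that these multiply to $\varepsilon_{r,e}(n)=(-1)^{1+eN}$ is a bookkeeping induction starting from $a_1=-a_0 p^r$-type initial data, where $v_0=r$ and the unit is $-1$. The consequence for $a_{pm}$ then follows from Theorem~\ref{thm:higher-structure}(2) when $m$ is not a power of $p$, and directly otherwise.

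Part (c) follows from (2) of Theorem~\ref{thm:higher-structure} together with (b), since $m_{r,e}(k)$ is then exactly a unit times $p^{\Lambda}$. The asymptotic $\Lambda_{r,e}(k)=\lambda k+\bigo(1)$ holds because $v_i=\lambda p^i$ for $i\ge es$, and only finitely many low digits contribute bounded errors.

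For part (d), the series is $\sum a_k x^{k+1}/k!$, so I need $\liminf_k(\vp(a_k)-\vp(k!))/k$. Using $\vp(k!)=(k-s_p(k))/(p-1)$ and (1), (4) gives the lower bound
\[
\vp(a_k)-\vp(k!)\ge (\lambda-\tfrac1{p-1})k-\bigo(\log k).
\]
The reverse comes from $k=p^n$, where equality holds by (a) and (b). Hence the radius is $p^{-\theta}$. I expect the main obstacle to be the sign and unit bookkeeping in (b), specifically the exact residues of $\alpha_{n,e}$ and $\gamma_{n,e}$.
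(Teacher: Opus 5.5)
Your proposal is correct and follows essentially the paper's route: you solve the lag-$e$ min-recursion from Theorem~\ref{thm:higher-structure} explicitly, as in Lemma~\ref{lem:abstract-branch-pe}, and track the normalized unit using $-\gamma_{n,e}\equiv 1$ and $p^{-\Delta_{n,e}}\alpha_{n,e}\equiv(-1)^e$. You then read off (c) from the leading-monomial clause, and obtain the radius from the global lower bound plus Legendre, with the reverse inequality along $k=p^n$.

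One small slip: $(p^n)!/\bigl((p^{n-1})!\bigr)^p$ has valuation exactly $1$, so the unit to evaluate in $\gamma_{n,e}$ is $(p^n)!/\bigl(p\,((p^{n-1})!)^p\bigr)$ rather than a quotient by $p^{p^{n-1}}$. Its residue is nonetheless $-1$, as you claim.
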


\begin{theorem}[Tail-stable extension]\label{thm:tail-main}
Let $\vartheta_h\in\Qp$ for all $h\ge1$, and consider the formal germ
\[
        \widetilde\varphi_{r,e}(x)=x^q+qp^r x^{q+1}+q\sum_{h\ge1}\vartheta_hx^{q+1+h},
        \qquad q=p^e,
\]
with inverse B\"ottcher coordinate
\[
        \widetilde f_{r,e}(x)=x\sum_{k\ge0}\frac{\widetilde a_k(r,e)}{k!}x^k,
        \qquad
        \widetilde\varphi_{r,e}\bigl(\widetilde f_{r,e}(x)\bigr)=\widetilde f_{r,e}(x^q).
\]
Since the coefficient recursion at a fixed degree involves only finitely many $h$, all statements below are understood coefficient-wise.
\begin{enumerate}[label=\textup{(\alph*)},leftmargin=2.2em]
\item If $r=0$ and $\vartheta_h\in p\Zp$ for all $h\ge1$, then
\[
        \widetilde a_k(0,e)\equiv a_k(0,e)\pmod p
        \qquad (k\ge0).
\]
In particular, the conclusions of Theorems~\ref{thm:digit-sum} and~\ref{thm:SS-special} remain valid for $\widetilde a_k(0,e)$.
\item Assume $r\ge1$ and
\[
        \vp(\vartheta_h)\ge \Lambda_{r,e}(h+1)+1
        \qquad (h\ge1).
\]
For $k=\sum_{i\ge0}k_ip^i$, set
\[
        \widetilde m_{r,e}(k):=\prod_{i\ge0}\widetilde a_{p^i}(r,e)^{k_i}.
\]
Here $\Lambda_{r,e}$ is the clean-family digit weight from Theorem~\ref{thm:higher-structure}.  Then the analogues of Theorems~\ref{thm:higher-structure} and~\ref{thm:radius-pe} hold for the perturbed coefficients $\widetilde a_k(r,e)$, with the same digit weight $\Lambda_{r,e}$ and with $\widetilde m_{r,e}(k)$ in place of $m_{r,e}(k)$.  In particular, the pure-power branch word, the normalized pure-power units, the valuation asymptotic, and the radius $p^{-\theta_{r,e}}$ are unchanged.
\end{enumerate}
\end{theorem}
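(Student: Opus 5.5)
The proof compares the perturbed coefficient recursion with the clean one, degree by degree. Write $\widetilde\varphi_{r,e}=\varphi_{r,e}+q\,T(x)$ with $T(x)=\sum_{h\ge1}\vartheta_hx^{q+1+h}$. Expanding $\widetilde\varphi_{r,e}(\widetilde f_{r,e}(x))=\widetilde f_{r,e}(x^q)$ and reading off the coefficient of $x^{q+k}$ (after multiplying by the appropriate factorials), I expect $\widetilde a_k$ to satisfy exactly the clean recursion used to prove Theorems~\ref{thm:digit-sum}--\ref{thm:radius-pe}, plus an extra term $E_k$. This term is a finite sum over $h\ge1$ of $\vartheta_h$ times coefficients of $\widetilde f_{r,e}(x)^{q+1+h}$ in degree $q+k$, divided by the same normalizing factor that isolates $a_k$. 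The first step is to make this precise. I would write $E_k$ as a sum of terms $\vartheta_h\cdot c\cdot\prod \widetilde a_{k_j}$ with $\sum k_j=k-h$, where the combinatorial factor $c$ and the factorial prefactors are identical in shape to those multiplying the $qp^r x^{q+1}$ term in the clean recursion (that term is just the $h=0$ case, with $\vartheta_0=p^r$). This gives a uniform description in which the clean recursion is the $h=0$ term of a sum over $h\ge0$.

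For part (a), I would induct on $k$. When $r=0$ and $\vartheta_h\in p\Zp$, the $h\ge1$ terms have exactly the same integrality structure as the $h=0$ term (whose coefficient $p^0=1$ is a unit), multiplied by an extra factor of $p$. Whatever argument shows that the clean recursion is $p$-integral therefore also shows $E_k\equiv0\pmod p$. By induction $\widetilde a_j\equiv a_j$ for $j<k$, so the recursion gives $\widetilde a_k\equiv a_k\pmod p$. Theorems~\ref{thm:digit-sum} and~\ref{thm:SS-special} then transfer immediately.

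For part (b), I would run the proof of Theorem~\ref{thm:higher-structure} again by strong induction on $k$, carrying along the extra term. The key estimate is
\[
\vp(E_k)\ge \Lambda_{r,e}(k)+1 .
\]
To get it, take a term indexed by $h$ and parts $k_j$. By the inductive lower bound (1) for $\widetilde a_{k_j}$ and by subadditivity (4), which is a statement about the clean $\Lambda_{r,e}$ alone and needs no re-proof, the term has valuation at least
\[
\vp(\vartheta_h)+\Lambda_{r,e}(k-h)+(\text{prefactor}).
\]
Using the hypothesis, this is at least $\Lambda_{r,e}(h+1)+1+\Lambda_{r,e}(k-h)+(\text{prefactor})$. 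The clean $h=0$ term has the same shape with $\vp(p^r)=r=\Lambda_{r,e}(1)$ in place of $\Lambda_{r,e}(h+1)$. So the perturbation behaves like the clean term with the index shifted by $h$, plus one extra power of $p$. Subadditivity, $\Lambda(k)\le \Lambda(h+1)+\Lambda(k-h-1)$, combined with the same prefactor bookkeeping that gives (1) in the clean case, then yields the bound.

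Once this is in place, every assertion is a statement modulo $p^{\Lambda(k)+1}$ or $p^{v_n+1}$, so the extra term is invisible. The lower bound (1) and the leading monomial statement (2) hold with $\widetilde m_{r,e}$, and the pure-power recursion (3) holds with the same $A$/$B$ branches and the same unit leading terms. Since $A_{n,e}\ne B_{n,e}$ is a statement about the clean $v_i$, and the $\widetilde v_i$ equal the $v_i$ inductively, the branch word, the units $\varepsilon_{r,e}$, the asymptotic, and the radius of Theorem~\ref{thm:radius-pe} follow verbatim. For the radius, the upper bound on $\vp(\widetilde a_k)$ along $k=p^n$ and the lower bound (1) give the same limits.

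The main obstacle is the prefactor bookkeeping. The clean proof's division by $k!$ and the carry-defect analysis generate losses, and I must check that the loss for the $x^{q+1+h}$ term never exceeds that of the $x^{q+1}$ term by more than $\Lambda(h+1)-\Lambda(1)$ plus what subadditivity absorbs. I would try first to treat $x^hT$-contributions as if $\widetilde f$ were replaced by $x^h\widetilde f$ and reuse the clean carry-defect lemma with shifted index. If that fails, I would fall back on the crude estimate $\vp(\text{multinomial}/k!)\ge -\vp(k!)$, and check that the margin $+1$, together with $\Lambda(h+1)\ge \Lambda(1)$ for $h\ge1$, suffices.
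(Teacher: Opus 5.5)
Your overall plan is the paper's. You isolate the tail contribution in the recursion and show it is $\equiv0\pmod p$ when $r=0$ and $O(p^{\Lambda_{r,e}(k)+1})$ when $r\ge1$. For this you use the inductive lower bound for $\widetilde a_n$ ($n<k$) and the clean subadditivity, which, as you say, needs no re-proof. Then you observe that the tail is invisible in every step of the clean induction. The gap is the one ingredient specific to the tail: the $p$-adic size of its scalar coefficients. You leave this open and call it the main obstacle, but both (a) and (b) depend on it.

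Concretely, read off $x^{q+k}$ and multiply by $k!/q$. The tail contributes $\vartheta_hT_{k,h}$ with $T_{k,h}=k!\,[x^{m}]\bigl(\sum_n\widetilde a_nx^n/n!\bigr)^{Q}$, where $Q=q+1+h$ and $m=k-h-1$. The total index is $k-h-1$, not $k-h$: your own identification of $h=0$ with the $C$-term, of total index $k-1$, forces this. With $k-h$, subadditivity would only produce $\Lambda_{r,e}(k+1)$, which is not comparable to $\Lambda_{r,e}(k)$. The $q$ in front of $\vartheta_h$ cancels the $1/q$, and $k!$ multiplies rather than divides, so nothing is lost. Each scalar coefficient factors as
\[
k!\binom{Q}{\eta_0,\eta_1,\ldots}\prod_{n\ge1}\frac{1}{(n!)^{\eta_n}}=\frac{k!}{m!}\cdot\frac{Q!}{\eta_0!}\cdot\frac{m!}{\prod_{n\ge1}\eta_n!\,(n!)^{\eta_n}},
\]
a product of integers; the last factor counts set partitions of an $m$-element set. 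This is Lemma~\ref{lem:tail-structure} in the paper. A Legendre computation in the spirit of your carry-defect idea also works. It gives $(p-1)\vp=(h+1)-\Sp(k)+\bigl(\sum_n\Sp(\eta_n)-\Sp(Q)\bigr)+\sum_n\eta_n\Sp(n)$, which is $\ge0$ because $\sum_n\eta_n\Sp(n)\ge\Sp(m)\ge\Sp(k)-\Sp(h+1)$ and $\Sp(h+1)\le h+1$. The clean $C$-term argument does not cover this verbatim, since it uses $\Sp(q+1)=2$. So your appeal in (a) to ``whatever argument shows the clean recursion is $p$-integral'' is not by itself a proof.

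Your fallback estimate would fail. A loss of size $\vp(k!)$, which grows like $k/(p-1)$, cannot be absorbed. The available slack $\Lambda_{r,e}(h+1)+\Lambda_{r,e}(k-h-1)+1-\Lambda_{r,e}(k)$ equals exactly $1$ whenever $h+1$ and $k-h-1$ add without carry, for instance $h=1$, $k=p^n+2$, with $k$ arbitrarily large. Once integrality is in hand, you get $\vp(\vartheta_hT_{k,h})\ge\Lambda_{r,e}(h+1)+1+\Lambda_{r,e}(k-h-1)\ge\Lambda_{r,e}(k)+1$, and $\vartheta_hT_{k,h}\equiv0\pmod p$ when $r=0$. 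The rest of your argument then goes through as in the paper.
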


\begin{remark}[The degree-$p^2$ specialization]\label{rem:e2-ss-conjectures}
When $e=2$, the family \eqref{eq:main-family} becomes
\[
        \varphi_{r,2}(x)=x^{p^2}+p^{r+2}x^{p^2+1},
\]
which is exactly the wild degree-$p^2$ family studied by Salerno--Silverman.  In this specialization, Theorem~\ref{thm:SS-special} proves the special-fiber residue-class prediction of Salerno--Silverman, while Theorem~\ref{thm:radius-pe} proves the higher-fiber valuation and radius prediction in the same family.  The stronger results Theorems~\ref{thm:digit-sum} and~\ref{thm:higher-structure} may be viewed as coefficient-level refinements of those two predictions.
\end{remark}

Although Theorem~\ref{thm:tail-main} is far from the full generality of \cite{FuNie}, it shows that the coefficient-level structure developed here is not confined to the exact one-parameter model \eqref{eq:main-family}: the special-fiber digit-sum law and the higher-fiber leading-term calculus persist under controlled higher-order perturbations.

Throughout the paper, if $X,Y\in\Qp$ and $N\in\mathbb Z$, we write
\[
        X=Y+\bigo(p^N)
\]
to mean $\vp(X-Y)\ge N$.  All congruences modulo powers of $p$ are applied only after the relevant quantities have been shown to be $p$-integral.

The paper is arranged as follows.  Sections~\ref{sec:recursion}--\ref{sec:proof-main-special} deal with the special fiber.  We prove the $A$--$B$--$C$ recursion, the carry-defect integrality of the $B$-term, the residue class $a=0$, and the vector-partition cumulant collapse for $a\ne0$.  Sections~\ref{sec:higher-recursion}--\ref{sec:higher-induction} deal with the higher fibers.  We first set up the generalized recursion and the filtered cumulant argument, and then we prove Theorem~\ref{thm:higher-structure}, the branch word, and Theorem~\ref{thm:radius-pe}.  Section~\ref{sec:tail-stability} proves the tail-stable extension stated in Theorem~\ref{thm:tail-main}.

We restrict throughout to odd primes.  The case $p=2$ brings in additional low-characteristic coincidences, and for this reason we do not discuss it here.

\section{The recursion and structural lemmas}\label{sec:recursion}
In this section we work in the special fiber $r=0$.  Thus
\[
        \varphi(x)=x^q+qx^{q+1},\qquad q=p^e,
\]
and we write $a_k=a_k(0,e)$.  Since $\varphi(x)$ is of the form $x^m+mx^{m+1}R[\![x]\!]$ with $m=q$ and $R=\Zp$, the coefficient-integrality theorem of Salerno--Silverman, namely \cite[Theorem~3(b)]{SalernoSilverman}, gives $a_k\in\Zp$ for all $k$.  Therefore all reductions modulo $p$ in Sections~\ref{sec:recursion}--\ref{sec:proof-main-special} are legitimate.

The coefficient comparison gives
\begin{equation}\label{eq:ABC}
        a_k=A_k[x^k]-B_k[x^k]-C_k[x^k],
\end{equation}
where
\[
        A_k:=\frac{k!}{q}\sum_{\ell=0}^{k-1}\frac{a_\ell}{\ell!}x^{q\ell},\qquad
        B_k:=\frac{k!}{q}\left(\sum_{\ell=0}^{k-1}\frac{a_\ell}{\ell!}x^\ell\right)^q,
\]
\[
        C_k:=k!x\left(\sum_{\ell=0}^{k-1}\frac{a_\ell}{\ell!}x^\ell\right)^{q+1}.
\]
Here $P[x^n]$ denotes the coefficient of $x^n$ in $P(x)$.

For $N=\sum_iN_ip^i$, $0\le N_i\le p-1$, put
\[
        \Sp(N):=\sum_iN_i,
        \qquad
        \Tp(N):=\frac{N!}{p^{\vp(N!)}},
\]
so that $\Tp(N)$ is the prime-to-$p$ part of $N!$.  We repeatedly use Legendre's formula
\begin{equation}\label{eq:legendre}
        \vp(N!)=\frac{N-\Sp(N)}{p-1}
\end{equation}
and digit-sum subadditivity
\begin{equation}\label{eq:digit-subadditivity}
        \Sp(u+v)\le \Sp(u)+\Sp(v).
\end{equation}

\begin{proposition}[The $A$-term vanishes modulo $p$]\label{prop:A-zero}
For every $k\ge 1$,
\[
        A_k[x^k]\equiv 0\pmod p.
\]
\end{proposition}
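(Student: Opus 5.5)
The coefficient $A_k[x^k]$ is nonzero only when $q\mid k$. Indeed, $A_k$ is a series in $x^q$, so for $q\nmid k$ there is nothing to prove. Write $k=q\ell$ with $1\le\ell\le k-1$; since $\ell<k$ this term lies in the truncated sum. Then
\[
A_k[x^k]=\frac{(q\ell)!}{q\,\ell!}\,a_\ell .
\]
By the Salerno--Silverman integrality recalled above, $a_\ell\in\Zp$. It therefore suffices to show that
\[
\vp\!\left(\frac{(q\ell)!}{q\,\ell!}\right)\ge 1 .
\]

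I will compute this valuation with Legendre's formula \eqref{eq:legendre}. Multiplication by $q=p^e$ only shifts digits, so $\Sp(q\ell)=\Sp(\ell)$, and hence
\[
\vp((q\ell)!)-\vp(\ell!)=\frac{q\ell-\ell}{p-1}=\ell\,\mu_e .
\]
The valuation in question is therefore $\ell\mu_e-e$, where $\mu_e=1+p+\cdots+p^{e-1}$. Since $\ell\ge1$ and $p\ge3$, we have $\mu_e\ge 1+p(e-1)\ge 1+3(e-1)=3e-2$. This gives $\ell\mu_e-e\ge 2e-2\ge 2$ for $e\ge2$, which is at least $1$. Hence $A_k[x^k]\in p\Zp$, and this settles the case $q\mid k$.

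There is no real obstacle here. The only points to watch are the observations that $\ell\ge1$ and that $\ell\le k-1$, so the relevant term is really present in the truncated sum (this holds because $k\ge1$ and $q>1$). The integrality of $a_\ell$, which is imported from \cite[Theorem~3(b)]{SalernoSilverman}, is what allows us to pass from the valuation of the scalar factor to the congruence.
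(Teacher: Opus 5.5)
Your proof is correct and follows the paper's argument: you reduce to $k=q\ell$, use $\Sp(q\ell)=\Sp(\ell)$ in Legendre's formula to get valuation $\mu_e\ell-e$, and bound this below by $1$. The only difference is the estimate on $\mu_e$ (you use $\mu_e\ge 3e-2$, the paper uses $\mu_e\ge e+1$), which is inessential; your remarks that $\ell<k$ and that $a_\ell\in\Zp$ are points the paper leaves implicit or records at the start of the section.
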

\begin{proof}
If $q\nmid k$, then $A_k[x^k]=0$.  If $k=qn$, then
\[
        A_k[x^k]=\frac{(qn)!}{q\,n!}a_n.
\]
Since $\Sp(qn)=\Sp(n)$, Legendre's formula gives
\[
\begin{aligned}
\vp\left(\frac{(qn)!}{q\,n!}\right)
&=\frac{qn-\Sp(qn)}{p-1}-e-\frac{n-\Sp(n)}{p-1}  \\
&=\frac{(q-1)n}{p-1}-e
=\mu_en-e.
\end{aligned}
\]
Here $\mu_e=(q-1)/(p-1)\ge e+1$ for $p\ge3$ and $e\ge2$, so the last number is at least $1$.  Hence the $A$-term is divisible by $p$.
\end{proof}

Let $a^\eta=\prod_{n\ge0}a_n^{\eta_n}$ be a monomial occurring in $B_k[x^k]$.  Then
\[
        \sum_n\eta_n=q,
        \qquad
        \sum_n n\eta_n=k,
\]
and its scalar coefficient is
\[
        \gamma_B(\eta)=\frac{k!}{q}\binom{q}{\eta_0,\eta_1,\ldots}\prod_{n\ge0}\frac1{(n!)^{\eta_n}}.
\]
Legendre's formula gives
\begin{equation}\label{eq:B-valuation-general}
(p-1)\vp(\gamma_B(\eta))
=-\Sp(k)-1-e(p-1)+\sum_n\Sp(\eta_n)+\sum_n\eta_n\Sp(n).
\end{equation}
Equivalently, with
\[
        c(\eta):=\frac{\sum_n\eta_n\Sp(n)-\Sp(k)}{p-1},
        \qquad
        d(\eta):=\frac{\sum_n\Sp(\eta_n)-1}{p-1},
\]
one has
\begin{equation}\label{eq:B-cd}
        \vp(\gamma_B(\eta))=c(\eta)+d(\eta)-e.
\end{equation}
The integers $c(\eta)$ and $d(\eta)$ are the carry defects in the weighted addition $\sum n\eta_n=k$ and in the multiplicity addition $\sum\eta_n=q$.

\begin{lemma}[Carry depth for multiplicities]\label{lem:carry-depth}
Let $\eta_n\ge0$ and $\sum_n\eta_n=p^e$.  If $d(\eta)=t<e$, then every $\eta_n$ is divisible by $p^{e-t}$.  If $t\ge e$, the assertion is vacuous.
\end{lemma}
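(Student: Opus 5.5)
We prove the contrapositive in a sharper form: for each $j\le e$, if some $\eta_n$ is not divisible by $p^{j}$, then $d(\eta)\ge e-j+1$. Taking $j=e-t$ then gives the lemma. The natural tool is the carry interpretation of $d(\eta)$. By Legendre's formula \eqref{eq:legendre},
\[
(p-1)\,d(\eta)=\sum_n\Sp(\eta_n)-\Sp(p^e)=(p-1)\Bigl(\vp\bigl((p^e)!\bigr)-\sum_n\vp(\eta_n!)\Bigr),
\]
so $d(\eta)=\vp\binom{p^e}{\eta_0,\eta_1,\ldots}$ is the total number of carries when the $\eta_n$ are added in base $p$ (Kummer's theorem for multinomials). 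This identification is the only input needed beyond elementary digit bookkeeping.

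Next we count carries position by position. Let $j$ be the smallest index such that some $\eta_n$ has a nonzero digit in position $j$. Every $\eta_n$ is then divisible by $p^{j}$, and we may assume $j<e$ (otherwise every $\eta_n$ is divisible by $p^e$ and there is nothing to prove). The sum $p^e$ has digit $0$ in every position $i$ with $j\le i\le e-1$, and digit $1$ in position $e$. Column $j$ receives no incoming carry and has a positive digit sum, yet the output digit there is $0$. So the column sum is a positive multiple of $p$, and a carry of at least $1$ goes into column $j+1$. Inductively, each column $i$ with $j<i\le e-1$ receives a positive carry and again outputs digit $0$, so it also passes a positive carry to column $i+1$. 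Each of the $e-j$ positions $j,\dots,e-1$ therefore contributes at least one to the total carry count, giving $d(\eta)\ge e-j$.

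If $d(\eta)=t<e$, this gives $e-j\le t$, so $j\ge e-t$. Since every $\eta_n$ is divisible by $p^{j}$, it is divisible by $p^{e-t}$. When $t\ge e$ the exponent $e-t\le 0$ and the claim is empty, which is the vacuous case in the statement.

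The only delicate point is the carry identification. One must check that $\sum_n\Sp(\eta_n)-\Sp(\sum_n\eta_n)$ equals $p-1$ times the sum of the multi-addend carries. This is standard: each unit of carry out of a column lowers that column's digit sum by $p$ and raises the next column's by $1$, a net loss of $p-1$. No earlier result of the paper is needed beyond \eqref{eq:legendre}.
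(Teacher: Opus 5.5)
Your proof is correct and is essentially the paper's argument: the paper first divides out $p^{s_0}$ with $s_0=\min_{\eta_n>0}\vp(\eta_n)$ and then propagates the forced carry from the units column up through level $e-s_0-1$, which is your column-by-column propagation starting at the lowest nonzero digit position $j=s_0$. Both proofs rest on the same two ingredients. One is the identification, via Legendre/Kummer, of $d(\eta)$ with the total carry count. The other is the observation that a column with positive total and output digit $0$ must pass a carry upward. You spell out the second explicitly, while the paper only asserts it.
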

\begin{proof}
If $t\ge e$, there is nothing to prove.  Thus assume $t<e$.
Let
\[
        s_0:=\min_{\eta_n>0}\vp(\eta_n),
\]
and write $\eta_n=p^{s_0}u_n$ with $u_n\in\mathbb{Z}_{\ge0}$.  Then at least one $u_n$ is not divisible by $p$, and $\sum_nu_n=p^{e-s_0}$.  The addition of the $u_n$ to obtain $p^{e-s_0}$ has at least one nonzero units digit among the summands; hence it must carry from the units place.  Since the final number has all lower $e-s_0$ digits equal to zero, carries must propagate through levels $0,1,\ldots,e-s_0-1$.  Thus the number of carries is at least $e-s_0$.  But this number of carries is
\[
        \frac{\sum_n\Sp(u_n)-1}{p-1}
        =\frac{\sum_n\Sp(\eta_n)-1}{p-1}=d(\eta)=t.
\]
Therefore $s_0\ge e-t$.
\end{proof}

\begin{lemma}[Global $B$-coefficient integrality]\label{lem:B-integrality}
For every $k\ge1$ and every monomial coefficient $\gamma_B(\eta)$ occurring in $B_k[x^k]$,
\[
        c(\eta)+d(\eta)\ge e.
\]
Equivalently, $\gamma_B(\eta)\in\Zp$.
\end{lemma}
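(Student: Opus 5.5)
The approach is to split according to the size of the multiplicity carry defect $d(\eta)$, and to use Lemma~\ref{lem:carry-depth} to convert a small $d(\eta)$ into a large weighted carry defect $c(\eta)$. Both defects are nonnegative integers. For $d(\eta)$, digit-sum subadditivity \eqref{eq:digit-subadditivity} gives $\sum_n\Sp(\eta_n)\ge \Sp(q)=1$, and $d(\eta)$ counts carries, so it is an integer. For $c(\eta)$, subadditivity gives $\sum_n\eta_n\Sp(n)\ge \Sp\bigl(\sum_n n\eta_n\bigr)=\Sp(k)$. Once $c(\eta)+d(\eta)\ge e$ is shown, the equivalence with $\gamma_B(\eta)\in\Zp$ follows immediately from \eqref{eq:B-cd}.

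Put $t:=d(\eta)$. If $t\ge e$, then $c(\eta)\ge0$ already gives $c(\eta)+d(\eta)\ge e$. Now suppose $t<e$. By Lemma~\ref{lem:carry-depth}, every $\eta_n$ is divisible by $p^{e-t}$, so we write $\eta_n=p^{e-t}u_n$. Then
\[
k=p^{e-t}\sum_n n u_n,\qquad \Sp(k)=\Sp\Bigl(\sum_n nu_n\Bigr)\le \sum_n u_n\Sp(n),
\]
where the equality holds because multiplying by a power of $p$ does not change the digit sum, and the inequality is again subadditivity. Setting $W:=\sum_n u_n\Sp(n)$, this yields
\[
(p-1)\,c(\eta)=p^{e-t}W-\Sp(k)\ge (p^{e-t}-1)W .
\]

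The only remaining point is that $W\ge1$. Since $k\ge1$ and $\sum_n n\eta_n=k$, some $n\ge1$ has $\eta_n>0$, hence $u_n>0$, and $\Sp(n)\ge1$ for that $n$. Therefore
\[
c(\eta)\ge \frac{p^{e-t}-1}{p-1}=1+p+\cdots+p^{e-t-1}\ge e-t,
\]
and so $c(\eta)+d(\eta)\ge (e-t)+t=e$.

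I expect no serious obstacle. The substantive input is Lemma~\ref{lem:carry-depth}, and the only care needed is to justify $W\ge1$, which uses $k\ge1$, and to note that dividing $k$ by a power of $p$ preserves its digit sum.
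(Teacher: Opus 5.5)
Your proposal is correct and follows the paper's proof essentially step for step. You use Lemma~\ref{lem:carry-depth} to write $\eta_n=p^{e-t}u_n$, bound $\Sp(k)=\Sp\bigl(\sum_n nu_n\bigr)\le W$, use $W\ge1$ from $k\ge1$, and conclude $c(\eta)\ge (p^{e-t}-1)/(p-1)\ge e-t$. You also make explicit the nonnegativity of $c(\eta)$ needed in the case $t\ge e$, which the paper leaves implicit.
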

\begin{proof}
Put $t=d(\eta)$ and $s=e-t$.  If $s\le0$, there is nothing to prove.  By Lemma~\ref{lem:carry-depth}, $\eta_n=p^s u_n$ for all $n$.  Let
\[
        M:=\sum_nnu_n,
        \qquad
        A:=\sum_nu_n\Sp(n).
\]
Then $k=p^sM$ and $\Sp(k)=\Sp(M)$.  Digit-sum subadditivity gives $\Sp(M)\le A$, and since $k\ge1$ we have $A\ge1$.  Hence
\[
\begin{aligned}
        c(\eta)
        &=\frac{p^sA-\Sp(M)}{p-1}                         \\
        &\ge \frac{(p^s-1)A}{p-1}
        \ge \frac{p^s-1}{p-1}
        \ge s=e-t.
\end{aligned}
\]
Thus $c(\eta)+d(\eta)\ge e$.
\end{proof}

\begin{proposition}[Unit $B$-terms on divisible classes]\label{prop:B-unit-divisible}
Assume $p\mid k$ and let a monomial in $B_k[x^k]$ have $p$-adic unit scalar coefficient.  If some occurring positive index is not divisible by $p$, then necessarily $k=p$ and the monomial is
\[
        a_0^{q-p}a_1^p.
\]
Consequently, if $p\mid k$ and $k>p$, every unit-coefficient monomial in $B_k[x^k]$ uses only positive indices divisible by $p$.
\end{proposition}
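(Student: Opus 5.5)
The plan is to turn ``unit scalar coefficient'' into an equality in the carry-defect decomposition \eqref{eq:B-cd}, and then to analyse the equality cases in the proofs of Lemmas~\ref{lem:carry-depth} and~\ref{lem:B-integrality}.

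\textbf{Setup.} Assume $\gamma_B(\eta)$ is a $p$-adic unit. By \eqref{eq:B-cd} this means exactly
\[
c(\eta)+d(\eta)=e.
\]
Both $c(\eta)$ and $d(\eta)$ are nonnegative integers, since they count carries. Put $t=d(\eta)$ and $s=e-t$. I split into the cases $s\le 0$ and $s\ge 1$.

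\textbf{Case $s\le0$.} Here $d(\eta)\ge e$ forces $c(\eta)\le 0$, hence $c(\eta)=0$.
\begin{itemize}
\item View $\sum_n n\eta_n=k$ as an addition of $\eta_n$ copies of each $n$. Then $c(\eta)=0$ says this addition has no carries.
\item With no carries, the units digits of the summands add up exactly to the units digit of $k$, and that digit is $0$ because $p\mid k$.
\item By hypothesis some positive index $n$ with $p\nmid n$ has $\eta_n>0$, so it contributes a nonzero units digit. This is a contradiction, so this case cannot occur.
\end{itemize}

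\textbf{Case $s\ge1$.} By Lemma~\ref{lem:carry-depth}, $\eta_n=p^su_n$ for all $n$. Rerunning the chain in the proof of Lemma~\ref{lem:B-integrality} gives
\[
s=e-t=c(\eta)=\frac{p^sA-\Sp(M)}{p-1}\ \ge\ \frac{(p^s-1)A}{p-1}\ \ge\ \frac{p^s-1}{p-1}\ \ge\ s,
\]
where $M=\sum_n nu_n$ and $A=\sum_n u_n\Sp(n)$. So every inequality in the chain must be an equality.
\begin{itemize}
\item Since $p\ge3$, the last equality $\frac{p^s-1}{p-1}=s$ forces $s=1$.
\item The remaining equalities force $A=1$ and $\Sp(M)=A=1$.
\item $A=\sum_n u_n\Sp(n)=1$ means exactly one positive index $n$ has $u_n>0$, and for it $u_n=1$ and $\Sp(n)=1$. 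So $n$ is a power of $p$ and $\eta_n=p$.
\item The hypothesis that some positive index is prime to $p$ then forces $n=1$.
\item Hence $\eta_1=p$, $\eta_0=q-p$, and $k=\sum_n n\eta_n=p$. The monomial is $a_0^{q-p}a_1^p$.
\end{itemize}

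\textbf{Consequence.} The final sentence of the proposition is the contrapositive: if $p\mid k$ and $k>p$, no unit-coefficient monomial can involve a positive index prime to $p$.

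\textbf{Main obstacle.} The step needing the most care is the no-carry argument in the case $s\le 0$. One must check that $c(\eta)=0$ is genuinely equivalent to carry-free addition of the multiset of summands. This is the standard fact that $\Sp\bigl(\sum x_i\bigr)=\sum\Sp(x_i)$ if and only if the addition has no carries, applied to this multiset.
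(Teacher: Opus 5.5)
Your proposal is correct and follows the paper's proof essentially step for step. The shared steps are: the unit condition $c(\eta)+d(\eta)=e$; the carry-free units-digit contradiction when $d(\eta)=e$; and otherwise Lemma~\ref{lem:carry-depth} with the bound $(p^s-1)A\le s(p-1)$ (your equality chain), forcing $s=1$, $A=\Sp(M)=1$, and hence the single index $1$ with $\eta_1=p$, $\eta_0=q-p$.
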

\begin{proof}
The unit condition is $c(\eta)+d(\eta)=e$.  Suppose that some occurring positive index is not divisible by $p$.  If $d(\eta)=e$, then $c(\eta)=0$, so the addition $\sum n\eta_n=k$ is carry-free.  Since $p\mid k$, its units digit is zero, and carry-freeness forces every occurring positive index to have units digit zero, a contradiction.  Hence $d(\eta)<e$.

Put $s=e-d(\eta)\ge1$.  By Lemma~\ref{lem:carry-depth}, $\eta_n=p^s u_n$.  Let $M=\sum_nnu_n$ and $A=\sum_nu_n\Sp(n)$.  Since $c(\eta)=s$, we have
\[
        p^sA-\Sp(M)=s(p-1).
\]
As before, $\Sp(M)\le A$, and hence
\[
        (p^s-1)A\le s(p-1).
\]
For $s\ge2$ this is impossible, because $p^s-1>s(p-1)$ for odd $p$.  Therefore $s=1$, and then the inequality forces $A=1$ and $\Sp(M)=1$.  Thus exactly one positive index occurs after division by $p$, with multiplicity one and digit sum one.  Since we assumed an occurring positive index not divisible by $p$, this index must be $1$.  Hence $k=p$ and the corresponding multiplicities are $\eta_1=p$, $\eta_0=q-p$, as claimed.
\end{proof}

\begin{proposition}[A global $C$-term lemma]\label{prop:C-global}
For every $k\ge1$, every monomial coefficient of $C_k[x^k]$ is $p$-integral.  Moreover:
\begin{enumerate}[label=\textup{(\roman*)},leftmargin=2.2em]
\item if $p\mid k$, then every monomial coefficient is divisible by $p$, so $C_k[x^k]\equiv0\pmod p$;
\item if $k\equiv a\pmod p$ with $1\le a\le p-1$, then the unique surviving term modulo $p$ is $a\,a_{k-1}$, i.e.
\[
        C_k[x^k]\equiv a\,a_{k-1}\pmod p.
\]
\end{enumerate}
\end{proposition}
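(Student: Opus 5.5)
The plan is to factor $k!=k\cdot(k-1)!$ and redo the Legendre bookkeeping of \eqref{eq:B-cd} for the $C$-term, so that the valuation of each monomial coefficient of $C_k[x^k]$ splits into three visibly nonnegative pieces.

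\emph{Valuation formula.} A monomial $a^\eta=\prod_n a_n^{\eta_n}$ occurring in $C_k[x^k]$ satisfies
\[
\sum_n\eta_n=q+1,\qquad \sum_n n\eta_n=k-1,
\]
because of the extra factor $x$. Its scalar coefficient is
\[
\gamma_C(\eta)=k!\binom{q+1}{\eta_0,\eta_1,\ldots}\prod_n\frac{1}{(n!)^{\eta_n}}.
\]
Apply Legendre's formula \eqref{eq:legendre} to each factor, using $\Sp(q+1)=2$ and $k!=k\cdot(k-1)!$. This should give
\[
\vp(\gamma_C(\eta))=\vp(k)+c'(\eta)+d'(\eta),
\]
where
\[
c'(\eta)=\frac{\sum_n\eta_n\Sp(n)-\Sp(k-1)}{p-1},\qquad d'(\eta)=\frac{\sum_n\Sp(\eta_n)-\Sp(q+1)}{p-1}.
\]
These are the carry counts in the additions $\sum n\eta_n=k-1$ and $\sum\eta_n=q+1$. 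By digit-sum subadditivity \eqref{eq:digit-subadditivity} (or simply by Kummer's theorem), both are nonnegative integers, so $p$-integrality follows at once.

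\emph{Part (i).} If $p\mid k$, the summand $\vp(k)\ge1$ already makes every monomial coefficient divisible by $p$.

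\emph{Part (ii), classifying the unit terms.} If $k\equiv a\not\equiv0\pmod p$, then $\vp(k)=0$, and a coefficient is a unit exactly when $c'(\eta)=d'(\eta)=0$. I classify such $\eta$ as follows.

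The condition $d'(\eta)=0$ says the multiplicities add to $q+1=p^e+1$ with no carry. Since $q+1$ has digit $1$ in places $0$ and $e$ and zeros elsewhere, either a single $\eta_n=q+1$, or exactly two multiplicities are positive, equal to $q$ and $1$.

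The condition $c'(\eta)=0$ says the weighted addition is carry-free. If an index $m$ occurs with multiplicity at least $p$ and $m>0$, then adding $p$ copies of a nonzero digit of $m$ forces a carry. Hence the index carrying multiplicity $q$ (or $q+1$) must be $0$.

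\emph{Part (ii), the surviving term.} If all of the weight sits at $0$, then $k=1$ and the monomial is $a_0^{q+1}$. Otherwise the monomial is $a_0^q a_{k-1}$, and the single-index addition is trivially carry-free. In both cases the coefficient is
\[
k\,(q+1)\,\frac{(k-1)!}{(k-1)!}=k(q+1)\equiv a\pmod p,
\]
and $a_0=1$. This gives $C_k[x^k]\equiv a\,a_{k-1}\pmod p$, the overlap case $k=1$ being consistent.

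The main obstacle is the careful Legendre bookkeeping for the three-piece decomposition, in particular recognizing $(\Sp(k-1)+1-\Sp(k))/(p-1)=\vp(k)$. After that, the case analysis is straightforward carry-freeness, in parallel with Proposition~\ref{prop:B-unit-divisible}.
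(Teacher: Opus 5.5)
Your proposal is correct and is essentially the paper's argument: Legendre's formula reduces the unit coefficients to $\sum_n\Sp(\eta_n)=2$ and $\sum_n\eta_n\Sp(n)=\Sp(k-1)$; your splitting $\vp(\gamma_C(\eta))=\vp(k)+c'(\eta)+d'(\eta)$ repackages the paper's identity $\Sp(k-1)=\Sp(k)-1+\nu(p-1)$ for $k=p^\nu u$; and your observation that a multiplicity $\ge p$ at a positive index forces a carry does the same work as the paper's explicit digit-sum inequalities. One small slip: for $k=1$ the coefficient of $a_0^{q+1}$ is $1$, not $k(q+1)=q+1$, but both are $\equiv1\pmod p$, so the conclusion is unaffected.
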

\begin{proof}
Let $a^\eta=\prod_{n=0}^{k-1}a_n^{\eta_n}$ occur in $C_k[x^k]$.  Then
\[
        \sum_{n=0}^{k-1}\eta_n=q+1,
        \qquad
        \sum_{n=0}^{k-1}n\eta_n=k-1,
\]
and
\[
        \gamma_C(\eta)=k!\binom{q+1}{\eta_0,\ldots,\eta_{k-1}}\prod_{n=0}^{k-1}\frac1{(n!)^{\eta_n}}.
\]
A Legendre calculation gives
\begin{equation}\label{eq:C-valuation}
(p-1)\vp(\gamma_C(\eta))=-\Sp(k)-1+\Sigma(\eta)+T(\eta),
\end{equation}
where
\[
        \Sigma(\eta):=\sum_{n=0}^{k-1}\Sp(\eta_n),
        \qquad
        T(\eta):=\sum_{n=0}^{k-1}\eta_n\Sp(n).
\]
Since $\Sp(q+1)=2$, digit-sum subadditivity gives $\Sigma(\eta)\ge2$ and $T(\eta)\ge\Sp(k-1)$.

If $p\mid k$, write $k=p^\nu u$ with $\nu\ge1$ and $p\nmid u$.  Then $\Sp(k-1)=\Sp(k)-1+\nu(p-1)$, so \eqref{eq:C-valuation} gives
\[
        (p-1)\vp(\gamma_C(\eta))\ge\nu(p-1),
\]
and every $C$-coefficient is divisible by $p$.

Assume now that $k\equiv a\not\equiv0\pmod p$.  Then $\Sp(k-1)=\Sp(k)-1$, so all coefficients are $p$-integral, and a coefficient survives modulo $p$ only if
\[
        \Sigma(\eta)=2,
        \qquad
        T(\eta)=\Sp(k-1).
\]
The condition $\Sigma(\eta)=2$ leaves only the patterns $\eta_t=q+1$ or $\eta_r=q$, $\eta_s=1$.  In the first case, if $t>0$, then
\[
        \Sp(k-1)=\Sp((q+1)t)\le\Sp(qt)+\Sp(t)=2\Sp(t)<(q+1)\Sp(t)=T(\eta),
\]
a contradiction.  Hence $t=0$, which only gives the immediate case $k=1$.  In the second case, if $r>0$, then
\[
        \Sp(k-1)=\Sp(qr+s)\le\Sp(r)+\Sp(s)<q\Sp(r)+\Sp(s)=T(\eta),
\]
again a contradiction.  Thus $r=0$, $s=k-1$, and the unique surviving monomial is $\eta_0=q$, $\eta_{k-1}=1$.  Its coefficient is
\[
        k!\binom{q+1}{q,1}\frac1{(k-1)!}=k(q+1)\equiv k\equiv a\pmod p.
\]
\end{proof}

\begin{theorem}[The case $a=0$]\label{thm:a-zero}
For every integer $M\ge1$,
\[
        a_{pM}\equiv (-1)^M\pmod p.
\]
\end{theorem}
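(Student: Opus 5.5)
We argue by strong induction on $M$, using the decomposition \eqref{eq:ABC}. For $k=pM$, Proposition~\ref{prop:A-zero} gives $A_k[x^k]\equiv0$ and Proposition~\ref{prop:C-global}(i) gives $C_k[x^k]\equiv0\pmod p$. Hence
\[
        a_{pM}\equiv -B_{pM}[x^{pM}]\pmod p .
\]
By Lemma~\ref{lem:B-integrality} every $B$-coefficient is $p$-integral, so only monomials with unit scalar coefficient survive. These are the ones with $c(\eta)+d(\eta)=e$.

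For the base case $M=1$, Proposition~\ref{prop:B-unit-divisible} allows the exceptional monomial $a_0^{q-p}a_1^p$ together with monomials using only indices divisible by $p$. The latter are impossible, since $k=p$ and $\ell<k$ force the only such index to be $0$, which has total weight $0\neq p$. The coefficient of the exceptional monomial is
\[
        \frac{p!}{q}\binom{q}{p}.
\]
Lucas-type reasoning, or Wilson's theorem together with $\binom{q}{p}/(q/p)\equiv 1$, should give $\frac{p!}{q}\binom{q}{p}\equiv -1\pmod p$. We will also need $a_1\equiv 1\pmod p$. From \eqref{eq:ABC} with $k=1$ we get $a_1=-C_1[x]=-(q+1)$, which is $\equiv -1$; I will double-check this. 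With $a_1\equiv-1$ we obtain $a_1^p\equiv-1$, so $a_p\equiv -(-1)(-1)=-1$, as required.

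For $M>1$, Proposition~\ref{prop:B-unit-divisible} says that every unit monomial uses only indices $p n'$. Write $\eta_{pn'}=\eta'_{n'}$. Then $\sum\eta'_{n'}=q$ and $\sum n'\eta'_{n'}=M$, and the monomial is $\prod a_{pn'}^{\eta'_{n'}}$. By induction (all $n'<M$ here, except possibly $n'=M$ when $\eta'$ is concentrated, which cannot happen since $\eta'_M\le 1<q$ fails the sum unless the rest is $a_0$; this case needs separate treatment) we have $a_{pn'}\equiv(-1)^{n'}$. So every surviving monomial reduces to $(-1)^{\sum n'\eta'_{n'}}=(-1)^M$. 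Consequently
\[
        a_{pM}\equiv -(-1)^M\sum_{\eta'}\gamma_B(\eta)\pmod p ,
\]
and the remaining task is to show that the sum of the scalar coefficients is $\equiv -1\pmod p$.

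For this I compare with a generating function identity. The sum of all $B$-coefficients over $\eta$ supported on multiples of $p$ equals
\[
        \frac{(pM)!}{q}\,[x^{pM}]\Bigl(\sum_{n\ge0}\frac{x^{pn}}{(pn)!}\Bigr)^q
\]
(restricted to indices $<pM$, which excludes only the term $\eta_{pM}=1$, and that term is impossible because $\sum\eta=q$ forces $\eta_0=q-1$, giving weight $pM$ with index $pM$ not allowed). The non-unit terms all vanish mod $p$ by Lemma~\ref{lem:B-integrality}, so we may sum over all such $\eta$. The function $\sum_n x^{pn}/(pn)!$ is the average of $e^{\zeta x}$ over the $p$th roots of unity $\zeta$; its $q$th power expands as a sum of exponentials $e^{(\zeta_1+\dots+\zeta_q)x}$. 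The resulting sum of $(\sum\zeta_i)^{pM}$ over tuples, divided by $p^q\cdot q$, must then be evaluated $p$-adically. This evaluation is the main obstacle. My first attempt is to work directly instead: reduce via Lucas' theorem to $\eta'$ with multiplicities divisible by $p^{e-1}$ (by Lemma~\ref{lem:carry-depth}, with $s=1$ forced as in the proof of Proposition~\ref{prop:B-unit-divisible}, i.e.\ $d(\eta)=e-1$ and $c(\eta)=1$) and an $\eta'$-addition that is carry-free. The unit monomials then correspond to $\eta=p^{e-1}u$ with $\sum u=p$ and carry-free $\sum u_n n'=M/\ldots$. After this reduction I would evaluate the coefficients using Wilson's theorem, $(p-1)!\equiv-1$, expecting each coefficient to contribute a multinomial that sums to $-1$ by a Vandermonde-type identity.
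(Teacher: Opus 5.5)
Your reduction is correct and essentially the paper's. For $M\ge2$, Propositions~\ref{prop:A-zero}, \ref{prop:C-global} and~\ref{prop:B-unit-divisible}, the $p$-integrality of every $\gamma_B(\eta)$, and the induction hypothesis give $a_{pM}\equiv-(-1)^M\Sigma_M\pmod p$, where
\[
        \Sigma_M:=\frac{(pM)!}{q}\,[y^M]\Bigl(\sum_{n=0}^{M-1}\frac{y^n}{(pn)!}\Bigr)^q .
\]
(In the base case $C_1=x\,a_0^{q+1}$, so $C_1[x]=1$ and $a_1=-1$ exactly, not $-(q+1)$; otherwise your $M=1$ argument is fine.)

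\textbf{The gap.} The whole content of the theorem is the congruence $\Sigma_M\equiv-1\pmod p$, and you do not prove it. Neither of your suggested routes works as written.

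\emph{Direct route.} Your description of the unit terms is wrong. For $\eta$ supported on $p\mathbb Z$ one has $c(\eta)=c(\eta')$ and $d(\eta)=d(\eta')$ for the addition $\sum n'\eta'_{n'}=M$. So the unit terms are the carry-free ones with $d(\eta)=e$, whose multiplicities are typically prime to $p$, plus the single term $\eta'_{M/p}=p$ when $M$ is a power of $p$. Also, Lemma~\ref{lem:carry-depth} with $d(\eta)=e-1$ gives divisibility by $p$, not by $p^{e-1}$. For instance, with $p=3$, $e=2$, $M=2$ the only term is $\eta'_0=7$, $\eta'_1=2$, with scalar $80\equiv-1$, and your description excludes it.

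\emph{Roots-of-unity route.} Here you must use the untruncated series, and in it the term $\eta_{pM}=1$, $\eta_0=q-1$ is not impossible. It is only absent from $B_{pM}$, and its scalar is exactly $1$. For $M\ge2$ the untruncated coefficient is $\equiv0\pmod p$. So the entire answer $-1$ comes from this truncation correction, which your sketch discards.

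\textbf{The missing ingredient} is the truncated-exponential identity of Lemma~\ref{lem:trunc-exp}. Write $(pn)!=p^nn!\,\nu_n$ with $\nu_n\equiv(-1)^n\pmod p$, as in \eqref{eq:nu-i}. Since every scalar in $\Sigma_M$ is $p$-integral, you may replace $1/(pn)!$ by $(-1)^n/(p^nn!)$ termwise modulo $p$. (The paper makes the equivalent replacement via the estimate $\vp\bigl(p^j\binom qj/q\bigr)\ge1$.) The series becomes $S_{M-1}(y)=e^{-y/p}-R_M(y)$ with $R_M=O(y^M)$. At order $y^M$ only the terms of $(e^{-y/p}-R_M)^q$ with at most one factor $R_M$ contribute, so
\[
        [y^M]S_{M-1}(y)^q=\frac{(-q/p)^M}{M!}-q\frac{(-1)^M}{p^MM!}.
\]
Multiplying by $(pM)!/q=\nu_Mp^MM!/q$ gives $\Sigma_M\equiv\nu_M(-1)^M(q^{M-1}-1)\equiv-1\pmod p$ for $M\ge2$. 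The first summand is the untruncated contribution, which vanishes modulo $p$; the second is the truncation correction. With this, your reduction yields $a_{pM}\equiv(-1)^M$.
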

We give the proof in the next section.

\section{The residue class \texorpdfstring{$a=0$}{a=0}}\label{sec:a-zero}
We prove Theorem~\ref{thm:a-zero}.  The same truncated exponential identity works for every $e\ge2$.

\begin{lemma}[A truncated-exponential coefficient identity]\label{lem:trunc-exp}
For $M\ge1$, set
\[
        S_{M-1}(y):=\sum_{i=0}^{M-1}\frac{(-1)^i}{p^ii!}y^i.
\]
Then
\[
        [y^M]S_{M-1}(y)^q
        =\frac{(-q/p)^M}{M!}-q\frac{(-1)^M}{p^M M!}.
\]
Equivalently,
\[
        [y^M]\left(\sum_{i=0}^{M-1}\frac{y^i}{p^ii!}\right)^q
        =\frac{p^{(e-1)M}}{M!}-\frac{q}{p^M M!}.
\]
\end{lemma}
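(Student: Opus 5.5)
The identity follows by comparing $S_{M-1}$ with the full exponential $E(y):=\exp(-y/p)$, viewed as a formal power series in $\mathbb{Q}[\![y]\!]$. By construction $S_{M-1}(y)$ is the degree-$(M-1)$ truncation of $E(y)$:
\[
        E(y)=S_{M-1}(y)+R(y),
        \qquad
        R(y)=\frac{(-1)^M}{p^MM!}y^M+\bigo(y^{M+1}).
\]
Because we only need the coefficient of $y^M$, we can expand $E(y)^q=(S_{M-1}+R)^q$ binomially and discard everything of $y$-adic order greater than $M$.

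The steps are as follows.

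\emph{Step 1.} Compute the left side of the comparison. Since $E(y)^q=\exp(-qy/p)$, we get
\[
        [y^M]E(y)^q=\frac{(-q/p)^M}{M!}.
\]

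\emph{Step 2.} Expand the right side to order $M$. We have
\[
        (S_{M-1}+R)^q=S_{M-1}^q+qS_{M-1}^{q-1}R+\binom q2 S_{M-1}^{q-2}R^2+\cdots .
\]
Every term containing $R^j$ with $j\ge2$ has $y$-adic order at least $2M>M$, so it does not affect $[y^M]$. In the linear term, $R$ already has order $M$, so only the constant term of $S_{M-1}^{q-1}$ contributes. That constant term is $S_{M-1}(0)^{q-1}=1$, which gives
\[
        [y^M]\bigl(qS_{M-1}^{q-1}R\bigr)=q\,\frac{(-1)^M}{p^MM!}.
\]

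\emph{Step 3.} Subtract. This yields
\[
        [y^M]S_{M-1}^q=\frac{(-q/p)^M}{M!}-q\frac{(-1)^M}{p^MM!},
\]
which is the first form of the lemma.

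\emph{Step 4.} Derive the equivalent form by substituting $y\mapsto -y$. Under this substitution $S_{M-1}(-y)=\sum_{i<M} y^i/(p^ii!)$, and $[y^M]$ picks up a factor $(-1)^M$. This turns the right side into
\[
        \frac{(q/p)^M}{M!}-\frac{q}{p^MM!},
\]
and since $q/p=p^{e-1}$ this is exactly the stated expression.

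There is no real obstacle here. The argument is a formal power series computation over $\mathbb{Q}$, so no convergence question arises. The only point that needs care is bookkeeping: checking that the $y$-adic orders really exclude all terms except the two kept in Step 2. This holds because $M\ge1$ gives $2M>M$.
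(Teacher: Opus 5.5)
Your proof is correct and matches the paper's argument. Both compare $S_{M-1}$ with $E(y)=e^{-y/p}$, discard the terms with two or more copies of the tail (order $>M$), isolate the linear correction $q\frac{(-1)^M}{p^MM!}$, and get the second form by $y\mapsto -y$; the only cosmetic difference is that you expand $(S_{M-1}+R)^q$ where the paper expands $(E-R_M)^q$, which changes nothing since both $S_{M-1}$ and $E$ have constant term $1$.
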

\begin{proof}
Write $E(y)=e^{-y/p}$ and
\[
        R_M(y):=\sum_{i\ge M}\frac{(-1)^i}{p^ii!}y^i.
\]
Then $S_{M-1}=E-R_M$ and
\[
        -R_M(y)=-\frac{(-1)^M}{p^MM!}y^M+\bigo(y^{M+1}).
\]
In the expansion $(E-R_M)^q$, all terms with at least two copies of $R_M$ have $y$-adic order $>M$.  Hence
\[
\begin{aligned}
        [y^M]S_{M-1}(y)^q
        &=[y^M]E(y)^q-q\frac{(-1)^M}{p^MM!}        \\
        &=\frac{(-q/p)^M}{M!}-q\frac{(-1)^M}{p^MM!}.
\end{aligned}
\]
Replacing $y$ by $-y$ gives the second formula.
\end{proof}

\begin{lemma}\label{lem:first-a1-ap}
One has
\[
        a_1=-1,
        \qquad
        a_p\equiv -1\pmod p.
\]
\end{lemma}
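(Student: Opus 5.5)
The plan is to read off both values directly from the coefficient recursion \eqref{eq:ABC}, using the structural results of Section~\ref{sec:recursion} to discard everything except a single monomial.

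\emph{The value $a_1$.} Take $k=1$ in \eqref{eq:ABC}. Since $q\nmid 1$, we have $A_1[x^1]=0$. The truncated sum is just $a_0=1$, so $B_1=\frac1q a_0^q$ is a constant and $B_1[x^1]=0$. Finally $C_1=1!\,x\,a_0^{q+1}=x$, so $C_1[x^1]=1$. Hence $a_1=0-0-1=-1$ exactly.

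\emph{The value $a_p$ modulo $p$.} Take $k=p$. Since $q=p^e$ with $e\ge2$, we have $q\nmid p$, so $A_p[x^p]=0$; in any case Proposition~\ref{prop:A-zero} gives this modulo $p$. Since $p\mid k$, Proposition~\ref{prop:C-global}(i) gives $C_p[x^p]\equiv0\pmod p$. The $a_n$ are $p$-integral by the Salerno--Silverman integrality theorem, and every $B$-coefficient $\gamma_B(\eta)$ is $p$-integral by Lemma~\ref{lem:B-integrality}. So modulo $p$ only the monomials of $B_p[x^p]$ with unit coefficient survive.

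Every such monomial uses indices in $\{0,\dots,p-1\}$ with $\sum n\eta_n=p$. It therefore has a positive index not divisible by $p$, so Proposition~\ref{prop:B-unit-divisible} forces it to be $a_0^{q-p}a_1^p$. It remains to compute the coefficient of this monomial:
\[
        \gamma_B=\frac{p!}{q}\binom{q}{p}=\frac{(q-1)!}{(q-p)!}=\prod_{j=1}^{p-1}(q-j)\equiv(-1)^{p-1}(p-1)!\equiv-1\pmod p,
\]
using Wilson's theorem and the fact that $p$ is odd. Together with $a_1^p=-1$, this gives $B_p[x^p]\equiv(-1)(-1)=1$. Therefore $a_p\equiv 0-1-0=-1\pmod p$.

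\emph{Where the care is needed.} The only step that is more than bookkeeping is justifying that the non-unit $B$-monomials really vanish modulo $p$. This needs both the integrality of the $a_n$ and Lemma~\ref{lem:B-integrality}, after which Proposition~\ref{prop:B-unit-divisible} pins down the unique survivor. The final sign comes from combining Wilson's theorem with $a_1^p=-1$.
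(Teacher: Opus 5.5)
Your proof is correct and follows the paper's argument. The paper gets $a_1=-1$ by comparing the coefficient of $x^{q+1}$ in $f(x)^q+qf(x)^{q+1}=f(x^q)$, which is your $k=1$ case of \eqref{eq:ABC} multiplied by $q$; for $a_p$ it likewise discards the $A$- and $C$-terms via Propositions~\ref{prop:A-zero} and~\ref{prop:C-global}, isolates $a_0^{q-p}a_1^p$ by Proposition~\ref{prop:B-unit-divisible}, and evaluates $\prod_{j=1}^{p-1}(q-j)\equiv-1$ by Wilson's theorem.
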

\begin{proof}
Write $f(x)=x+a_1x^2+\bigo(x^3)$.  Comparing the coefficient of $x^{q+1}$ in
\[
        f(x)^q+qf(x)^{q+1}=f(x^q)
\]
gives $qa_1+q=0$, hence $a_1=-1$.

For $a_p$, Propositions~\ref{prop:A-zero} and \ref{prop:C-global} give $a_p\equiv -B_p[x^p]\pmod p$.  By Proposition~\ref{prop:B-unit-divisible}, the only unit-coefficient monomial is $a_0^{q-p}a_1^p$.  Its scalar is
\[
        \gamma_p=\frac{p!}{q}\binom{q}{q-p,p}=\frac{(q-1)!}{(q-p)!}=\prod_{j=1}^{p-1}(q-j)
        \equiv (-1)^{p-1}(p-1)!\equiv -1\pmod p
\]
by Wilson's theorem.  Therefore
\[
        B_p[x^p]\equiv \gamma_pa_1^p\equiv (-1)(-1)^p\equiv 1\pmod p,
\]
and $a_p\equiv-1\pmod p$.
\end{proof}

\begin{proof}[Proof of Theorem~\ref{thm:a-zero}]
We argue by induction on $M$.  The case $M=1$ is Lemma~\ref{lem:first-a1-ap}.  Assume $M\ge2$ and
\[
        a_{pi}\equiv (-1)^i\pmod p\qquad(1\le i<M).
\]
By Propositions~\ref{prop:A-zero} and~\ref{prop:C-global},
\[
        a_{pM}\equiv -B_{pM}[x^{pM}]\pmod p.
\]
Since $pM>p$, Proposition~\ref{prop:B-unit-divisible} shows that only indices divisible by $p$ can survive modulo $p$.  With $y=x^p$ we get
\begin{equation}\label{eq:B-div-special}
        B_{pM}[x^{pM}]
        \equiv \frac{(pM)!}{q}[y^M]
        \left(\sum_{i=0}^{M-1}\frac{a_{pi}}{(pi)!}y^i\right)^q
        \pmod p.
\end{equation}
Put
\[
        \nu_i:=\frac{(pi)!}{p^ii!}.
\]
Then $\nu_i\in\Zp^\times$ and
\begin{equation}\label{eq:nu-i}
        \nu_i=\prod_{r=1}^i\prod_{t=1}^{p-1}(p(r-1)+t)\equiv ((p-1)!)^i\equiv (-1)^i\pmod p.
\end{equation}
Thus $a_{pi}\nu_i^{-1}\equiv1\pmod p$ for $0\le i<M$.  Write $a_{pi}\nu_i^{-1}=1+pz_i$ with $z_i\in\Zp$ and define
\[
        H_M(y):=\sum_{i=0}^{M-1}\frac{y^i}{p^ii!},
        \qquad
        L_M(y):=\sum_{i=0}^{M-1}z_i\frac{y^i}{p^ii!}.
\]
Then
\[
        \sum_{i=0}^{M-1}\frac{a_{pi}}{(pi)!}y^i=H_M(y)+pL_M(y).
\]
We claim that
\begin{equation}\label{eq:HM-replacement}
        \frac{p^MM!}{q}[y^M](H_M+pL_M)^q
        \equiv
        \frac{p^MM!}{q}[y^M]H_M^q\pmod p.
\end{equation}
Indeed,
\[
        (H_M+pL_M)^q-H_M^q
        =\sum_{j=1}^q\binom qj p^jH_M^{q-j}L_M^j.
\]
Write
\[
        H_M(y)=\sum_{i=0}^{M-1}h_i y^i,
        \qquad
        L_M(y)=\sum_{i=0}^{M-1}\ell_i y^i,
\]
where
\[
        h_i=\frac1{p^ii!},
        \qquad
        \ell_i=z_i\frac1{p^ii!}.
\]
A typical term in $M![y^M]H_M^{q-j}L_M^j$ is
\[
        \binom{q-j}{a_0,\ldots,a_{M-1}}
        \binom{j}{b_0,\ldots,b_{M-1}}
        \frac{M!}{\prod_{i=0}^{M-1}(i!)^{a_i+b_i}}
        \frac{\prod_{i=0}^{M-1}z_i^{\,b_i}}{p^{\sum_i i(a_i+b_i)}},
\]
where
\[
        \sum_i a_i=q-j,
        \qquad
        \sum_i b_i=j,
        \qquad
        \sum_i i(a_i+b_i)=M.
\]
The multinomial factor $M!/\prod_i (i!)^{a_i+b_i}$ is an integer, and the last displayed condition gives the exact $p$-power denominator $p^M$.  Hence
\[
        M![y^M]H_M^{q-j}L_M^j\in p^{-M}\Zp.
\]
Therefore it is enough to show
\[
        \vp\left(\frac{p^j}{q}\binom qj\right)\ge1\qquad(1\le j\le q).
\]
Using $\binom qj=\frac qj\binom{q-1}{j-1}$, we get
\[
        \frac{p^j}{q}\binom qj=\frac{p^j}{j}\binom{q-1}{j-1},
\]
so
\[
        \vp\left(\frac{p^j}{q}\binom qj\right)\ge j-\vp(j)\ge1.
\]
This gives \eqref{eq:HM-replacement}.

Combining \eqref{eq:B-div-special} and \eqref{eq:HM-replacement}, and using $(pM)!=p^MM!\nu_M$, gives
\[
        a_{pM}\equiv -\nu_M\frac{p^MM!}{q}[y^M]H_M(y)^q\pmod p.
\]
By Lemma~\ref{lem:trunc-exp},
\[
        [y^M]H_M(y)^q=\frac{p^{(e-1)M}}{M!}-\frac{q}{p^MM!}.
\]
Therefore
\[
        a_{pM}\equiv -\nu_M\bigl(p^{e(M-1)}-1\bigr)\pmod p.
\]
Since $M\ge2$ and $e\ge2$, this is congruent to $\nu_M$.  Finally \eqref{eq:nu-i} gives $\nu_M\equiv(-1)^M\pmod p$.
\end{proof}

\section{Vector partitions and the \texorpdfstring{$B$}{B}-term for \texorpdfstring{$a\ne0$}{a not equal 0}}\label{sec:vector-partitions}
The higher base-$p$ digits of the index will be encoded by vectors.

\begin{definition}\label{def:digit-vector}
Fix $L\ge1$.  A digit vector is an element
\[
        d=(d_1,\ldots,d_L)\in\{0,1,\ldots,p-1\}^L.
\]
Its weight and numeric value are
\[
        |d|:=d_1+\cdots+d_L,
        \qquad
        N(d):=\sum_{i=1}^Ld_ip^i.
\]
A vector partition of $d$ is a formal product
\[
        \Pi=\prod_{\beta\ne0}\beta^{r_\beta},
        \qquad r_\beta\in\mathbb Z_{\ge0},
        \qquad \sum_{\beta\ne0}r_\beta\beta=d.
\]
The total number of blocks is $R(\Pi):=\sum_{\beta\ne0}r_\beta$.
\end{definition}

Fix $a\in\{0,1,\ldots,p-1\}$.  For a digit vector $\beta$ define
\[
        H_\beta(y):=\sum_{j=0}^a\frac{a_{N(\beta)+j}}{j!}y^j,
        \quad
        F_a(y):=\sum_{j=0}^a\frac{a_j}{j!}y^j,
        \quad
        \widetilde H_d(y):=\sum_{j=0}^{a-1}\frac{a_{N(d)+j}}{j!}y^j.
\]
Write
\[
        d!:=\prod_{i=1}^Ld_i!,
        \qquad
        \beta!:=\prod_{i=1}^L\beta_i!.
\]

\paragraph{A falling-product identity.}
For an integer parameter $t$ and $N\ge1$, set
\begin{equation}\label{eq:Xi-def}
        \Xi_t(N):=\prod_{u=1}^{N-1}(t-u).
\end{equation}
In particular, when $t=q=p^e$ one has
\begin{equation}\label{eq:Xi-q-modp}
        \Xi_q(N)=\Xi_{p^e}(N)\equiv (-1)^{N-1}(N-1)!\pmod p.
\end{equation}
If $N\le q$, then $\Xi_q(N)=\frac{(q-1)!}{(q-N)!}$.  Hence if $N>q$, then any formal vector-partition contribution with $N$ positive blocks is $0\pmod p$: no actual multinomial term occurs, because the zero-index multiplicity would be negative, and \eqref{eq:Xi-q-modp} shows that the formal continuation is divisible by $p$.  Moreover, in a vector partition of a fixed digit vector $d$, every contributing block multiplicity $r_\beta$ satisfies $0\le r_\beta\le p-1$; since each digit of every block $\beta$ also lies in $\{0,\ldots,p-1\}$, the factorials $r_\beta!$ and $(\beta!)^{r_\beta}$ are $p$-adic units.  Later, when the zero-vector multiplicities $n_j$ appear, one has $\sum_{j=1}^a jn_j\le a\le p-1$, hence each $n_j\le p-1$ and each $n_j!$ is also a $p$-adic unit.  Thus the $p$-divisibility of the formal factor $\Xi_q(N)$ cannot be cancelled by any denominator.

\begin{proposition}[Vector-partition expansion of the $B$-term]\label{prop:vector-B}
Let $d\ne0$ be a digit vector and let $k=N(d)+a$ with $1\le a\le p-1$.  Then
\begin{equation}\label{eq:vector-B}
\begin{aligned}
B_k[x^k]\equiv a![y^a]\Biggl(
&\frac{\widetilde H_d(y)}{F_a(y)} +\sum_{\substack{\Pi\vdash d\\ \Pi\ne(d)}}
(-1)^{R(\Pi)-1}
\frac{(R(\Pi)-1)!\,d!}{\prod_{\beta\ne0}r_\beta!(\beta!)^{r_\beta}} \\
&\qquad\qquad\times \prod_{\beta\ne0}\left(\frac{H_\beta(y)}{F_a(y)}\right)^{r_\beta}
\Biggr)\pmod p.
\end{aligned}
\end{equation}
The case $d=0$ is handled separately in the proof of Theorem~\ref{thm:digit-sum}.
\end{proposition}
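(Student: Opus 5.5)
The plan is to expand $B_k[x^k]$ monomial by monomial, following the bookkeeping behind \eqref{eq:B-cd}. Then identify which monomials survive modulo $p$, and show that the survivors reassemble into the right-hand side of \eqref{eq:vector-B}. I have not checked the sign and unit constants in Step 3 in detail; that step is where the plan is least certain.

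\emph{Step 1: split each index.} Since $a_k$ itself does not occur in $B_k$, every index in $B_k$ satisfies $n\le k-1$. Write each such index as $n=N(\beta)+j$, with $\beta$ the vector of higher digits of $n$ and $j$ its units digit. Factors with $\beta=0$ and $j\le a$ come from $F_a(y)$. A monomial $a^\eta$ with $\sum n\eta_n=k$ then determines:
\begin{enumerate}[label=\textup{(\roman*)},leftmargin=2.2em]
\item a multiset of $R$ ``positive blocks'' with $\beta\ne0$, i.e.\ a formal product $\Pi=\prod\beta^{r_\beta}$ with $\sum r_\beta\beta$ numerically equal to $N(d)$ up to carries;
\item $q-R$ ``units factors'' with $\beta=0$.
\end{enumerate}
I first discard every $\eta$ for which the weighted addition $\sum n\eta_n=k$ has a carry, so that $c(\eta)>0$. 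For the remaining $\eta$ the units digits add to $a$ without carry, and the higher digits add digitwise to $d$. So $\sum r_\beta\beta=d$ is a genuine vector partition with all $r_\beta\le p-1$, and every $j\le a$. For these terms I compute $d(\eta)$ directly. The zero-block multiplicities sum to $q-R$, and $\Sp(q-R)=e(p-1)-\Sp(R-1)$. Combined with the unit multiplicities $r_\beta,n_j\le p-1$, this gives $d(\eta)=e$ exactly, so $c(\eta)+d(\eta)=e$ and the scalar is a unit.

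The discarded terms must also die modulo $p$. For these I will use \eqref{eq:B-cd} together with Lemma~\ref{lem:carry-depth} and the estimate in the proof of Lemma~\ref{lem:B-integrality}. When $c(\eta)\ge1$ these should give $c(\eta)+d(\eta)\ge e+1$, with the possible exception of configurations where $d(\eta)<e$. In those configurations Lemma~\ref{lem:carry-depth} forces all multiplicities to be divisible by $p$, and I expect to exclude them because $p\nmid k$ (since $1\le a\le p-1$), arguing as in Proposition~\ref{prop:B-unit-divisible}. This carry analysis is the step I expect to be the main obstacle. The issue is verifying that $c(\eta)=0$ is really necessary for survival, since a priori one carry in the weighted sum could be compensated by $d(\eta)=e-1$. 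I would first try to rule this out by the same inequality $(p^s-1)A\le s(p-1)$ used in Proposition~\ref{prop:B-unit-divisible}, now exploiting $p\nmid k$.

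\emph{Step 2: assemble the surviving scalar.} For a surviving $\eta$ the scalar $\gamma_B(\eta)$ factors as
\[
\frac{k!}{q}\cdot\frac{q!}{(q-R)!\,\prod r_\beta!}\cdot\frac{(q-R)!}{\prod n_j!}\cdot\prod\frac{1}{(n!)^{\eta_n}} .
\]
Here $\frac1q\frac{q!}{(q-R)!}=\Xi_q(R)\equiv(-1)^{R-1}(R-1)!$ by \eqref{eq:Xi-q-modp}. For $R>q$ the formal continuation vanishes modulo $p$, as remarked after \eqref{eq:Xi-def}. Carry-freeness gives $\vp(k!)=\sum\eta_n\vp(n!)$. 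For the prime-to-$p$ parts I use the carry-free Lucas-type congruence $\Tp(N)\equiv(-1)^{\vp(N!)}\prod_i N_i!$, which gives
\[
\Tp(k)\Big/\prod\Tp(n)^{\eta_n}\equiv \frac{a!\,d!}{\prod_j (j!)^{\#}\prod_\beta(\beta!)^{r_\beta}}
\]
up to signs. These signs cancel because the valuations add exactly.

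\emph{Step 3: recognize generating functions.} The units factors contribute $[y^{\cdot}]F_a(y)^{q-R}$. Since every $a_j/j!$ with $j\le a<p$ is $p$-integral, one has $F_a(y)^q\equiv F_a(y^q)\equiv1\pmod p$ in degrees $\le a<q$, so $F_a^{q-R}\equiv F_a^{-R}$. Each positive block $\beta$ carrying units digit $j$ contributes $a_{N(\beta)+j}y^j/j!$, i.e.\ a factor $H_\beta(y)$. Collecting gives $a![y^a]$ of $(-1)^{R-1}(R-1)!\,\frac{d!}{\prod r_\beta!(\beta!)^{r_\beta}}\prod(H_\beta/F_a)^{r_\beta}$. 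The single-block partition $\Pi=(d)$ has coefficient $1$. In that term the index $N(d)+a=k$ is excluded, which replaces $H_d$ by $\widetilde H_d$ and yields \eqref{eq:vector-B}.
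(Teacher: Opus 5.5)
Your proposal is correct and follows the paper's proof. Discard the monomials that are not weighted carry-free using $\vp(\gamma_B(\eta))=c(\eta)+d(\eta)-e$, evaluate the carry-free scalars by multinomial Lucas together with $\Xi_q(N)\equiv(-1)^{N-1}(N-1)!\pmod p$, and resum. The step you call the main obstacle is already settled by the sentence just before it, and this is exactly the paper's argument. If $d(\eta)<e$ (including $d(\eta)=e-1$), Lemma~\ref{lem:carry-depth} makes every $\eta_n$ divisible by $p$, so $p\mid\sum_n n\eta_n=k$, which is impossible. If $d(\eta)\ge e$ and $c(\eta)\ge1$, then $\vp(\gamma_B(\eta))\ge1$. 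The inequality $(p^s-1)A\le s(p-1)$ is not needed.

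Three points to correct, none of which breaks the argument.

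\emph{Unit claim.} Your assertion that every carry-free term has $d(\eta)=e$, and hence a unit scalar, is false. Let $M$ be the number of positive-index factors. Then $\eta_0=q-M$ and every other $\eta_n\le p-1$, which gives $d(\eta)=e+\vp((M-1)!)$. For example, with $p=3$, $e=2$, $k=25$, the carry-free monomial $a_0^5a_3a_4a_9^2$ has scalar of valuation $1$. You never use unit-ness afterwards, since Step~2 reduces each carry-free scalar modulo $p$ directly. The paper only uses the converse, that unit scalars come from carry-free terms.

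\emph{Displayed factorization.} Your formula for $\gamma_B(\eta)$ omits $\eta_0!$ and has $\prod_\beta r_\beta!$ where $\prod_{\beta,j}m_{\beta,j}!$ belongs. The multinomial coefficients of $F_a^{q-R}$ and $H_\beta^{r_\beta}$ that you invoke in Step~3 restore the correct count, so only the display needs fixing.

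\emph{Resummation.} This is the one real difference from the paper, in how the $\beta=0$ factors are resummed. You keep $\eta_0$ inside $F_a^{q-R}$, use $\Xi_q(R)$, and pass to $F_a^{-R}$ via $F_a(y)^q\equiv1\pmod{(p,y^q)}$. The paper splits off $\eta_0=q-M$, reduces $\Xi_q(M)$ modulo $p$, and recognizes the negative-binomial series of $F_a^{-R}$. Since $\Xi_q(R)\,(q-R)!/(q-M)!=\Xi_q(M)$, these are the same computation organized differently.
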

\begin{proof}
Let $a^\eta=\prod a_n^{\eta_n}$ be a monomial in $B_k[x^k]$.  Since $k\not\equiv0\pmod p$, a unit coefficient must satisfy $c(\eta)+d(\eta)=e$.  If $d(\eta)<e$, then Lemma~\ref{lem:carry-depth} forces every $\eta_n$ to be divisible by $p$, and then $k=\sum n\eta_n$ is divisible by $p$, impossible.  Hence
\[
        d(\eta)=e,
        \qquad
        c(\eta)=0.
\]
Thus every monomial surviving modulo $p$ is weighted carry-free in the addition $\sum n\eta_n=k$.

Write each positive index uniquely as
\[
        n=N(\beta)+j,
        \qquad
        0\le j\le a,
\]
where $\beta$ records the digits in the $p,p^2,\ldots,p^L$ positions.  For $\beta\ne0$ put
\[
        m_{\beta,j}:=\eta_{N(\beta)+j},
        \qquad
        r_\beta:=\sum_{j=0}^a m_{\beta,j},
\]
and for the zero vector put $n_j:=\eta_j$ for $1\le j\le a$.  Carry-freeness is equivalent to
\[
        \sum_{\beta\ne0}r_\beta\beta=d,
        \qquad
        \sum_{\beta\ne0}\sum_{j=0}^a jm_{\beta,j}+\sum_{j=1}^a jn_j=a.
\]
Thus the nonzero vectors form a vector partition $\Pi=\prod\beta^{r_\beta}$ of $d$.  Since
\[
        \sum_{j=1}^a jn_j\le a\le p-1,
\]
one has $n_j\le p-1$ for every $j$, so the zero-vector factorials $n_j!$ are also $p$-adic units.

For $\Pi=(d)$, only the top choice $j=a$, namely the block $N(d)+a=k$, is forbidden; the choices $0\le j<a$ remain, which is exactly the replacement $H_d\mapsto\widetilde H_d$.  Fix $\Pi$, and set
\[
        R:=R(\Pi),
        \qquad
        t:=\sum_{j=1}^a n_j,
        \qquad
        M:=R+t.
\]
For an actual monomial one has $M\le q$, so $\eta_0=q-M\ge0$.  For the formal extension to $M>q$, we use \eqref{eq:Xi-q-modp}.  Thus the same displayed scalar expression may be written uniformly as
\[
\begin{aligned}
        k!\frac{\Xi_q(M)}{\prod_{\beta,j}m_{\beta,j}!\prod_{j=1}^a n_j!}
        \prod_{\beta,j}\frac1{(N(\beta)+j)!^{m_{\beta,j}}}
        \prod_{j=1}^a\frac1{(j!)^{n_j}}.
\end{aligned}
\]
Modulo $p$,
\[
        \Xi_q(M)=\prod_{u=1}^{M-1}(q-u)\equiv(-1)^{M-1}(M-1)!.
\]
By \eqref{eq:Xi-q-modp}, the same formal expression may be used uniformly even when $M>q$.
By Lucas' theorem in multinomial form, applied to the carry-free addition of the positive indices,
\[
        \frac{k!}{\prod_{\beta,j}(N(\beta)+j)!^{m_{\beta,j}}\prod_j(j!)^{n_j}}
        \equiv
        \frac{a!d!}{\prod_{\beta,j}j!^{m_{\beta,j}}(\beta!)^{m_{\beta,j}}\prod_j(j!)^{n_j}}
        \pmod p.
\]
Summing first over the zero-vector blocks gives the negative-binomial expansion of $F_a(y)^{-R}$, and summing over $m_{\beta,j}$ with fixed $r_\beta$ gives $H_\beta(y)^{r_\beta}/r_\beta!$.  Collecting the factors gives exactly \eqref{eq:vector-B}.
\end{proof}

\section{Low coefficients and a multivariate cumulant collapse for \texorpdfstring{$a\ne0$}{a not equal 0}}\label{sec:cumulant}
The first block of coefficients is independent of $e$.

\begin{proposition}[The first block modulo $p$]\label{prop:first-block}
For $0\le n\le p-1$,
\[
        a_n\equiv (-1)^n(n+1)^{n-1}\pmod p,
\]
with the convention that the case $n=0$ gives $a_0=1$.
\end{proposition}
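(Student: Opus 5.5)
The plan is to derive a closed generating-function identity modulo $p$ for the first block, and then solve it by Lagrange inversion. Fix $1\le n\le p-1$ and start from the recursion \eqref{eq:ABC}.

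\emph{The $A$- and $C$-terms.} Since $q\nmid n$, the $A$-term vanishes identically. By Proposition~\ref{prop:C-global}(ii) with residue $a=n$, we have $C_n[x^n]\equiv n\,a_{n-1}\pmod p$.

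\emph{The $B$-term.} Because $p\nmid n$, the argument at the start of the proof of Proposition~\ref{prop:vector-B} applies verbatim, with no higher digits present. A unit coefficient forces $d(\eta)=e$ and $c(\eta)=0$. Since all indices are $<p$, carry-freeness holds automatically, and every factorial $j!$ and every $\eta_j!$ with $j\ge1$ is a unit. Writing $M=\sum_{j\ge1}\eta_j$, the scalar $\gamma_B(\eta)$ becomes
\[
\frac{n!}{q}\cdot\frac{q!}{(q-M)!\prod_{j\ge1}\eta_j!\,(j!)^{\eta_j}}
= n!\,\frac{\Xi_q(M)}{\prod_{j\ge 1}\eta_j!\,(j!)^{\eta_j}},
\qquad
\Xi_q(M)\equiv(-1)^{M-1}(M-1)!\pmod p
\]
by \eqref{eq:Xi-q-modp}. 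Note $M\le n<p\le q$, so no formal continuation is needed. Summing over $\eta$ recognizes the logarithm series:
\[
B_n[x^n]\equiv n!\,[y^n]\log F_{n-1}(y)\pmod p,
\qquad
F_{n-1}(y):=\sum_{j<n}\frac{a_j}{j!}y^j .
\]
All coefficients here lie in $\mathbb{Z}_{(p)}$, because the denominators have size $<p$.

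\emph{The functional equation.} Put $F(y)=\sum_{j\le p-1}a_jy^j/j!$. Then $n![y^n]\log F=n![y^n]\log F_{n-1}+a_n$. Substituting the three terms into \eqref{eq:ABC} gives
\[
n!\,[y^n]\log F(y)\equiv -n\,a_{n-1}=-n!\,[y^n]\bigl(yF(y)\bigr)\pmod p
\]
for $1\le n\le p-1$. Hence $\log F\equiv -yF$ modulo $(p,y^p)$ coefficientwise, that is, $F\equiv e^{-yF}$. As a sanity check, $n=2$ gives $a_2\equiv3$, which matches a direct comparison of the $x^{q+2}$ coefficients.

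\emph{Solving the equation.} The series equation $G=e^{-yG}$ has a unique solution over $\mathbb{Q}$, and by Lagrange inversion (tree-function form)
\[
[y^n]G=\frac{(-1)^n(n+1)^{n-1}}{n!}.
\]
Both $G$ and $F$ are determined recursively, degree by degree, by this equation, using only denominators that are units for degrees $<p$. An induction on $n$ therefore gives $a_n/n!\equiv[y^n]G$, hence $a_n\equiv(-1)^n(n+1)^{n-1}\pmod p$.

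\emph{Main obstacle.} The delicate point is the identification of the $B$-term with $n![y^n]\log F_{n-1}$. This requires checking that $\frac{(q-1)!}{(q-M)!}\equiv(-1)^{M-1}(M-1)!$ reproduces exactly the coefficients of $\log(1+u)$, and that no non-carry-free or $d(\eta)<e$ terms survive modulo $p$. The second fact is what Lemma~\ref{lem:carry-depth} provides, since $p\nmid n$.
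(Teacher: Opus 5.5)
Your proof is correct. It ends exactly where the paper's does: the congruence $\log F\equiv -yF \pmod{(p,y^p)}$, followed by Lagrange inversion for the tree function. You reach that congruence by a different route, though.

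The paper never opens up the $A$--$B$--$C$ recursion here. It takes logarithms of $g(x^q)=g(x)^q(1+qxg(x))$, notes that $\log g(x^q)$ has no $x^n$ term for $1\le n\le p-1$, and divides by $q$. The only arithmetic input is that $q^{m-1}/m\in p\Zp$ for $2\le m<p$, so $\frac1q\log(1+qxg)\equiv xg$.

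You instead evaluate the three terms of \eqref{eq:ABC} separately. The $q\log g$ piece comes from $\frac1q\binom qM=\frac1M\binom{q-1}{M-1}\equiv\frac{(-1)^{M-1}}{M}$, which is your $\Xi_q(M)$ computation. The $\frac1q\log(1+qxg)$ piece comes from Proposition~\ref{prop:C-global}(ii).

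The paper's argument is shorter and needs none of the Section~2 machinery. It also transfers verbatim to $r\ge1$, where it is reused in Lemma~\ref{lem:low-index-pe}. Yours keeps everything inside the coefficient recursion used in the rest of the paper. It also shows that the logarithm here comes from the same $\Xi_q$ mechanism that produces the cumulants in Proposition~\ref{prop:vector-B}; your computation is essentially the $d=0$ case of that expansion.

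One simplification: the carry-depth discussion you flag as the main obstacle is unnecessary. For $n<p$, every $\gamma_B(\eta)$ equals $n!\,\Xi_q(M)/\prod_{j\ge1}\eta_j!\,(j!)^{\eta_j}$ exactly, with unit denominators and $M\le n<p$. So no terms need to be discarded, and in fact all of them are units.

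Finally, working with $G=e^{-yG}$ rather than $U=xe^{-U}$ spares you the paper's separate check that $[x^p]\widetilde U\in p\Zp$. This is because $[y^{p-1}]G=p^{p-2}/(p-1)!$ already has a unit denominator.
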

\begin{proof}
Let $g(x)=\sum_{n\ge0}a_nx^n/n!$, so $f(x)=xg(x)$ and $g(0)=1$.  The B\"ottcher equation is
\[
        g(x^q)=g(x)^q(1+qxg(x)).
\]
Taking logarithms gives
\[
        \log g(x^q)=q\log g(x)+\log(1+qxg(x)).
\]
For $1\le n\le p-1$, the left side has zero $x^n$-coefficient.  Divide the coefficient relation by $q$.  Since
\[
        \frac1q\log(1+qxg(x))=xg(x)+\sum_{m\ge2}(-1)^{m-1}\frac{q^{m-1}}{m}x^mg(x)^m
\]
and $m\le n<p$ in the relevant terms, all summands with $m\ge2$ vanish modulo $p$.  Hence
\[
        [x^n]\log g(x)\equiv-[x^{n-1}]g(x)\pmod p.
\]
Let $F_{p-1}(x)=\sum_{n=0}^{p-1}a_nx^n/n!$.  We get
\begin{equation}\label{eq:first-block-log}
        \log F_{p-1}(x)\equiv -xF_{p-1}(x)\pmod{(p,x^p)}.
\end{equation}
Set $U=xF_{p-1}(x)$.  Then
\[
        U\equiv xe^{-U}\pmod{(p,x^{p+1})}.
\]
This congruence determines the coefficients of $U$ recursively up to degree $p$: the right-hand side has linear term $x$, and the coefficient of $x^m$ depends only on the lower coefficients of $U$.  Let $\widetilde U\in x\Qp[[x]]$ be the characteristic-zero solution of
\[
        \widetilde U=xe^{-\widetilde U}.
\]
Then $U\equiv \widetilde U\pmod{(p,x^{p+1})}$.  By Lagrange inversion over $\Qp$ \cite{Gessel,SuryaWarnke},
\[
        [x^m]\widetilde U=\frac1m[u^{m-1}]e^{-mu}=\frac{(-m)^{m-1}}{m!}\qquad(m\ge1).
\]
For $1\le m\le p-1$ this is $p$-integral.  For $m=p$, the coefficient is $p^{p-1}/p!=p^{p-2}/(p-1)!\in p\Zp$, so it vanishes modulo $p$.  Dividing by $x$ gives the desired formula for $a_n/n!$ for $0\le n\le p-1$.
\end{proof}

\begin{corollary}\label{cor:log-Fa}
For $1\le a\le p-1$,
\[
        \log F_a(y)\equiv -yF_a(y)\pmod{(p,y^{a+1})},
\]
and therefore
\[
        a![y^a]\log F_a(y)\equiv -a\,a_{a-1}\pmod p.
\]
\end{corollary}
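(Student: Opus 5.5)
The plan is to deduce both congruences from Proposition~\ref{prop:first-block}, which already gives the truncated functional equation $\log F_{p-1}(x)\equiv -xF_{p-1}(x)\pmod{(p,x^p)}$ in \eqref{eq:first-block-log}.

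\emph{Truncation step.} Since $F_a(y)$ is the truncation of $F_{p-1}(y)$ at degree $a$, and $a\le p-1$, we have $F_{p-1}\equiv F_a\pmod{y^{a+1}}$. Both $F_{p-1}$ and $F_a$ have constant term $a_0=1$. Their coefficients $a_j/j!$ with $j\le p-1$ are $p$-integral. Hence the coefficients of $\log F_a$ up to degree $a<p$ are $p$-integral: the series $\log(1+u)=\sum(-1)^{m-1}u^m/m$ only needs $m\le a<p$ there, so the denominators $m$ are units. The coefficient of $y^j$ in $\log F$, for $j\le a$, depends only on the coefficients of $F$ up to degree $j$. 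Therefore
\[
\log F_a\equiv\log F_{p-1}\pmod{y^{a+1}},
\qquad
yF_a\equiv yF_{p-1}\pmod{y^{a+1}},
\]
and reducing \eqref{eq:first-block-log} modulo $(p,y^{a+1})$, which is a coarser ideal than $(p,y^p)$ because $a+1\le p$, gives the first assertion.

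\emph{Extraction step.} Take the coefficient of $y^a$ on both sides of the first assertion:
\[
[y^a]\log F_a(y)\equiv -[y^{a-1}]F_a(y)=-\frac{a_{a-1}}{(a-1)!}\pmod p.
\]
Both sides are $p$-integral, so we may multiply by $a!$, which is a $p$-adic unit since $a\le p-1$. This gives
\[
a!\,[y^a]\log F_a(y)\equiv -\frac{a!}{(a-1)!}\,a_{a-1}=-a\,a_{a-1}\pmod p.
\]

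\emph{Main obstacle.} The only delicate point is justifying that reducing a logarithm modulo $p$ is legitimate. This needs every denominator in the degree-$\le a$ part of $\log F_a$ to be a $p$-adic unit. The denominators come from the $1/m$ in the logarithm series with $m\le a<p$, and from the factorials $j!$ with $j\le a<p$, so all of them are units. No further input is needed beyond Proposition~\ref{prop:first-block}.
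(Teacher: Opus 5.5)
Your proposal is correct and follows the paper's proof: you use $F_a\equiv F_{p-1}\pmod{y^{a+1}}$ to truncate \eqref{eq:first-block-log}, then take the $y^a$ coefficient and multiply by $a!$. The integrality check you add for the low-degree coefficients of $\log F_a$ is a detail the paper leaves implicit.
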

\begin{proof}
Since $F_a(y)-F_{p-1}(y)=\bigo(y^{a+1})$, \eqref{eq:first-block-log} gives the first congruence after truncation.  Taking the $y^a$ coefficient gives
\[
        [y^a]\log F_a(y)\equiv-[y^{a-1}]F_a(y)=-\frac{a_{a-1}}{(a-1)!}\pmod p.
\]
\end{proof}

\begin{lemma}[Induction-to-block transfer]\label{lem:block-transfer}
Assume that \eqref{eq:digit-formula-aa} is known for every index $N(\beta)+j$ satisfying either
\[
        0\le j<a\ \text{ and }\ |\beta|\le s,
        \qquad\text{or}\qquad
        j=a\ \text{ and }\ |\beta|<s.
\]
Then, for every nonzero digit vector $\beta$ with $|\beta|<s$,
\[
        H_\beta(y)\equiv (-D)^{|\beta|}F_a(y)\pmod p,
        \qquad
        D:=1+y\frac{d}{dy}.
\]
Consequently, if $d$ has weight $s$, then
\[
        \widetilde H_d(y)\equiv (-D)^sF_a(y)+(-1)^{s-1}(a+1)^s a_a\frac{y^a}{a!}\pmod p.
\]
\end{lemma}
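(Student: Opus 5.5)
This is a coefficientwise computation: I will compare $y^j$-coefficients for $0\le j\le a$ and use the hypothesis \eqref{eq:digit-formula-aa} for each index that occurs.

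\emph{Action of $D$.} Since $D$ multiplies $y^j$ by $(j+1)$, we have
\[
        [y^j](-D)^{|\beta|}F_a(y)=(-1)^{|\beta|}(j+1)^{|\beta|}\frac{a_j}{j!}.
\]

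\emph{Coefficients of $H_\beta$.} We have $[y^j]H_\beta=a_{N(\beta)+j}/j!$. The index $N(\beta)+j$ with $0\le j\le a\le p-1$ has units digit $j$ and higher digits given by $\beta$. Its higher digit sum is therefore $|\beta|$. Whenever \eqref{eq:digit-formula-aa} is known for this index, it gives
\[
        a_{N(\beta)+j}\equiv(-1)^{|\beta|}(j+1)^{|\beta|}a_j \pmod p,
\]
and this matches the coefficient of $(-D)^{|\beta|}F_a$ term by term.

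\emph{First claim.} For $\beta$ nonzero with $|\beta|<s$, every $j$ with $0\le j\le a$ is covered by the hypothesis: the case $j<a$ falls under the first clause (since $|\beta|\le s$) and the case $j=a$ under the second. Summing the coefficient identities over $j$ gives
\[
        H_\beta\equiv(-D)^{|\beta|}F_a \pmod p.
\]
All the quantities are $p$-integral: $j!$ is a unit because $j<p$, and the $a_n$ are integral by \cite[Theorem~3(b)]{SalernoSilverman}, as recalled at the start of Section~\ref{sec:recursion}.

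\emph{Second claim.} For $d$ of weight $s$, the polynomial $\widetilde H_d$ involves only the indices $N(d)+j$ with $j<a$. These are covered by the first clause, since $|d|=s\le s$. Hence
\[
        \widetilde H_d\equiv\sum_{j=0}^{a-1}(-1)^s(j+1)^s\frac{a_j}{j!}y^j \pmod p.
\]
This is $(-D)^sF_a$ minus its top term $(-1)^s(a+1)^s a_a\,y^a/a!$, which is exactly the stated correction term $(-1)^{s-1}(a+1)^sa_a\,y^a/a!$.

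There is no real obstacle. The only points needing care are checking that the index bookkeeping lands exactly in the two hypothesis clauses, and confirming that no denominator divisible by $p$ appears when working modulo $p$.
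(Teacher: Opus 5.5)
Your proposal is correct and matches the paper's proof. Both compare coefficients using $D y^j=(j+1)y^j$. For $|\beta|<s$, the first hypothesis clause covers $j<a$ and the second covers $j=a$, and the missing top term $j=a$ gives the correction in $\widetilde H_d$. You also note explicitly that $j!$ is a $p$-adic unit because $j\le a\le p-1$, which the paper leaves implicit.
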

\begin{proof}
For $|\beta|<s$, the displayed hypothesis gives
\[
        a_{N(\beta)+j}\equiv (-1)^{|\beta|}(j+1)^{|\beta|}a_j\pmod p
        \qquad(0\le j\le a).
\]
Summing over $j$ gives the first assertion because $D^mF_a(y)=\sum_{j=0}^a(j+1)^ma_jy^j/j!$.

Now let $d$ have weight $s$.  For $0\le j<a$, the first part of the hypothesis applies with $|d|=s$ and gives
\[
        a_{N(d)+j}\equiv (-1)^s(j+1)^sa_j\pmod p.
\]
Summing only over $0\le j<a$ therefore yields the displayed formula for $\widetilde H_d(y)$; the missing top term $j=a$ contributes exactly
\[
        (-1)^{s-1}(a+1)^sa_a\frac{y^a}{a!}.
\]
\end{proof}

For nonzero digit vectors define
\[
        U_\beta(y):=(-1)^{|\beta|}\frac{D^{|\beta|}F_a(y)}{F_a(y)}.
\]
Introduce variables $t=(t_1,\ldots,t_L)$ and write $t^\beta=\prod_i t_i^{\beta_i}$.  Define cumulants $K_d(U)$ by
\begin{equation}\label{eq:cumulants}
        \log\left(1+\sum_{\beta\ne0}U_\beta(y)\frac{t^\beta}{\beta!}\right)
        =\sum_{d\ne0}K_d(U)\frac{t^d}{d!}.
\end{equation}

\begin{proposition}[Collapse to one variable]\label{prop:cumulant-collapse}
Let $d$ be a nonzero digit vector and set $s=|d|$.  Then
\[
        a![y^a]K_d(U)=(-1)^s a![y^a]N^s\log F_a(y),
        \qquad
        N:=y\frac{d}{dy}.
\]
Consequently,
\[
        a![y^a]K_d(U)\equiv (-1)^{s+1}a^{s+1}a_{a-1}\pmod p.
\]
\end{proposition}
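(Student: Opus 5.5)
The plan is to reduce the multivariate cumulant generating function to a one-variable exponential structure. The key observation is that $U_\beta$ depends on $\beta$ only through its weight $|\beta|$.

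\textbf{Step 1: an exponential generating form.} Write $U_\beta=(-1)^{|\beta|}D^{|\beta|}F_a/F_a$ with $D=1+N$. Then
\[
1+\sum_{\beta\ne0}U_\beta\frac{t^\beta}{\beta!}
=\frac1{F_a(y)}\sum_{\beta}\frac{t^\beta}{\beta!}(-D)^{|\beta|}F_a(y).
\]
Read this as the truncation, to digits $\beta_i\le p-1$, of the formal expression $\exp\bigl(-(t_1+\cdots+t_L)D\bigr)F_a/F_a$. Working over $\Qp$ with the full exponential series in $t$, the operator $e^{-\tau D}$ with $\tau=t_1+\cdots+t_L$ acts on $y^j$ by $y^j\mapsto e^{-\tau(1+j)}y^j$. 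Hence
\[
e^{-\tau D}F_a(y)=e^{-\tau}F_a(ye^{-\tau}).
\]
Taking logarithms gives
\[
\log\Bigl(\frac{e^{-\tau D}F_a}{F_a}\Bigr)=-\tau+\log F_a(ye^{-\tau})-\log F_a(y)=-\tau+\bigl(e^{-\tau N}-1\bigr)\log F_a(y).
\]
Here we use that $e^{-\tau N}$ is the substitution $y\mapsto ye^{-\tau}$, which commutes with $\log$.

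\textbf{Step 2: extract the cumulants.} The coefficient of $t^d/d!$ in a function of $\tau$ alone equals the $|d|$-th $\tau$-derivative at $0$, since $\partial^d_t g(\tau)=g^{(|d|)}(\tau)$. So $K_d=(-1)^sN^s\log F_a$ for $s\ge2$. For $s=1$ we get $K_d=-1-N\log F_a$, and the constant $-1$ is killed by $[y^a]$ because $a\ge1$. Applying $a![y^a]$ then yields the displayed identity.

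The main obstacle is justifying the use of the untruncated exponential. The cumulant $K_d$ with all $d_i\le p-1$ is a polynomial in the $U_\beta$ with $\beta\le d$ componentwise. Such $\beta$ automatically have digits $\le p-1$, so truncating the series in $t$ to digit vectors does not change $K_d$. The coefficients of the full series $\sum_\beta t^\beta(-D)^{|\beta|}F_a/\beta!$ with $\beta\le d$ coincide exactly with the $U_\beta$. The identity is therefore an exact identity over $\Qp[[y]]$, and it needs no congruence. One also needs $F_a(0)=1$ so that the logarithms make sense.

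\textbf{Step 3: the congruence.} Since $N$ acts on $y^a$ by multiplication by $a$, we have $a![y^a]N^s\log F_a=a^s\,a![y^a]\log F_a$. By Corollary~\ref{cor:log-Fa} this is $\equiv -a^{s+1}a_{a-1}\pmod p$. Multiplying by $(-1)^s$ gives $(-1)^{s+1}a^{s+1}a_{a-1}$.

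Finally, $p$-integrality of $a![y^a]\log F_a$ must be checked before reducing mod $p$. The $a_j$ are $p$-integral, $j!$ is a unit for $j\le a<p$, and the logarithm coefficients up to degree $a<p$ have unit denominators. So the reduction is legitimate.
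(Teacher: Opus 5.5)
Your proposal is correct and follows essentially the same route as the paper's proof: enlarge the block set to all $\beta\le d$, recognize the series as $e^{-TD}F_a/F_a$ with $T=t_1+\cdots+t_L$, use $D=1+N$ and the substitution $y\mapsto ye^{-T}$, read off the coefficient of $T^{s}/s!$, discard the contribution of the linear term $-T$, and finish with Corollary~\ref{cor:log-Fa}. The explicit treatment of the case $s=1$ and the $p$-integrality check for $a![y^a]\log F_a$ are small additions that the paper leaves implicit.
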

\begin{proof}
To compute the coefficient of the fixed monomial $t^d$, we may enlarge the block set from nonzero digit vectors $\beta\in\{0,\ldots,p-1\}^L$ to all nonzero $\beta\in\mathbb Z_{\ge0}^L$, because any term with some $\beta_i>d_i$ cannot contribute to $t^d$.  Since $0\le d_i\le p-1$, this enlargement does not change the coefficient under consideration.

After this harmless enlargement, $U_\beta$ depends only on $|\beta|$, so the series in \eqref{eq:cumulants} depends only on $T=t_1+\cdots+t_L$:
\[
        1+\sum_{\beta\ne0}U_\beta\frac{t^\beta}{\beta!}
        =\sum_{m\ge0}(-1)^m\frac{D^mF_a(y)}{F_a(y)}\frac{T^m}{m!}
        =\frac{e^{-TD}F_a(y)}{F_a(y)}.
\]
Since $D=1+N$ and $e^{-TN}F_a(y)=F_a(ye^{-T})$, the logarithm is
\[
        -T+\log F_a(ye^{-T})-\log F_a(y).
\]
The coefficient of $t^d/d!$ in a series depending only on $T$ is the coefficient of $T^{|d|}/|d|!$.  Hence the first formula follows.  The linear term $-T$ has no $y^a$ coefficient.  Finally $[y^a]N^s\log F_a=a^s[y^a]\log F_a$, and Corollary~\ref{cor:log-Fa} gives the congruence.
\end{proof}

\section{Proof of \texorpdfstring{Theorems~\ref{thm:digit-sum} and~\ref{thm:SS-special}}{Theorems 1.1 and 1.2}}\label{sec:proof-main-special}
\begin{proof}[Proof of \eqref{eq:digit-formula-aa}]
The case $a=0$ is Theorem~\ref{thm:a-zero}.  For $1\le a\le p-1$ we argue by outer induction on $a$ and inner induction on the higher digit weight $s$.

Fix $a$ and assume the theorem is known for smaller residue classes.  If $s=0$, then $k=a$ and the claim is tautological.  Let $s\ge1$, let $d$ be a digit vector of weight $s$, and put $k=N(d)+a$.  By Proposition~\ref{prop:C-global} and the already known formula in the residue class $a-1$,
\begin{equation}\label{eq:C-in-main}
        C_k[x^k]\equiv a\,a_{k-1}\equiv (-1)^s a^{s+1}a_{a-1}\pmod p.
\end{equation}
By Proposition~\ref{prop:A-zero},
\begin{equation}\label{eq:A-in-main}
        A_k[x^k]\equiv0\pmod p.
\end{equation}

For the $B$-term, put $V_\beta(y)=H_\beta(y)/F_a(y)$, and let $K_d(V)$ be defined by \eqref{eq:cumulants} with $U_\beta$ replaced by $V_\beta$.  Since the one-block partition contributes $V_d$, Proposition~\ref{prop:vector-B} gives
\begin{equation}\label{eq:B-cumulant-main}
        B_k[x^k]\equiv a![y^a]\left(\frac{\widetilde H_d(y)}{F_a(y)}+K_d(V)-V_d(y)\right)\pmod p.
\end{equation}
Every block in a proper partition of $d$ has weight smaller than $s$.  Therefore Lemma~\ref{lem:block-transfer}, applied with the outer induction hypothesis on the residue class and the inner induction hypothesis on the higher digit weight, allows us to replace $V_\beta$ by $U_\beta$ in the proper-partition contribution:
\[
        a![y^a](K_d(V)-V_d)\equiv a![y^a](K_d(U)-U_d)\pmod p.
\]
Moreover Lemma~\ref{lem:block-transfer} gives
\[
        a![y^a]\left(\frac{\widetilde H_d(y)}{F_a(y)}-U_d(y)\right)
        \equiv (-1)^{s-1}(a+1)^s a_a\pmod p.
\]
Together with Proposition~\ref{prop:cumulant-collapse}, this yields
\begin{equation}\label{eq:B-main-special}
        B_k[x^k]\equiv
        (-1)^{s-1}(a+1)^sa_a+(-1)^{s+1}a^{s+1}a_{a-1}\pmod p.
\end{equation}
Substituting \eqref{eq:C-in-main}, \eqref{eq:A-in-main}, and \eqref{eq:B-main-special} into \eqref{eq:ABC}, we get
\[
\begin{aligned}
        a_k
        &\equiv-\bigl((-1)^{s-1}(a+1)^sa_a+(-1)^{s+1}a^{s+1}a_{a-1}\bigr)
        -(-1)^s a^{s+1}a_{a-1}       \\
        &\equiv (-1)^s(a+1)^sa_a\pmod p.
\end{aligned}
\]
The two inductions are complete.
\end{proof}

\begin{proof}[Proof of \eqref{eq:closed-digit-formula}]
If $a=0$, then \eqref{eq:digit-formula-aa} gives $a_k\equiv(-1)^s\pmod p$, which is \eqref{eq:closed-digit-formula}.  If $1\le a\le p-1$, then \eqref{eq:digit-formula-aa} and Proposition~\ref{prop:first-block} give
\[
        a_k\equiv (-1)^s(a+1)^sa_a
        \equiv (-1)^{s+a}(a+1)^{s+a-1}\pmod p.
\]
\end{proof}

\begin{proof}[Proof of Theorem~\ref{thm:SS-special}]
The congruence $a_{pm}\equiv(-1)^m\pmod p$ is Theorem~\ref{thm:a-zero}.  For the second congruence, write
\[
        pm-1=(m-1)p+(p-1),
        \qquad
        s:=\Sp(m-1).
\]
If $m=1$, Proposition~\ref{prop:first-block} gives $a_{p-1}\equiv0\pmod p$.  If $m\ge2$, then $s\ge1$, and \eqref{eq:digit-formula-aa} gives
\[
        a_{pm-1}\equiv(-1)^sp^sa_{p-1}\equiv0\pmod p.
\]
For the third congruence, write
\[
        pm-2=(m-1)p+(p-2),
        \qquad
        s:=\Sp(m-1).
\]
Then \eqref{eq:digit-formula-aa} gives
\[
        a_{pm-2}\equiv(-1)^s(p-1)^sa_{p-2}\equiv a_{p-2}\pmod p,
\]
and Proposition~\ref{prop:first-block} gives $a_{p-2}\equiv-1\pmod p$.
\end{proof}

\section{The family \texorpdfstring{$\varphi_{r,e}$}{phi r e} for \texorpdfstring{$r\ge1$}{r >= 1}}\label{sec:higher-recursion}
Fix $r\ge1$ and $e\ge2$, and put $q=p^e$.  We now write $a_k(r)=a_k(r,e)$.  Since
\[
        \varphi_{r,e}(x)=x^q+qp^rx^{q+1}\in x^q+qx^{q+1}\Zp[\![x]\!],
\]
\cite[Theorem~3(b)]{SalernoSilverman} gives $a_k(r)\in\Zp$ for every $k$.  Comparing the coefficient of $x^{q+1}$ in the B\"ottcher equation
\[
        \varphi_{r,e}\bigl(f_{r,e}(x)\bigr)=f_{r,e}(x^q)
\]
immediately gives
\begin{equation}\label{eq:a1-r}
        a_1(r)=-p^r,
\end{equation}
since the left-hand side contributes $q a_1(r)+q p^r$ to $x^{q+1}$, while the right-hand side has no $x^{q+1}$ term.  The recursion is
\begin{equation}\label{eq:ABC-r}
        a_k(r)=A_k[x^k]-B_k[x^k]-p^rC_k[x^k],
\end{equation}
where $A_k,B_k,C_k$ are the same expressions as in Section~\ref{sec:recursion}, with $a_\ell$ replaced by $a_\ell(r)$.  Thus all scalar coefficient formulas for $A$, $B$, and $C$ remain valid.  In particular, \eqref{eq:B-cd} becomes
\[
        \vp(\gamma_B(\eta))=c(\eta)+d(\eta)-e,
\]
and all $B$-coefficients are $p$-integral by Lemma~\ref{lem:B-integrality}.  If $p\mid k$, every coefficient of $C_k[x^k]$ is divisible by $p$ by Proposition~\ref{prop:C-global}.

We begin with a low-index bound for the initial pure-power levels.

\begin{lemma}[Low-index bound]\label{lem:low-index-pe}
For $r\ge1$, $e\ge2$, and $0\le j\le p-1$,
\[
        \vp\bigl(a_j(r)\bigr)\ge jr.
\]
Consequently, any product of coefficients whose lower indices have total sum $N<p$ has valuation at least $rN$.
\end{lemma}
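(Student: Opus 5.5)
We would prove the bound by repeating the logarithmic argument of Proposition~\ref{prop:first-block}, after a rescaling that absorbs every power of $p^r$. Put $g(x)=\sum_{n\ge0}a_n(r)x^n/n!$, so $f_{r,e}=xg$ and $g(0)=1$. The B\"ottcher equation $\varphi_{r,e}(f_{r,e}(x))=f_{r,e}(x^q)$ then reads
\[
        g(x^q)=g(x)^q\bigl(1+qp^rxg(x)\bigr).
\]
Since $j\le p-1<q$, the left side has no terms in degrees $1,\dots,p-1$. Taking logarithms and working modulo $x^p$, we therefore get
\[
        \log g(x)\equiv-\frac1q\log\bigl(1+qp^rxg(x)\bigr)\pmod{x^p}.
\]

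Next we rescale. Set $y=p^rx$ and $G(y):=g(y/p^r)$, so that
\[
        [y^j]G=\frac{a_j(r)}{p^{rj}\,j!}.
\]
The congruence becomes the $r$-free relation
\[
        G(y)\equiv\exp\Bigl(-\sum_{m\ge1}\frac{(-1)^{m-1}q^{m-1}}{m}\,y^mG(y)^m\Bigr)\pmod{y^p}.
\]
We then show by induction on $j\le p-1$ that $[y^j]G\in\Zp$.

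For the inductive step, only terms with $m\le j<p$ enter the coefficient of $y^j$, and for these $q^{m-1}/m\in\Zp$. The inner series $h(y)=-\sum_m(\cdots)$ has zero constant term. Its coefficients through degree $j$ involve only $[y^i]G$ with $i\le j-1$, so they are integral by induction. In $\exp(h)=\sum_n h^n/n!$ only $n\le j<p$ contributes to degree $j$, and $1/n!$ is a unit for these $n$. Hence $[y^j]G\in\Zp$.

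Since $j!$ is a unit for $j<p$, integrality of $[y^j]G$ gives $\vp(a_j(r))\ge rj$. The consequence for products is then additivity of valuations: a product $\prod a_{j_i}(r)$ with $\sum j_i=N<p$ has every $j_i<p$, so its valuation is at least $r\sum j_i=rN$.

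The only delicate point is checking that no denominator divisible by $p$ enters below degree $p$, namely from $1/m$ in the logarithm or from $1/n!$ in the exponential. This is exactly why the statement stops at $j\le p-1$; it is the same restriction used in Proposition~\ref{prop:first-block}.
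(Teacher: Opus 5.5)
Your proof is correct and follows essentially the same route as the paper. Both arguments take logarithms of $g(x^q)=g(x)^q(1+qp^rxg(x))$, use that $\log g(x^q)$ has no terms in degrees $1,\dots,p-1$, and induct on $j\le p-1$ using that the denominators $m$ and $n!$ are $p$-adic units below $p$. The differences are only in packaging: your substitution $y=p^rx$ absorbs the weight-$r$ bookkeeping that the paper does by hand (it bounds the $m\ge2$ terms by $(m-1)e+mr+r(n-m)$ and reads off $a_n/n!$ from $[x^n]\log g_r$), and you recover $G$ through $\exp$ rather than inverting the logarithm coefficientwise.
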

\begin{proof}
Let
\[
        g_r(x):=\sum_{j\ge0}\frac{a_j(r)}{j!}x^j,
\]
so that $f_{r,e}(x)=xg_r(x)$ and $g_r(0)=1$.  The B\"ottcher equation becomes
\[
        g_r(x^q)=g_r(x)^q\bigl(1+qp^r xg_r(x)\bigr),\qquad q=p^e.
\]
Taking logarithms gives
\[
        \log g_r(x^q)=q\log g_r(x)+\log\bigl(1+qp^r xg_r(x)\bigr).
\]
Fix $1\le n\le p-1$.  The coefficient of $x^n$ on the left side is $0$.  Dividing the coefficient relation by $q$ therefore gives
\[
        [x^n]\log g_r(x)=-p^r[x^{n-1}]g_r(x)+E_n,
\]
where $E_n$ is a sum of terms coming from
\[
        \sum_{m\ge2}(-1)^{m-1}\frac{q^{m-1}p^{mr}}{m}x^m g_r(x)^m.
\]
We prove the bound by induction on $n$.  The case $n=1$ is \eqref{eq:a1-r}.  Assume it for all smaller indices.  A monomial contributing to the coefficient of $x^{n-m}$ in $g_r(x)^m$ is a product of lower coefficients whose total index is $n-m$, so by the induction hypothesis it has valuation at least $r(n-m)$.  Since $m\le n<p$, the denominator $m$ is a $p$-adic unit.  Therefore every summand of $E_n$ has valuation at least
\[
        (m-1)e+mr+r(n-m)=rn+(m-1)e\ge rn+2.
\]
On the other hand, the coefficient $[x^n]\log g_r(x)$ is $a_n(r)/n!$ plus a polynomial in $a_1(r),\ldots,a_{n-1}(r)$.  Each monomial in that polynomial has total lower index $n$, hence valuation at least $rn$.  Therefore the displayed relation gives
\[
        \vp\!\left(\frac{a_n(r)}{n!}\right)\ge rn.
\]
Since $n!\in\Zp^\times$ for $n<p$, the claim follows.
\end{proof}

\begin{lemma}[The first two pure powers]\label{lem:first-two-pure-pe}
For $r\ge1$ and $e\ge2$,
\[
        v_0(r,e)=r,
        \qquad
        v_1(r,e)=pr,
\]
and
\[
        p^{-v_i(r,e)}a_{p^i}(r,e)\equiv -1\pmod p
        \qquad (i=0,1).
\]
\end{lemma}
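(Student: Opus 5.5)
The case $i=0$ is immediate from \eqref{eq:a1-r}: $a_{p^0}(r)=a_1(r)=-p^r$, so $v_0=r$ and $p^{-r}a_1(r)=-1$ exactly. The substance is $i=1$. The plan is to evaluate the recursion \eqref{eq:ABC-r} at $k=p$ and compare the valuations of the three pieces.

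\emph{The $A$- and $C$-terms.} Since $p<q$, we have $q\nmid p$, so $A_p[x^p]=0$. For the $C$-term, every monomial in $C_p[x^p]$ has lower indices summing to $p-1<p$. Lemma~\ref{lem:low-index-pe} therefore gives it valuation at least $r(p-1)$. The scalar coefficients are $p$-integral, and in fact divisible by $p$, by Proposition~\ref{prop:C-global}(i). Hence
\[
\vp\bigl(p^rC_p[x^p]\bigr)\ge r+r(p-1)+1=pr+1,
\]
so the $C$-term only affects the remainder $\bigo(p^{pr+1})$.

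\emph{The $B$-term.} For $B_p[x^p]$, split the monomials $a^\eta$ according to whether the scalar $\gamma_B(\eta)$ is a unit. By Proposition~\ref{prop:B-unit-divisible}, the only unit monomial contains an index not divisible by $p$. The alternative would be the index $p$ itself, which is excluded because $B_p$ only involves $\ell<p$. That unit monomial is $a_0^{q-p}a_1(r)^p$. Its scalar is $\gamma_p\equiv-1\pmod p$, computed exactly as in Lemma~\ref{lem:first-a1-ap}. Its contribution is therefore
\[
\gamma_p(-p^r)^p=-\gamma_p p^{pr}\equiv p^{pr}\pmod{p^{pr+1}},
\]
using that $p$ is odd. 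Every other monomial has all indices $<p$ with total index $p$. Its scalar has valuation $\ge1$ by Lemma~\ref{lem:B-integrality}, since it is integral and not a unit. Its coefficient part has valuation $\ge rp$ by Lemma~\ref{lem:low-index-pe}, where we apply the lemma to indices $\le p-1$ and add valuations, because only the total index sum matters. Altogether each such term has valuation $\ge pr+1$.

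\emph{Conclusion and the main obstacle.} Combining the three pieces,
\[
a_p(r)=-B_p[x^p]+\bigo(p^{pr+1})=-p^{pr}+\bigo(p^{pr+1}).
\]
This gives $v_1=pr$ and $p^{-v_1}a_p(r)\equiv-1\pmod p$. The only delicate point is justifying the bound $\ge pr$ for products whose total index equals exactly $p$. Lemma~\ref{lem:low-index-pe} is stated for total sum $N<p$, but its proof bounds each factor $a_j$ with $j\le p-1$ individually by $jr$. Since every factor here has index $j\le p-1$, additivity of valuations gives the bound $rp$ directly, and I would phrase that step explicitly.
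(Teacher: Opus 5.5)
Your proof is correct and follows the paper's argument essentially step for step. You evaluate \eqref{eq:ABC-r} at $k=p$, note $A_p[x^p]=0$, bound $p^rC_p[x^p]$ below by $pr+1$ via Proposition~\ref{prop:C-global}(i) and Lemma~\ref{lem:low-index-pe}, and isolate $a_0^{q-p}a_1(r)^p$ as the unique unit-scalar $B$-monomial, with scalar $\equiv -1 \pmod p$. You also make explicit a point the paper passes over: the non-unit $B$-monomials have total index exactly $p$, so they are bounded by the per-factor assertion $\vp(a_j(r))\ge jr$ for $j\le p-1$, which is the first part of Lemma~\ref{lem:low-index-pe} itself, rather than by its ``total sum $N<p$'' consequence.
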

\begin{proof}
The case $i=0$ is \eqref{eq:a1-r}.  For $i=1$, take $k=p$ in \eqref{eq:ABC-r}.  The $A$-term is $0$ because $q\nmid p$.  By Proposition~\ref{prop:C-global} and Lemma~\ref{lem:low-index-pe}, the term $p^rC_p[x^p]$ has valuation at least $pr+1$: every monomial coefficient of $C_p[x^p]$ is divisible by $p$, and its monomials are products of lower coefficients with total lower index $p-1$.

By Proposition~\ref{prop:B-unit-divisible}, the unique unit-coefficient monomial in $B_p[x^p]$ is $a_0(r)^{q-p}a_1(r)^p$.  Its scalar coefficient is
\[
\begin{aligned}
        \gamma_{1,e}
        &=\frac{p!}{q}\binom{q}{q-p,p}
         =\frac{(q-1)!}{(q-p)!} \\
        &=\prod_{j=1}^{p-1}(q-j)
         \equiv (-1)^{p-1}(p-1)!
         \equiv -1\pmod p.
\end{aligned}
\]
by Wilson's theorem.  All other $B$-monomials have scalar coefficient in $p\Zp$; by Lemma~\ref{lem:low-index-pe} their coefficient-products have valuation at least $pr$, so they contribute only $O(p^{pr+1})$.  Therefore
\[
        a_p(r)\equiv -\gamma_{1,e}a_1(r)^p\equiv -p^{pr}\pmod{p^{pr+1}}.
\]
This gives the case $i=1$.
\end{proof}

\begin{lemma}\label{lem:Tp-power-pe}
For every $m\ge0$,
\[
        \Tp(p^m)\equiv (-1)^m\pmod p.
\]
\end{lemma}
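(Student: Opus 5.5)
The plan is to prove the congruence by induction on $m$, using the standard factorization of $(p^m)!$ into its $p$-free blocks. The case $m=0$ is trivial, since $\Tp(1)=1$. For $m\ge1$, write
\[
        (p^m)!=\prod_{j=1}^{p^m}j
        =\Bigl(\prod_{\substack{1\le j\le p^m\\ p\nmid j}}j\Bigr)\cdot p^{p^{m-1}}\,(p^{m-1})!,
\]
where the second factor collects the multiples $j=pi$ with $1\le i\le p^{m-1}$. Removing all powers of $p$ on both sides gives
\[
        \Tp(p^m)=\Bigl(\prod_{\substack{1\le j\le p^m\\ p\nmid j}}j\Bigr)\,\Tp(p^{m-1}).
\]

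It then suffices to show that the product of the integers in $[1,p^m]$ prime to $p$ is $\equiv-1\pmod p$. These integers split into $p^{m-1}$ consecutive blocks $\{pr+1,\ldots,pr+p-1\}$ for $0\le r\le p^{m-1}-1$. Each block reduces modulo $p$ to $(p-1)!\equiv-1$ by Wilson's theorem. Hence the product is congruent to $(-1)^{p^{m-1}}$, which equals $-1$ because $p$ is odd.

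Combining the two steps gives $\Tp(p^m)\equiv-\Tp(p^{m-1})\equiv(-1)^m\pmod p$ by the induction hypothesis. Alternatively, one could check against the congruence $\nu_i\equiv(-1)^i$ from \eqref{eq:nu-i} in the proof of Theorem~\ref{thm:a-zero}, which gives the same block computation in one step: $(p^m)!=p^{p^{m-1}}(p^{m-1})!\,\nu_{p^{m-1}}$ with $\nu_{p^{m-1}}\equiv(-1)^{p^{m-1}}=-1$. There is no real obstacle here. The only point needing care is the parity $(-1)^{p^{m-1}}=-1$, which is exactly where oddness of $p$ is used.
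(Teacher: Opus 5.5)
Your proof is correct and is essentially the paper's argument. The paper groups the integers in $[1,p^m]$ by $p$-adic valuation all at once, getting $\Tp(p^m)\equiv\prod_{j=0}^{m-1}((p-1)!)^{p^{m-j-1}}\pmod p$; your induction peels off the prime-to-$p$ layer one level at a time and unrolls to the same product, with Wilson's theorem and the oddness of $p$ (giving $(-1)^{p^{m-j-1}}=-1$) as the only inputs in both.
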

\begin{proof}
The assertion is clear for $m=0$.  For $m\ge1$, group the integers from $1$ to $p^m$ according to their $p$-adic valuation.  The prime-to-$p$ parts of the numbers with valuation $j$ contribute, modulo $p$, the product of all nonzero residues modulo $p$, repeated $p^{m-j-1}$ times.  Thus
\[
        \Tp(p^m)\equiv \prod_{j=0}^{m-1}((p-1)!)^{p^{m-j-1}}
        \equiv \prod_{j=0}^{m-1}(-1)^{p^{m-j-1}}
        \equiv (-1)^m\pmod p,
\]
using Wilson's theorem and the fact that $p$ is odd.
\end{proof}

We now isolate the pure-power part of the $B$-term.

\begin{lemma}[Pure-power unit $B$-monomial]\label{lem:pure-power-B-unit}
Let $n\ge1$.  In $B_{p^n}[x^{p^n}]$, the unique monomial with $p$-adic unit scalar coefficient is
\[
        a_0(r)^{q-p}a_{p^{n-1}}(r)^p.
\]
Its scalar coefficient is $\gamma_{n,e}$ and satisfies $\gamma_{n,e}\in\Zp^\times$ and $\gamma_{n,e}\equiv-1\pmod p$.
\end{lemma}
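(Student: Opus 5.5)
Set $k=p^n$ and treat $n=1$ and $n\ge2$ separately, working only with the carry-defect formula \eqref{eq:B-cd}. A monomial $a^\eta$ in $B_{p^n}[x^{p^n}]$ has unit scalar coefficient exactly when $c(\eta)+d(\eta)=e$.

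\emph{The case $n=1$.} This is the first half of Proposition~\ref{prop:B-unit-divisible}. If some occurring positive index is prime to $p$, the only possibility is $a_0^{q-p}a_1^p$. If every positive index is divisible by $p$, then since all indices are below $k=p$, only $\eta_0=q$ remains, which gives total index $0\ne p$. So the unique unit monomial is $a_0^{q-p}a_1^p$, and its scalar was already computed in Lemma~\ref{lem:first-two-pure-pe}.

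\emph{The case $n\ge2$.} I would argue by induction on $n$. By Proposition~\ref{prop:B-unit-divisible}, every unit monomial uses only positive indices divisible by $p$. Write $\eta'$ for the multiplicity vector with $\eta'_m=\eta_{pm}$. Its total index is $p^{n-1}$, and the number of factors is still $q$.

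Both carry defects are unchanged under this rescaling. First, $\Sp(pm)=\Sp(m)$ and $\Sp(p^n)=\Sp(p^{n-1})$, so $c(\eta)=c(\eta')$. Second, the multiplicities are literally the same, so $d(\eta)=d(\eta')$.

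Hence $\eta$ is a unit monomial for $k=p^n$ if and only if $\eta'$ is one for $k=p^{n-1}$. The inductive hypothesis then forces $\eta'=(q-p)e_0+p\,e_{p^{n-2}}$, that is, $\eta=(q-p)e_0+p\,e_{p^{n-1}}$. This argument is purely combinatorial and does not use the actual values of the $a_\ell$.

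\emph{The scalar coefficient.} For this $\eta$ the general formula gives
\[
\gamma_B(\eta)=\frac{(p^n)!}{q}\binom{q}{p}\frac{1}{((p^{n-1})!)^p}.
\]
Using $\frac1q\binom qp=\frac1p\binom{q-1}{p-1}$, this equals $\gamma_{n,e}$ as defined in the introduction. It is a unit because of the characterization just proved: $c+d=e$ for this $\eta$.

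For the residue, split it as
\[
\gamma_{n,e}=\binom{q-1}{p-1}\cdot\frac{(p^n)!}{p\,((p^{n-1})!)^p}.
\]
By Lucas' theorem, $\binom{q-1}{p-1}\equiv\binom{p-1}{p-1}=1\pmod p$.

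For the second factor, look first at valuations: $v_p\bigl((p^n)!\bigr)-p\,v_p\bigl((p^{n-1})!\bigr)=1$, so the explicit power of $p$ is exactly removed. The unit part is then $\Tp(p^n)/\Tp(p^{n-1})^p$. By Lemma~\ref{lem:Tp-power-pe} this is congruent to $(-1)^n/(-1)^{(n-1)p}=(-1)^{n-(n-1)}=-1$, using that $p$ is odd.

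Therefore $\gamma_{n,e}\equiv-1\pmod p$. For $n=1$ this agrees with the Wilson-theorem computation in Lemma~\ref{lem:first-two-pure-pe}.

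The main obstacle is the scaling step. One must check that dividing all indices by $p$ preserves both carry defects exactly, including the $\eta_0$ multiplicity, so that the unit condition transfers without loss between $k=p^n$ and $k=p^{n-1}$.
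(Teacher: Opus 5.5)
Your proof is correct, but the uniqueness step is organized differently from the paper's.

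The paper does not induct on $n$. It takes a unit monomial at level $p^n$ and rules out $d(\eta)=e$ directly: a carry-free sum equal to $p^n$ would need the single index $p^n$, which is not available. For $d(\eta)<e$ it reruns the carry-depth argument, namely Lemma~\ref{lem:carry-depth} together with the inequality $(p^s-1)A\le s(p-1)$. This gives $s=1$, $A=1$, $\Sp(M)=1$, and hence $M=p^{n-1}$ in one step.

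You instead use Proposition~\ref{prop:B-unit-divisible} as a black box to force every positive index into $p\mathbb{Z}$. You then observe that $c(\eta)$ and $d(\eta)$, and hence $\vp(\gamma_B(\eta))$, do not change when all indices and the total are divided by $p$. So the unit monomials of $B_{p^n}[x^{p^n}]$ correspond bijectively to those of $B_{p^{n-1}}[x^{p^{n-1}}]$, and induction reduces everything to $k=p$. Your verification of the rescaling is sound, including the $\eta_0$ term, the index range $m<p^{n-1}$, and existence transported from the base case.

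What each route buys:
\begin{itemize}
\item Your route avoids repeating the carry inequality. It also makes explicit a scaling invariance that the paper uses only tacitly, for instance when it applies \eqref{eq:B-cd} to the rescaled multi-index in Lemma~\ref{lem:unit-carryfree-pe}.
\item The paper's route is self-contained and non-inductive. Its template is exactly what is reused later for divisible non-pure $M$.
\end{itemize}

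Your computation of $\gamma_{n,e}$ and its residue matches the paper's: Lucas' theorem for $\binom{q-1}{p-1}$ and Lemma~\ref{lem:Tp-power-pe} for the factorial ratio. You additionally check explicitly that the valuation of that ratio is exactly $1$.
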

\begin{proof}
Let a unit-coefficient monomial have multiplicities $\eta_i$.  Then $c(\eta)+d(\eta)=e$.  If $d(\eta)=e$, then $c(\eta)=0$, so the weighted addition $\sum_i i\eta_i=p^n$ is carry-free.  Since the target is a pure power, the positive part would then consist of a single index $p^n$ with multiplicity one, but every index occurring in $B_{p^n}[x^{p^n}]$ is $<p^n$.  Hence $d(\eta)<e$.

Put $s=e-d(\eta)\ge1$.  By Lemma~\ref{lem:carry-depth}, we may write $\eta_i=p^s u_i$.  Let
\[
        M:=\sum_i i u_i,
        \qquad
        A:=\sum_i u_i\Sp(i).
\]
Since $c(\eta)=s$, equation \eqref{eq:B-cd} becomes
\[
        p^sA-\Sp(M)=s(p-1).
\]
Because $\Sp(M)\le A$, we obtain
\[
        (p^s-1)A\le s(p-1).
\]
For $s\ge2$ this is impossible, because $p^s-1>s(p-1)$ for odd $p$.  Hence $s=1$.  The same inequality then forces $A=1$ and $\Sp(M)=1$.  Thus exactly one positive index occurs after division by $p$, with multiplicity one and digit sum one.  Since
\[
        p^n=\sum_i i\eta_i=p\sum_i i u_i=pM,
\]
that index must be $M=p^{n-1}$.  Therefore $\eta_{p^{n-1}}=p$ and $\eta_0=q-p$.

The scalar coefficient is
\[
        \gamma_{n,e}
        =\frac{(p^n)!}{p\bigl((p^{n-1})!\bigr)^p}\binom{p^e-1}{p-1}.
\]
The first factor is a $p$-adic unit, and by Lemma~\ref{lem:Tp-power-pe}
\[
        \frac{(p^n)!}{p\bigl((p^{n-1})!\bigr)^p}
        \equiv \frac{\Tp(p^n)}{\Tp(p^{n-1})^p}
        \equiv \frac{(-1)^n}{((-1)^{n-1})^p}
        \equiv -1\pmod p.
\]
Lucas' theorem gives $\binom{p^e-1}{p-1}\equiv1\pmod p$, so $\gamma_{n,e}\equiv-1\pmod p$.
\end{proof}

\begin{proposition}[Exact decomposition on pure powers]\label{prop:pure-decomp}
For every $n\ge1$ there exist polynomials $R_n,S_n\in\Zp[a_1(r),\ldots,a_{p^n-1}(r)]$ such that
\begin{equation}\label{eq:pure-decomp-small}
        a_{p^n}(r)=-\gamma_{n,e}a_{p^{n-1}}(r)^p+pR_n+p^{r+1}S_n
        \qquad(1\le n<e),
\end{equation}
and
\begin{equation}\label{eq:pure-decomp-large}
        a_{p^n}(r)=\alpha_{n,e}a_{p^{n-e}}(r)-\gamma_{n,e}a_{p^{n-1}}(r)^p+pR_n+p^{r+1}S_n
        \qquad(n\ge e).
\end{equation}
Moreover,
\begin{itemize}[leftmargin=2em]
\item every monomial of $R_n$ is a product of lower coefficients whose weighted total index is $p^n$;
\item every monomial of $S_n$ is a product of lower coefficients whose weighted total index is $p^n-1$;
\item all scalar coefficients of $R_n$ and $S_n$ are $p$-integral;
\item
\[
        \vp(\alpha_{n,e})=\Delta_{n,e}=\mu_ep^{n-e}-e,
        \qquad
        p^{-\Delta_{n,e}}\alpha_{n,e}\equiv(-1)^e\pmod p.
\]
\end{itemize}
\end{proposition}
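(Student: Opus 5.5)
We take $k=p^n$ in the recursion \eqref{eq:ABC-r} and sort the three terms.

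\emph{The $A$-term.} Since $A_{p^n}[x^{p^n}]$ vanishes unless $q\mid p^n$, it is absent for $1\le n<e$. For $n\ge e$ it equals
\[
\frac{(p^n)!}{q\,(p^{n-e})!}\,a_{p^{n-e}}(r)=\alpha_{n,e}\,a_{p^{n-e}}(r).
\]
The Legendre computation in Proposition~\ref{prop:A-zero}, with $m=p^{n-e}$, gives $\vp(\alpha_{n,e})=\mu_ep^{n-e}-e=\Delta_{n,e}$.

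For the unit part, write $(p^n)!=p^{\vp((p^n)!)}\Tp(p^n)$, and similarly for $(p^{n-e})!$. The powers of $p$ cancel down to $p^{\Delta_{n,e}}$. Then
\[
p^{-\Delta_{n,e}}\alpha_{n,e}=\frac{\Tp(p^n)}{\Tp(p^{n-e})}\equiv\frac{(-1)^n}{(-1)^{n-e}}=(-1)^e\pmod p
\]
by Lemma~\ref{lem:Tp-power-pe}.

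\emph{The $B$-term.} By Lemma~\ref{lem:B-integrality}, every scalar coefficient $\gamma_B(\eta)$ lies in $\Zp$. By Lemma~\ref{lem:pure-power-B-unit}, exactly one of them is a unit, namely the coefficient $\gamma_{n,e}$ of $a_0^{q-p}a_{p^{n-1}}^p$; here $a_0=1$. Every other monomial has $\gamma_B(\eta)\in p\Zp$, so $\gamma_B(\eta)/p\in\Zp$.

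The monomials of $B_{p^n}[x^{p^n}]$ satisfy $\sum_i i\eta_i=p^n$ with all indices $<p^n$, so each has weighted total index $p^n$. Dropping the factors $a_0=1$, each is a product of $a_1(r),\dots,a_{p^n-1}(r)$. We therefore set
\[
R_n:=-\sum_{\eta\ne\eta^*}\frac{\gamma_B(\eta)}{p}\,a^\eta ,
\]
where $\eta^*$ is the exponent vector of the unit monomial. Then $-B_{p^n}[x^{p^n}]=-\gamma_{n,e}a_{p^{n-1}}(r)^p+pR_n$, with $R_n\in\Zp[a_1(r),\dots,a_{p^n-1}(r)]$ as required.

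\emph{The $C$-term.} Since $p\mid p^n$, Proposition~\ref{prop:C-global}(i) says every scalar coefficient of $C_{p^n}[x^{p^n}]$ lies in $p\Zp$. Its monomials satisfy $\sum n\eta_n=p^n-1$. We set $S_n:=-p^{-1}C_{p^n}[x^{p^n}]$, so that $-p^rC_{p^n}[x^{p^n}]=p^{r+1}S_n$, with $S_n$ a $p$-integral polynomial in the lower coefficients of weighted total index $p^n-1$.

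\emph{Assembly.} Substituting the three pieces into \eqref{eq:ABC-r} gives \eqref{eq:pure-decomp-small} when $n<e$ and \eqref{eq:pure-decomp-large} when $n\ge e$.

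The only delicate point is the exponent bookkeeping in $\alpha_{n,e}$: Legendre gives $\vp((p^n)!)-\vp((p^{n-e})!)=(p^n-p^{n-e})/(p-1)=\mu_ep^{n-e}$, so subtracting $e$ for the factor $q$ leaves exactly $\Delta_{n,e}$, with the $\Tp$ ratio as the unit part. Everything else is a direct regrouping of results already proved, requiring no new estimates.
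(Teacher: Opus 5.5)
Your proposal is correct and follows essentially the same route as the paper. You treat the $A$-term via Legendre's formula and Lemma~\ref{lem:Tp-power-pe}, and you split off the unique unit $B$-monomial using Lemma~\ref{lem:pure-power-B-unit} with Lemma~\ref{lem:B-integrality}, absorbing the rest into $pR_n$. You absorb $p^rC_{p^n}[x^{p^n}]$ into $p^{r+1}S_n$ via Proposition~\ref{prop:C-global}(i); the signs and bookkeeping match.
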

\begin{proof}
The $A$-term contributes only when $n\ge e$, in which case
\[
        A_{p^n}[x^{p^n}]=\frac{(p^n)!}{p^e(p^{n-e})!}a_{p^{n-e}}(r)=\alpha_{n,e}a_{p^{n-e}}(r).
\]
Legendre's formula gives the stated valuation.  The unit congruence follows from Lemma~\ref{lem:Tp-power-pe}:
\[
        p^{-\vp(\alpha_{n,e})}\alpha_{n,e}
        \equiv \frac{\Tp(p^n)}{\Tp(p^{n-e})}
        \equiv \frac{(-1)^n}{(-1)^{n-e}}
        \equiv (-1)^e\pmod p.
\]

By Lemma~\ref{lem:pure-power-B-unit}, the unique $B$-monomial with unit scalar coefficient is $a_0(r)^{q-p}a_{p^{n-1}}(r)^p$, and its scalar coefficient is $\gamma_{n,e}$.  Every other $B$-monomial has scalar coefficient in $p\Zp$ by Lemma~\ref{lem:B-integrality}.  Hence the remaining $B$-contribution may be written as $pR_n$, where every monomial of $R_n$ has weighted total index $p^n$.

Likewise, since $p\mid p^n$, Proposition~\ref{prop:C-global} gives
\[
        C_{p^n}[x^{p^n}]\in p\Zp[a_1(r),\ldots,a_{p^n-1}(r)].
\]
Every monomial occurring there has weighted total index $p^n-1$, so after multiplying by the outer factor $p^r$ in \eqref{eq:ABC-r} we obtain the term $p^{r+1}S_n$.  Thus the stated support and integrality properties of $R_n$ and $S_n$ follow.
\end{proof}

\section{Recursive induction for the higher fibers}\label{sec:higher-induction}
We are ready to prove Theorem~\ref{thm:higher-structure}.  Throughout this section $r,e$ are fixed, and we write $v_i=v_i(r,e)$, $\Lambda=\Lambda_{r,e}$, and $m=m_{r,e}$ when no confusion can arise.  By the Section~\ref{sec:higher-recursion} formula \eqref{eq:a1-r}, one has $v_0=r$.

\medskip
\noindent\emph{Outline of the higher-fiber argument.}
We treat the divisible non-pure case in four steps.  First, global $B$-coefficient integrality gives $p$-integral scalar coefficients.  Second, the weight-excess identity controls the filtered degree under the pure-power inequalities already known.  Third, every unit scalar term in the divisible non-pure sector is carry-free.  Finally, the carry-free cumulant coefficient gives the degree-$\Lambda$ initial unit sector.  We use this scheme repeatedly below.

We refer to the four assertions of Theorem~\ref{thm:higher-structure} as (L), (D), (P), and (S).  For $N\ge1$, let $\mathcal I(N)$ be the package consisting of
\begin{enumerate}[label=\textup{(\arabic*)},leftmargin=2.2em]
\item (L) for every $k\le N$;
\item (D) for every divisible non-pure $k\le N$;
\item (P) for every pure power $p^n\le N$ with $n\ge1$;
\item (S) for every finite sum $n_1+\cdots+n_t=M$ with $M\le N$.
\end{enumerate}
We prove $\mathcal I(N)$ for all $N$ by strong induction.  In the step from $\mathcal I(N-1)$ to $\mathcal I(N)$, we first handle the pure-power case $N=p^n$, then the divisible non-pure case, and finally the global lower bound at the general index $N$.  The subadditivity statement is recovered each time from the inequalities $v_{j+1}\le p v_j$.  In this way the dependence between (L), (D), (P), and (S) is explicit, and there is no circularity.

Two auxiliary lemmas will be used repeatedly.

\begin{lemma}[Top-digit lower bound from lower levels]\label{lem:top-digit-lower-pe}
Assume that
\[
        v_{i+1}\le p v_i\qquad (0\le i\le n-2).
\]
Let $e_j$ be nonnegative integers with $0\le j<p^n$ and
\[
        \sum_{j=0}^{p^n-1}e_jj=p^n.
\]
Then
\[
        \sum_{j=0}^{p^n-1}e_j\Lambda(j)\ge p\,v_{n-1}.
\]
\end{lemma}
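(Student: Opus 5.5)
The plan is to compare $\Lambda$ with a linear function of the index, namely $j\mapsto j\,v_{n-1}/p^{n-1}$. The hypothesis $v_{i+1}\le p v_i$ says that the normalized weights $v_i/p^i$ are non-increasing in $i$ for $0\le i\le n-1$. Since every index $j<p^n$ has base-$p$ digits only in positions $0,\dots,n-1$, the smallest normalized weight available is $v_{n-1}/p^{n-1}$. The lemma should then follow by linearity from the constraint $\sum_j e_j j=p^n$.

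\emph{Step 1: normalized weights.} Iterating the hypothesis along $i,i+1,\dots,n-2$ gives
\[
v_{n-1}\le p\,v_{n-2}\le p^2 v_{n-3}\le\cdots\le p^{\,n-1-i}v_i\qquad(0\le i\le n-1),
\]
and therefore $v_i\ge p^{\,i-(n-1)}v_{n-1}$. For $i=n-1$ this is an equality, and for $n=1$ there is nothing to iterate.

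\emph{Step 2: a linear lower bound for $\Lambda$ below $p^n$.} Write $j=\sum_{i=0}^{n-1}j_ip^i$ with $0\le j<p^n$. Using $j_i\ge0$ and Step~1,
\[
\Lambda(j)=\sum_{i=0}^{n-1}j_iv_i\ \ge\ \frac{v_{n-1}}{p^{n-1}}\sum_{i=0}^{n-1}j_ip^i=\frac{j\,v_{n-1}}{p^{n-1}}.
\]
This bound holds for every $j<p^n$, including $j=0$, where both sides vanish.

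\emph{Step 3: summation.} Multiply by $e_j\ge0$ and sum over $0\le j<p^n$. The constraint $\sum_j e_j j=p^n$ then gives
\[
\sum_{j}e_j\Lambda(j)\ \ge\ \frac{v_{n-1}}{p^{n-1}}\sum_j e_j j=\frac{v_{n-1}}{p^{n-1}}\,p^n=p\,v_{n-1}.
\]

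There is no genuine obstacle in this argument. The one point to watch is that Step~2 uses only digits in positions $\le n-1$. This is exactly why the hypothesis $j<p^n$ matters: a digit at position $n$ would bring in $v_n$, which is not yet controlled at this stage of the induction. The argument needs no sign assumption on the $v_i$ beyond the stated inequalities, and in the application the $v_i$ are nonnegative valuations anyway.
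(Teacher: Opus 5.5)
Your proof is correct, but it takes a different route from the paper's.

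The paper uses a carry process. It reads $\sum_j e_j\Lambda(j)$ as the total weight of a multiset of digit units $p^i$. It then performs every carry at levels $0,\dots,n-2$; each replacement of $p$ copies of $p^i$ by one copy of $p^{i+1}$ weakly lowers the weight because $v_{i+1}\le pv_i$. It stops before carrying the resulting $p$ copies of $p^{n-1}$ to level $n$, so the final weight is $p\,v_{n-1}$.

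You instead note that the hypothesis makes $v_i/p^i$ non-increasing for $0\le i\le n-1$. This gives the pointwise linear minorant $\Lambda(j)\ge j\,v_{n-1}/p^{n-1}$ for all $j<p^n$, and you conclude by summing against the constraint.

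Your version has two advantages. It is shorter and needs no bookkeeping of the carry process; for example, you never have to check that the partially carried configuration is exactly $p$ copies of $p^{n-1}$ and nothing below. It also visibly works for an arbitrary total $T=\sum_j e_j j$, giving $\sum_j e_j\Lambda(j)\ge T\,v_{n-1}/p^{n-1}$.

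The paper's carry argument has its own advantages. It is the same mechanism the paper uses for the subadditivity clause (S) and in Lemma~\ref{lem:weight-excess-pe}, so it fits the surrounding induction. For a general total it also gives the sharper bound $\sum_{i\le n-2}T_iv_i+\lfloor T/p^{n-1}\rfloor v_{n-1}$, the weight of the partially carried total. That bound agrees with yours whenever $p^{n-1}\mid T$, in particular for $T=p^n$, which is the only case the lemma needs.
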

\begin{proof}
Write every $j$ in base $p$ and regard the left side as the total digit weight of the multiset consisting of $e_j$ copies of $j$.  Passing from this multiset to the base-$p$ representation of the total $p^n$ amounts to repeatedly replacing $p$ copies of $p^i$ by one copy of $p^{i+1}$.  We perform all carries below level $n-1$ but do not carry the resulting $p$ copies of $p^{n-1}$ to level $n$.  Each carry weakly decreases the total weight because $v_{i+1}\le pv_i$.  Therefore the initial weight is at least the final weight $p v_{n-1}$.
\end{proof}

\begin{lemma}[Weight excess from carries in degree $p^e$]\label{lem:weight-excess-pe}
Fix $M\ge1$ and write
\[
        M=M_0+M_1p+\cdots+M_sp^s,\qquad 0\le M_i\le p-1.
\]
Let $(e_i)_{0\le i<M}$ satisfy
\[
        \sum_{i=0}^{M-1}e_i=p^e,
        \qquad
        \sum_{i=0}^{M-1}ie_i=M.
\]
Write $i=\sum_{j\ge0}i_jp^j$ and set
\[
        u_j:=\sum_{i=0}^{M-1}e_i i_j.
\]
Let $c_j\ge0$ be the carry numbers determined by
\[
        u_j=M_j+p c_j-c_{j-1}\qquad (j\ge0),\qquad c_{-1}:=0.
\]
Then
\begin{equation}\label{eq:weight-excess-pe}
        \sum_{j\ge0}u_jv_{j+1}-\Lambda(pM)
        =\sum_{j\ge0}c_j\bigl(pv_{j+1}-v_{j+2}\bigr).
\end{equation}
If, in addition,
\[
        v_{j+2}\le pv_{j+1}
        \qquad\text{for every }j\text{ with }c_j>0,
\]
then
\[
        \sum_{j\ge0}u_jv_{j+1}\ge \Lambda(pM),
\]
and equality holds if and only if every nonzero carry $c_j$ occurs at a level with $v_{j+2}=pv_{j+1}$.
\end{lemma}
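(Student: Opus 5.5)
This identity is pure bookkeeping: a telescoping computation in the carry numbers. The plan is to substitute the carry relation into the left side and then reorganize the sum by levels.

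First, I express $\Lambda(pM)$. Since $pM=\sum_j M_jp^{j+1}$ is the base-$p$ expansion of $pM$, the definition of $\Lambda$ gives
\[
        \Lambda(pM)=\sum_{j\ge0}M_jv_{j+1}.
\]
Next, I use $u_j=M_j+pc_j-c_{j-1}$ with $c_{-1}=0$ to write
\[
        \sum_{j\ge0}u_jv_{j+1}-\Lambda(pM)
        =\sum_{j\ge0}\bigl(pc_j-c_{j-1}\bigr)v_{j+1}
        =\sum_{j\ge0}pc_jv_{j+1}-\sum_{j\ge0}c_jv_{j+2}.
\]
The second equality reindexes $j-1\mapsto j$ and drops the term $c_{-1}=0$. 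This is exactly \eqref{eq:weight-excess-pe}.

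To justify the manipulations, I note that all sums are finite. Each $i<M$ has finitely many digits, and the $e_i$ vanish beyond $M-1$, so $u_j=0$ for large $j$. The carries then vanish for large $j$ as well: the $c_j$ are the usual carries in the column addition of the digits of the summands (with multiplicity $e_i$), whose total is $M$. Their nonnegativity and uniqueness follow from the standard recursion $c_j=(u_j+c_{j-1}-M_j)/p$. Integrality holds because the column sums reduce to the digits of $M$. Eventual vanishing holds since $\sum_j u_jp^j=M=\sum_jM_jp^j$.

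For the inequality, every summand $c_j(pv_{j+1}-v_{j+2})$ is nonnegative: either $c_j=0$, or $c_j>0$, in which case the hypothesis gives $v_{j+2}\le pv_{j+1}$. Hence the right side is $\ge0$. The sum vanishes if and only if each term vanishes. That happens exactly when, for every $j$ with $c_j>0$, we have $pv_{j+1}=v_{j+2}$, which gives the equality characterization.

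The only point needing care, which I expect to be the mild obstacle, is confirming that the $c_j$ in the statement coincide with these column carries and are nonnegative and eventually zero. Otherwise the reindexing would be unjustified. I would settle this by the explicit recursion above. The hypothesis $\sum e_i=p^e$ plays no role in the identity itself.
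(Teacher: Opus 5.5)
Your proof is correct and follows the paper's argument essentially verbatim. You read off $\Lambda(pM)=\sum_j M_jv_{j+1}$ from the shifted digits, substitute $u_j=M_j+pc_j-c_{j-1}$, reindex to obtain the identity, and conclude by termwise nonnegativity. Your extra check that the $c_j$ are the usual column carries and vanish for large $j$, which is what justifies the reindexing, is a detail the paper leaves implicit.
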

\begin{proof}
Since the base-$p$ digits of $pM$ are $0,M_0,M_1,\ldots,M_s$, one has
\[
        \Lambda(pM)=\sum_{j\ge0}M_jv_{j+1}.
\]
Subtracting this from $\sum_j u_jv_{j+1}$ and using the defining relation for the carries gives
\[
\begin{aligned}
        \sum_{j\ge0}u_jv_{j+1}-\Lambda(pM)
        &=\sum_{j\ge0}(pc_j-c_{j-1})v_{j+1}\\
        &=\sum_{j\ge0}c_j\bigl(pv_{j+1}-v_{j+2}\bigr),
\end{aligned}
\]
which is \eqref{eq:weight-excess-pe}.  Under the displayed hypothesis every summand on the right is nonnegative, and the final assertion is immediate.
\end{proof}

\begin{lemma}[Carry-free coefficient in degree $p^e$]\label{lem:carry-free-coeff-pe}
Let $M\ge1$ be not a power of $p$, and write $M=M_0+M_1p+\cdots+M_sp^s$.  Introduce variables $Y_0,\ldots,Y_s$ and put
\[
        Y^{[i]}:=\prod_jY_j^{i_j}\quad\text{if }i=\sum_ji_jp^j,
        \qquad Y^{[0]}:=1.
\]
Then the coefficient of $Y^{[M]}$ in
\[
        \frac{(pM)!}{p^e}[y^M]
        \left(\sum_{i=0}^{M-1}\frac{Y^{[i]}}{(pi)!}y^i\right)^{p^e}
\]
is congruent to $-1$ modulo $p$.
\end{lemma}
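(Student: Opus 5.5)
The plan is to expand the $p^e$-th power by the multinomial theorem, read off exactly which multiplicity vectors produce the monomial $Y^{[M]}$, and then recognize the resulting sum modulo $p$ as a multivariate cumulant coefficient, as in Propositions~\ref{prop:vector-B} and~\ref{prop:cumulant-collapse}.

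\emph{Step 1: the contributing terms.} A term of $[y^M](\cdots)^{p^e}$ is indexed by multiplicities $(e_i)_{0\le i<M}$ with $\sum_i e_i=p^e$ and $\sum_i ie_i=M$. It produces $\prod_i (Y^{[i]})^{e_i}=\prod_j Y_j^{u_j}$, with $u_j=\sum_i e_i i_j$ as in Lemma~\ref{lem:weight-excess-pe}. This equals $Y^{[M]}$ exactly when $u_j=M_j$ for all $j$, that is, when all carries $c_j$ vanish. So only carry-free decompositions of $M$ contribute. Let $d=(M_0,\dots,M_s)$ be the digit vector of $M$, now including the units digit. Each positive index $i$ with $e_i>0$ then has a digit vector $\beta\ne0$, and these vectors form a vector partition $\Pi=\prod\beta^{r_\beta}$ of $d$, with $r_\beta=e_i$. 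The one-block partition $\Pi=(d)$ is excluded, because it would need $i=M>M-1$. Put $R=R(\Pi)=\sum_{i>0}e_i\le \Sp(M)$. Carry-freeness gives $r_\beta\le M_j\le p-1$ for every $\beta$, so every $r_\beta!$ and $\beta!$ is a $p$-adic unit.

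\emph{Step 2: the scalar modulo $p$.} The scalar attached to $(e_i)$ is
\[
        \frac{(pM)!}{\prod_{i>0}((pi)!)^{e_i}}\cdot\frac{(p^e)!}{p^e\,e_0!\prod_{i>0}e_i!}
        =\frac{(pM)!}{\prod_{i>0}((pi)!)^{e_i}}\cdot\frac{\Xi_q(R)}{\prod_\beta r_\beta!},
\]
using $(q)!/(q-R)!=q\,\Xi_q(R)$. The addition $\sum e_i\cdot pi=pM$ is carry-free, so Lucas' theorem in multinomial form (as in the proof of Proposition~\ref{prop:vector-B}) gives
\[
        \frac{(pM)!}{\prod((pi)!)^{e_i}}\equiv \frac{d!}{\prod_\beta(\beta!)^{r_\beta}}\pmod p.
\]
By \eqref{eq:Xi-q-modp}, $\Xi_q(R)\equiv(-1)^{R-1}(R-1)!$. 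Since $R\le \Sp(M)$ is small relative to $q$, no formal continuation issue arises; in any case the same remark as in Section~\ref{sec:vector-partitions} applies. The coefficient of $Y^{[M]}$ is therefore congruent modulo $p$ to
\[
        \sum_{\Pi\vdash d,\ \Pi\ne(d)}(-1)^{R(\Pi)-1}\frac{(R(\Pi)-1)!\,d!}{\prod_\beta r_\beta!(\beta!)^{r_\beta}}.
\]

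\emph{Step 3: cumulant evaluation.} Including $\Pi=(d)$ adds exactly $1$, and the full sum is the coefficient of $t^d/d!$ in $\log\bigl(1+\sum_{\beta\ne0}t^\beta/\beta!\bigr)$. As in Proposition~\ref{prop:cumulant-collapse}, we may enlarge the block set to all nonzero $\beta\in\mathbb Z_{\ge0}^{s+1}$ without changing the $t^d$ coefficient, since $d_j\le p-1$ and any block with $\beta_j>d_j$ cannot contribute. After the enlargement the series is $\log e^{t_0+\cdots+t_s}=t_0+\cdots+t_s$. Its $t^d$ coefficient vanishes unless $|d|=1$, that is, unless $M$ is a power of $p$, which is excluded. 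Hence the full sum is $0$, and the restricted sum is $0-1=-1$.

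The main obstacle I expect is Step 2: stating the multinomial Lucas congruence cleanly for the shifted indices $pi$, and checking that no hidden $p$ in the denominators cancels the $p^e$ factor. The divisions $\prod e_i!$ and $(\beta!)^{r_\beta}$ must all be confirmed to be units, which carry-freeness guarantees. The other point to check carefully is that the exclusion of $i=M$ matches exactly the removal of the one-block partition.
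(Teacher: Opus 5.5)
Your proposal is correct and follows the paper's proof essentially step for step: carry-free identification of the contributing terms, the falling product $\Xi_q(R)\equiv(-1)^{R-1}(R-1)!$, and the cumulant of $e^{t_0+\cdots+t_s}$ with the one-block partition removed. The only difference is cosmetic: you apply multinomial Lucas directly to the shifted carry-free addition $\sum_i e_i\cdot pi=pM$, whereas the paper first strips off the units $\nu_i=(pi)!/(p^ii!)$ and then applies Lucas to $\sum_i i\eta_i=M$.

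One small correction. The bound $R\le\Sp(M)$ need not be small relative to $q$, since the digit sum of $M$ grows with its number of digits. What actually justifies summing over all proper vector partitions of $d$ is your fallback: partitions with $R>q$ have $R-1\ge p$, hence $(R-1)!\equiv0\pmod p$, and their denominators are units.
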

\begin{proof}
A contributing multi-index has multiplicities $\eta_i$ satisfying $\sum_i\eta_i=p^e$, $\sum_i i\eta_i=M$, and the condition on the $Y$-exponent says that this addition is digitwise carry-free and has digit vector $(M_0,\ldots,M_s)$.

Set $\nu_i=(pi)!/(p^ii!)$.  As in \eqref{eq:nu-i}, $\nu_i\equiv (-1)^i\pmod p$.  Since $\sum_i i\eta_i=M$,
\[
        \frac{\nu_M}{\prod_i\nu_i^{\eta_i}}
        \equiv (-1)^{M-\sum_i i\eta_i}
        \equiv 1\pmod p,
\]
so the signs from $\nu_M$ and the $\nu_i$ cancel.  For fixed nonzero block multiplicities, the remaining scalar is
\[
        (-1)^{N-1}\frac{(N-1)!\,d!}{\prod_{\beta\ne0}r_\beta!(\beta!)^{r_\beta}},
\]
where $d=(M_0,\ldots,M_s)$ and $N=\sum_{\beta\ne0}r_\beta$.  Indeed, the only place where $e$ enters is the falling product
\[
        \Xi_{p^e}(N)=\prod_{u=1}^{N-1}(p^e-u)\equiv(-1)^{N-1}(N-1)!\pmod p.
\]
By \eqref{eq:Xi-q-modp}, the carry-free vector-partition sum may be taken uniformly without imposing $N\le q$.
Thus the sum over all proper vector partitions of $d$ is the cumulant coefficient in
\[
        \log\left(1+\sum_{\beta\ne0}\frac{t^\beta}{\beta!}\right).
\]
To compute the coefficient of $t^d$, we may temporarily enlarge the block set to all nonzero $\beta\in\mathbb Z_{\ge0}^{s+1}$, because any coordinate $\beta_j>d_j$ cannot contribute to $t^d$.  Then
\[
        1+\sum_{\beta\ne0}\frac{t^\beta}{\beta!}
        =\prod_{j=0}^s\left(\sum_{n\ge0}\frac{t_j^n}{n!}\right)
        =e^{t_0+\cdots+t_s}.
\]
Hence the logarithm is $t_0+\cdots+t_s$.  Because $M$ is not a power of $p$, the digit vector $d$ is not a standard basis vector, so the full cumulant coefficient is $0$.  The one-block partition of $d$ is excluded by the range $0\le i<M$.  Therefore the proper-partition sum is $-1\pmod p$.
\end{proof}

\begin{lemma}[Unit terms are carry-free in the divisible non-pure sector]\label{lem:unit-carryfree-pe}
Let $M\ge1$ be not a power of $p$, and let $\eta=(\eta_i)_{0\le i<M}$ satisfy
\[
        \sum_{i=0}^{M-1}\eta_i=p^e,
        \qquad
        \sum_{i=0}^{M-1}i\eta_i=M.
\]
Consider the corresponding term in
\[
        \frac{(pM)!}{p^e}[y^M]
        \left(\sum_{i=0}^{M-1}\frac{Y^{[i]}}{(pi)!}y^i\right)^{p^e}.
\]
If its scalar coefficient is a $p$-adic unit, then the addition $\sum_i i\eta_i=M$ is carry-free.  Equivalently, the associated monomial in the variables $Y_j$ is already $Y^{[M]}$.
\end{lemma}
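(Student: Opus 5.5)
The plan is to identify the scalar coefficient of the given term with a $B$-coefficient $\gamma_B$ from Section~\ref{sec:recursion}, and then rerun the carry-defect argument of Proposition~\ref{prop:B-unit-divisible}, using that $M$ is not a power of $p$ to rule out the one non-carry-free unit term.

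\emph{Identification.} Given $\eta=(\eta_i)_{0\le i<M}$, define a multiplicity vector $\eta'$ on indices $n<pM$ by $\eta'_{pi}:=\eta_i$, with all other entries zero. Then $\sum_n\eta'_n=p^e$ and $\sum_n n\eta'_n=pM$. The scalar coefficient of the term in question is
\[
        \frac{(pM)!}{p^e}\binom{p^e}{\eta_0,\eta_1,\ldots}\prod_i\frac1{((pi)!)^{\eta_i}},
\]
which is exactly $\gamma_B(\eta')$ with $k=pM$. By \eqref{eq:B-cd} its valuation is $c(\eta')+d(\eta')-e$. Since $\Sp(pi)=\Sp(i)$ and $\Sp(pM)=\Sp(M)$, we have
\[
        c(\eta')=\frac{\sum_i\eta_i\Sp(i)-\Sp(M)}{p-1},
\]
which is the carry count of the addition $\sum_i i\eta_i=M$. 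Also $d(\eta')=d(\eta)$. By Lemma~\ref{lem:B-integrality}, the unit hypothesis is therefore equivalent to $c+d=e$.

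\emph{Case analysis.} If $d=e$, then $c=0$, so the addition $\sum_i i\eta_i=M$ is carry-free, which is the claim. Otherwise $d<e$. Put $s:=e-d\ge1$; Lemma~\ref{lem:carry-depth} then gives $\eta_i=p^su_i$ for all $i$. Set $M':=\sum_i iu_i$, so $M=p^sM'$ and $\Sp(M)=\Sp(M')$, and set $A:=\sum_iu_i\Sp(i)$. The condition $c=s$ reads
\[
        p^sA-\Sp(M')=s(p-1).
\]
Digit-sum subadditivity gives $\Sp(M')\le A$, hence $(p^s-1)A\le s(p-1)$. Since $A\ge1$ and $p$ is odd, this forces $s=1$ and $A=1$, and then $\Sp(M')=1$. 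Thus exactly one positive index $i$ occurs, with $u_i=1$ and $\Sp(i)=1$, so $i=p^j$ for some $j$. Consequently $M=p\cdot p^j$ is a power of $p$, contradicting the hypothesis. (This also shows the lone occurring index would be $M/p$.) So only the case $d=e$, $c=0$ survives.

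\emph{Equivalence with the $Y$-monomial.} The monomial attached to the term is $\prod_i (Y^{[i]})^{\eta_i}=\prod_jY_j^{u_j}$, where $u_j$ is the total $j$-th digit as in Lemma~\ref{lem:weight-excess-pe}. This equals $Y^{[M]}$ exactly when $u_j=M_j$ for every $j$, i.e. exactly when every carry $c_j$ vanishes, i.e. when the addition is carry-free.

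The only delicate point is the translation of valuations between the $pi$-indexed and $i$-indexed additions, and this is immediate from $\Sp(pn)=\Sp(n)$. Everything else repeats the argument of Proposition~\ref{prop:B-unit-divisible}, so I do not expect a real obstacle.
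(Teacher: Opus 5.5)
Your proposal is correct and follows essentially the paper's argument: the unit condition $c+d=e$ from \eqref{eq:B-cd}, Lemma~\ref{lem:carry-depth} forcing $\eta_i=p^su_i$ when $d<e$, and the inequality $(p^s-1)A\le s(p-1)$ forcing $s=1$, $A=1$, so that $M$ would be a power of $p$. Your explicit identification of the scalar with $\gamma_B(\eta')$ at $k=pM$ (via $\Sp(pn)=\Sp(n)$) only spells out what the paper uses implicitly when it invokes \eqref{eq:B-cd}.
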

\begin{proof}
By \eqref{eq:B-cd}, the unit condition is $c(\eta)+d(\eta)=e$.  Suppose first that $c(\eta)>0$.  Then $d(\eta)<e$.  Put $s=e-d(\eta)\ge1$.  By Lemma~\ref{lem:carry-depth}, every $\eta_i$ is divisible by $p^s$; write $\eta_i=p^su_i$.  Let
\[
        M':=\sum_i iu_i,
        \qquad
        A:=\sum_i u_i\Sp(i).
\]
Then $M=p^sM'$ and the equality $c(\eta)=s$ becomes
\[
        p^sA-\Sp(M')=s(p-1).
\]
Since $\Sp(M')\le A$, we get
\[
        (p^s-1)A\le s(p-1).
\]
For $s\ge2$ this is impossible.  Therefore $s=1$, and the same inequality forces $A=1$ and $\Sp(M')=1$.  Hence $M'$ is a power of $p$, and the unique positive index after division by $p$ is a power of $p$.  Therefore $M=pM'$ is itself a power of $p$, contradicting the hypothesis.  This contradiction shows that $c(\eta)=0$.

Thus the addition $\sum_i i\eta_i=M$ is carry-free.  The exponent vector in the variables $Y_j$ is therefore exactly the base-$p$ digit vector of $M$, so the monomial is $Y^{[M]}$.
\end{proof}

In particular, every unit scalar term in the divisible non-pure sector is carry-free; consequently, the degree-$\Lambda$ initial unit sector is computed by the carry-free cumulant coefficient.

For the associated-graded formalism, let
\[
        \mathcal R_{r,e}:=\Zp[Y_0,Y_1,\ldots],
        \qquad
        \wt(Y_j):=v_{j+1}(r,e).
\]
For an integer $\lambda\in\mathbb Z_{\ge0}$, define
\[
        F^\lambda\mathcal R_{r,e}:=
        \left\{\sum_{\nu} c_\nu Y^\nu:
        \vp(c_\nu)+\sum_{j\ge0}\nu_jv_{j+1}(r,e)\ge \lambda
        \text{ for every }\nu\right\}.
\]
Equivalently, $F^\lambda\mathcal R_{r,e}$ is the $\Zp$-span of the monomials $p^aY^\nu$ with
\[
        a+\sum_{j\ge0}\nu_jv_{j+1}(r,e)\ge \lambda.
\]
We write
\[
        \gr_\Lambda\mathcal R_{r,e}:=\bigoplus_{\lambda\in\mathbb Z_{\ge0}}F^\lambda\mathcal R_{r,e}/F^{\lambda+1}\mathcal R_{r,e},
\]
and denote by
\[
        \pi:=\operatorname{in}(p)\in \gr_\Lambda^1\mathcal R_{r,e}
\]
the initial form of $p$.  Thus $(\gr_\Lambda\mathcal R_{r,e})/(\pi)$ is the $\Fp$-polynomial ring on the variables $Y_j$, and throughout the next proposition we record only the $\pi$-free unit-coefficient part of the initial form.

\begin{proposition}[$\pi$-free initial form of the divisible $B$-term]\label{prop:initial-form-B-pe}
Fix $r\ge1$ and $e\ge2$.  Let $M\ge1$ be not a power of $p$, write
\[
        M=M_0+M_1p+\cdots+M_sp^s,
        \qquad
        0\le M_i\le p-1,
\]
and set $k=pM$.  Assume that
\[
        v_{j+2}(r,e)\le pv_{j+1}(r,e)
        \qquad (0\le j\le s-1).
\]
Define
\[
        Y^{[i]}:=\prod_{j\ge0}Y_j^{i_j}
        \qquad\text{for }i=\sum_{j\ge0} i_jp^j,
\]
so in particular $Y^{[0]}=1$, and set
\[
        \mathcal B_{M,r,e}(Y):=\frac{(pM)!}{p^e}[y^M]
        \left(\sum_{i=0}^{M-1}\frac{Y^{[i]}}{(pi)!}y^i\right)^{p^e}.
\]
Then the degree-$\Lambda(k)$ class of $\mathcal B_{M,r,e}(Y)$ in
$(\gr_\Lambda\mathcal R_{r,e})/(\pi)$ is
\[
        -Y^{[M]}.
\]
\end{proposition}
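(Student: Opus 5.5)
Expand $\mathcal B_{M,r,e}(Y)$ monomial by monomial and show three things. First, every term lies in $F^{\Lambda(k)}$. Second, the only contribution to the degree-$\Lambda(k)$ class that survives modulo $\pi$ is the carry-free one. Third, that carry-free coefficient is $-1$ modulo $p$. Concretely, a term is indexed by multiplicities $\eta=(\eta_i)_{0\le i<M}$ with $\sum_i\eta_i=p^e$ and $\sum_i i\eta_i=M$. Its scalar is the $B$-coefficient $\gamma_B$ attached to the index multiset $\{pi\}$ of total $pM$. This scalar is $p$-integral with valuation $c(\eta)+d(\eta)-e$ by \eqref{eq:B-cd} and Lemma~\ref{lem:B-integrality}. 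Note that $c(\eta)$ computed for the indices $pi$ with total $pM$ agrees with that for the indices $i$ with total $M$, since multiplying by $p$ preserves digit sums. The $Y$-monomial attached to the term is $\prod_iY^{[i]\eta_i}$, whose $Y_j$-exponent is $u_j=\sum_i\eta_i i_j$. Its weight is therefore $\sum_j u_jv_{j+1}$.

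\emph{Step 1: filtration bound.} Apply Lemma~\ref{lem:weight-excess-pe} with $e_i=\eta_i$. The carries $c_j$ in the addition $\sum i\eta_i=M$ occur only at levels $j\le s-1$, because the total $M$ has top digit at level $s$ and the carry out of level $s$ must vanish. At those levels the hypothesis $v_{j+2}\le pv_{j+1}$ holds, so $\sum_ju_jv_{j+1}\ge\Lambda(pM)=\Lambda(k)$. Together with $\vp(\gamma_B)\ge0$, this places every term in $F^{\Lambda(k)}\mathcal R_{r,e}$.

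\emph{Step 2: only unit carry-free terms survive.} A term contributes to the class in $F^{\Lambda(k)}/F^{\Lambda(k)+1}$ modulo $\pi$ only if its scalar is a $p$-adic unit. A non-unit scalar either raises the filtration degree or is a multiple of $\pi$ in the graded ring; either way it dies in $(\gr_\Lambda\mathcal R_{r,e})/(\pi)$. By Lemma~\ref{lem:unit-carryfree-pe}, since $M$ is not a power of $p$, every unit-scalar term is carry-free, and its monomial is exactly $Y^{[M]}$, of weight exactly $\Lambda(k)$. So the degree-$\Lambda(k)$ class modulo $\pi$ equals $\overline{c}\,Y^{[M]}$, where $c$ is the total coefficient of $Y^{[M]}$ in $\mathcal B_{M,r,e}$ reduced mod $p$. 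The non-unit carry-free terms contribute zero mod $p$ anyway.

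\emph{Step 3: the coefficient.} Lemma~\ref{lem:carry-free-coeff-pe} computes $c\equiv-1\pmod p$, which yields the class $-Y^{[M]}$. Some care is needed with non-carry-free terms whose monomial happens to equal $Y^{[M]}$. This cannot happen: a carry changes the exponent vector $(u_j)$ away from the digit vector of $M$, because $u_j=M_j+pc_j-c_{j-1}$ and the lowest nonzero $c_{j}$ forces $u_j\neq M_j$. So the $Y^{[M]}$-coefficient in Lemma~\ref{lem:carry-free-coeff-pe} is exactly the carry-free sum.

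The main obstacle is Step 2's bookkeeping. A term with a non-unit scalar but strictly larger weight excess must land in $F^{\Lambda(k)+1}$ or in $\pi\cdot\gr$. We also have to check that unit terms never sit in degree above $\Lambda(k)$ in a way that matters. The latter is harmless, since we only read the degree-$\Lambda(k)$ component. I would handle this by the simple observation that the total filtration degree of each term is $\vp(\gamma_B)+\sum_ju_jv_{j+1}\ge\Lambda(k)$. Equality then requires both $\vp(\gamma_B)=0$ and zero weight excess, and all other terms vanish in $\gr^{\Lambda(k)}$ modulo $\pi$.
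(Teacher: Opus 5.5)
Your proposal is correct and follows the paper's proof essentially step for step. The three steps are the same: the filtration bound via Lemma~\ref{lem:weight-excess-pe} with carries confined to levels $\le s-1$; the observation that only unit-scalar terms survive in degree $\Lambda(k)$ modulo $\pi$, and these are carry-free by Lemma~\ref{lem:unit-carryfree-pe}; and the coefficient $-1$ from Lemma~\ref{lem:carry-free-coeff-pe}. Your extra check, that a term with a nonzero carry cannot produce $Y^{[M]}$ because the lowest nonzero $c_j$ forces $u_j\ne M_j$, makes explicit a point the paper uses implicitly when it identifies the $Y^{[M]}$-coefficient with the carry-free sum.
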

\begin{proof}
Expand $\mathcal B_{M,r,e}(Y)$ as a sum over multi-indices $\eta=(\eta_i)_{0\le i<M}$ with $\sum_i\eta_i=p^e$ and $\sum_i i\eta_i=M$.  By Lemma~\ref{lem:B-integrality}, every scalar coefficient is $p$-integral.  If the associated exponent vector in the variables $Y_j$ is $u=(u_0,u_1,\ldots)$, then the total sum being $M$ implies that there is no carry out of the $p^s$-place, so only carry levels $0,\ldots,s-1$ can occur.  Lemma~\ref{lem:weight-excess-pe} therefore gives
\[
        \sum_{j\ge0}u_jv_{j+1}(r,e)\ge \Lambda(k),
\]
under the displayed hypothesis on the pure-power inequalities.  Hence every term has filtered weight at least $\Lambda(k)$.

Passing to $(\gr_\Lambda\mathcal R_{r,e})/(\pi)$ kills every term whose scalar coefficient is divisible by $p$.  If a term survives in degree $\Lambda(k)$, then its scalar coefficient is a $p$-adic unit, so Lemma~\ref{lem:unit-carryfree-pe} shows that the underlying addition $\sum_i i\eta_i=M$ is carry-free.  Thus its monomial is already $Y^{[M]}$.

The sum of the scalar coefficients of all such carry-free terms is the coefficient computed in Lemma~\ref{lem:carry-free-coeff-pe}, namely $-1$ modulo $p$.  Therefore the degree-$\Lambda(k)$ class of $\mathcal B_{M,r,e}(Y)$ in $(\gr_\Lambda\mathcal R_{r,e})/(\pi)$ is $-Y^{[M]}$.
\end{proof}

\begin{remark}[The stable-layer quotient]\label{rem:stable-quotient-pe}
If $v_{j+2}(r,e)=pv_{j+1}(r,e)$, then $Y_j^p$ and $Y_{j+1}$ have the same filtered degree, so one may pass further to the quotient with relations $Y_j^p=Y_{j+1}$.  By Proposition~\ref{prop:initial-form-B-pe}, however, the surviving unit part in degree $\Lambda(k)$ is already carry-free.  Thus this extra quotient is not needed in the scalar computation.
\end{remark}

\begin{lemma}[Abstract pure-power branch pattern and no tie]
\label{lem:abstract-branch-pe}
Fix integers $r\ge1$ and $e\ge2$.  Let $(w_n)_{n\ge0}$ be defined by
\[
        w_0=r,
        \qquad
        w_n=pw_{n-1}\quad(1\le n<e),
\]
and, for $n\ge e$,
\[
        w_n=\min\{\Delta_{n,e}+w_{n-e},\,p w_{n-1}\}.
\]
Set
\[
        s:=\left\lceil\frac r e\right\rceil.
\]
Then the branch word for this abstract recursion is
\[
        (B^{e-1}A)^sB^\infty.
\]
Equivalently, the $A$-branch occurs exactly at $n=e,2e,\ldots,se$, every other level is $B$-dominated, and for every $n\ge e$ one has
\[
        \Delta_{n,e}+w_{n-e}\ne p w_{n-1}.
\]
Moreover
\[
        w_{je+t}=p^t\left(r+\frac{p^{j e}-1}{p-1}-e\,j\right)
        \qquad(0\le j\le s,\ 0\le t\le e-1),
\]
and $w_n=p^{n-e s}w_{e s}$ for all $n\ge e s$.
\end{lemma}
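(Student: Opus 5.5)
The plan is a direct induction on blocks of $e$ consecutive levels, using the closed form
\[
        W_j:=r+\frac{p^{je}-1}{p-1}-e\,j\qquad(0\le j\le s)
\]
as the candidate for $w_{je}$. At each level $n\ge e$ we compute the difference between the two competitors $A_n:=\Delta_{n,e}+w_{n-e}$ and $B_n:=p\,w_{n-1}$ explicitly. Three facts about $\mu_e$ get used throughout. First, $\mu_e>e$ for odd $p$ and $e\ge2$. Second, $\mu_e<p^e-1$. Third, the identity $(p^e-1)\frac{p^{je}-1}{p-1}=\mu_e(p^{je}-1)$ holds.

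\emph{Step 1: levels $n=(j+1)e$ with $0\le j\le s-1$, and the boundary level $n=(s+1)e$.} Assume inductively that $w_{je+t}=p^tW_j$ for $0\le t\le e-1$. Then $B_n=p^eW_j$ and $A_n=\mu_ep^{je}-e+W_j$. Using the identity above,
\[
        B_n-A_n=(p^e-1)(r-ej)-(\mu_e-e).
\]
Since $0<\mu_e-e<p^e-1$, this difference is never zero, which rules out a tie. It is positive exactly when $r-ej\ge1$, that is, when $j\le s-1$. In that case the $A$-branch wins, and the new value is $w_{(j+1)e}=W_j+\mu_ep^{je}-e=W_{j+1}$, which matches the displayed formula. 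For $j=s$ we have $r-es\le0$, so the difference is negative and the $B$-branch wins at level $(s+1)e$.

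\emph{Step 2: intermediate levels $n=je+t$ with $j\ge1$ and $1\le t\le e-1$, up to $n<(s+1)e$.} By induction $w_{n-e}=p^tW_{j-1}$ and $w_{n-1}=p^{t-1}W_j$. Substituting $W_j=W_{j-1}+\mu_ep^{(j-1)e}-e$ gives
\[
        B_n-A_n=e-e\,p^t<0.
\]
So the $B$-branch strictly wins, and $w_n=p\,w_{n-1}$, which preserves the form $p^tW_j$ within the block. This covers every non-multiple of $e$ up to $se+e-1$. In particular, $w_n=p^{n-es}W_s$ holds for $es\le n<(s+1)e$.

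\emph{Step 3: the tail $n\ge(s+1)e$.} Here we prove $w_n=p^{n-es}W_s$ by induction, showing that $A_n$ strictly exceeds $B_n$. Since $n-e\ge es$, the induction gives $w_{n-e}=p^{n-e-es}W_s$, and then
\[
        A_n-B_n=p^{n-e-es}\bigl(\mu_e p^{es}-(p^e-1)W_s\bigr)-e=p^{n-e-es}\bigl(\mu_e-(p^e-1)(r-es)\bigr)-e.
\]
This is at least $\mu_e-e>0$, because $r-es\le0$. Combining Steps 1--3 yields the branch word $(B^{e-1}A)^sB^\infty$, the absence of ties at every $n\ge e$, and both closed forms.

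The only delicate point is the no-tie claim at the levels $n=(j+1)e$. It rests on the strict bounds $e<\mu_e<p^e-1$, so that $\mu_e-e$ cannot be a multiple of $p^e-1$. I would verify these first for odd $p$ and $e\ge2$, since the whole branch selection hinges on them. Everything else is bookkeeping with the identity $(p^e-1)\frac{p^{je}-1}{p-1}=\mu_e(p^{je}-1)$.
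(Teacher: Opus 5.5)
Your proposal is correct and follows the paper's own proof: a blockwise induction in which the multiples of $e$ are decided by the explicit difference $(p^e-1)(r-ej)-(\mu_e-e)$ (the paper's $(p^e-1)(e(j-1)-r)+\mu_e-e$, up to sign and an index shift), the intermediate levels are decided by the difference $e(p^t-1)$, and the no-tie claim rests on $0<\mu_e-e<p^e-1$. Your Step~3 goes slightly further than the paper: the uniform formula $A_n-B_n=p^{n-e-es}\bigl(\mu_e-(p^e-1)(r-es)\bigr)-e$ makes explicit the tail comparison, covering both multiples of $e$ and intermediate levels, which the paper only asserts via ``the same explicit comparison.''
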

\begin{proof}
For $0\le n<e$, only the $B$-branch is present, so $w_n=p^nr$.

Suppose that $A$ has already occurred at the levels $e,2e,\ldots,(j-1)e$.  Then
\[
        w_{(j-1)e}
        =r+\sum_{h=1}^{j-1}(\mu_ep^{(h-1)e}-e)
        =r+\frac{p^{(j-1)e}-1}{p-1}-e(j-1).
\]
Since the next $e-1$ levels after $(j-1)e$ are $B$-dominated, we have $w_{je-1}=p^{e-1}w_{(j-1)e}$.  Hence
\[
\begin{aligned}
        \bigl(\Delta_{je,e}+w_{(j-1)e}\bigr)-p w_{je-1}
        &=\mu_ep^{(j-1)e}-e-(p^e-1)w_{(j-1)e}\\
        &=(p^e-1)\bigl(e(j-1)-r\bigr)+\mu_e-e.
\end{aligned}
\]
Because $0<\mu_e-e<p^e-1$, this quantity is negative exactly when $r\ge e(j-1)+1$.  Therefore the $A$-branch occurs precisely for $j=1,\ldots,s$, and it never ties with the $B$-branch.

If the $A$-branch occurs at $je$, then for $1\le t\le e-1$ one has
\[
\begin{aligned}
        \bigl(\Delta_{je+t,e}+w_{(j-1)e+t}\bigr)-p w_{je+t-1}
        &=\Delta_{je+t,e}+p^t w_{(j-1)e}\\
        &\qquad -p^t\bigl(\Delta_{je,e}+w_{(j-1)e}\bigr)\\
        &=\Delta_{je+t,e}-p^t\Delta_{je,e}\\
        &=e(p^t-1)>0,
\end{aligned}
\]
so the next $e-1$ levels are $B$-dominated.  If the comparison at a level $je$ is $B$-dominated, then the same explicit comparison shows that every later multiple of $e$ is also $B$-dominated.  The intermediate $e-1$ levels are again $B$-dominated by the displayed formula.  This gives the branch word and the explicit formula for $w_{je+t}$.
\end{proof}

\begin{proof}[Proof of Theorem~\ref{thm:higher-structure}]
We verify the induction package $\mathcal I(N)$ by strong induction on $N$.

The base case $\mathcal I(1)$ follows from \eqref{eq:a1-r}: one has
\[
        a_1(r)=-p^r,
        \qquad
        \Lambda(1)=v_0=r,
\]
so (L) holds at $k=1$.  The clauses (D) and (P) are vacuous for $N=1$, and (S) is tautological for totals $M\le1$.

\smallskip
\noindent\emph{Pure powers.}
Let $(w_n)_{n\ge0}$ be the abstract sequence from Lemma~\ref{lem:abstract-branch-pe}.  As part of the induction on $\mathcal I(N)$, whenever a pure-power level $p^j$ has already been treated we also record the equality $v_j=w_j$.

We treat $n=1$ first.  By Lemma~\ref{lem:first-two-pure-pe} we have $v_0=w_0=r$ and $v_1=w_1=pr$.  Now fix $n\ge2$.  Assume (L) and (D) hold for every $k<p^n$, and (P) holds for every pure-power level below $p^n$.  Then (S) is available for all integers whose highest base-$p$ digit is at most $p^{n-1}$: all pure-power levels below $p^n$ have already been treated, so the inequalities $v_{j+1}\le p v_j$ are known for $0\le j\le n-2$, and the carry argument gives subadditivity for every sum supported in those digit places.

We begin with the term $pR_n$ in Proposition~\ref{prop:pure-decomp}.  Every monomial in $R_n$ is a product of lower coefficients whose total index is $p^n$.  By the lower-bound induction hypothesis and Lemma~\ref{lem:top-digit-lower-pe}, its product valuation is at least $p v_{n-1}$.  The extra factor $p$ in $pR_n$ raises the valuation to at least $p v_{n-1}+1$.

For $p^{r+1}S_n$, every monomial in $S_n$ is a product of lower coefficients whose total index is $p^n-1$.  Hence (L) for lower indices and (S) at lower levels show that every monomial contribution to $p^{r+1}S_n$ has valuation at least
\[
        r+1+\Lambda(p^n-1).
\]
Now
\[
        \Lambda(p^n-1)=(p-1)\sum_{i=0}^{n-1}v_i,
\]
because the base-$p$ expansion of $p^n-1$ has all digits equal to $p-1$ below level $n$.  Using $v_{j+1}\le p v_j$ for $0\le j\le n-2$, we obtain
\[
        \sum_{i=0}^{n-2}(p-1)v_i\ge \sum_{i=0}^{n-2}(v_{i+1}-v_i)=v_{n-1}-v_0.
\]
Therefore
\[
\begin{aligned}
        r+1+\Lambda(p^n-1)
        &=1+v_0+(p-1)\sum_{i=0}^{n-1}v_i\\
        &\ge 1+v_0+v_{n-1}-v_0+(p-1)v_{n-1}\\
        &=1+p v_{n-1}.
\end{aligned}
\]
Hence every contribution in $p^{r+1}S_n$ has valuation strictly larger than $p v_{n-1}$.

If $1<n<e$, Proposition~\ref{prop:pure-decomp} gives
\[
        a_{p^n}(r)=-\gamma_{n,e}a_{p^{n-1}}(r)^p+pR_n+p^{r+1}S_n.
\]
The two remainder estimates above show that
\[
        a_{p^n}(r)=-\gamma_{n,e}a_{p^{n-1}}(r)^p+\bigo\bigl(p^{p v_{n-1}+1}\bigr),
\]
so
\[
        v_n=p v_{n-1}=p w_{n-1}=w_n.
\]
This gives the case $n<e$ in (P).

Now assume $n\ge e$.  Substituting the same remainder bounds into Proposition~\ref{prop:pure-decomp}, we obtain
\[
        a_{p^n}(r)=\alpha_{n,e}a_{p^{n-e}}(r)-\gamma_{n,e}a_{p^{n-1}}(r)^p+\bigo\bigl(p^{p v_{n-1}+1}\bigr).
\]
By the already-treated pure-power levels, we have $v_{n-e}=w_{n-e}$ and $v_{n-1}=w_{n-1}$.  Therefore the two main terms have valuations
\[
        A_{n,e}=\Delta_{n,e}+v_{n-e}=\Delta_{n,e}+w_{n-e},
        \qquad
        B_{n,e}=p v_{n-1}=p w_{n-1}.
\]
By Lemma~\ref{lem:abstract-branch-pe}, these two valuations are never equal.  The no-tie assertion is essential here: because the two candidate leading terms have distinct $p$-adic valuations, no cancellation can raise the valuation of $a_{p^n}(r,e)$.  Since both remainder terms have valuation strictly larger than $B_{n,e}$, the dominant term is exactly the smaller of the $A$- and $B$-branches.  Therefore
\[
        v_n=\min\{A_{n,e},B_{n,e}\}=w_n.
\]
If $A_{n,e}<B_{n,e}$, the leading term comes from the $A$-branch and
\[
        a_{p^n}(r)=\alpha_{n,e}a_{p^{n-e}}(r)+\bigo\bigl(p^{v_n+1}\bigr).
\]
If $B_{n,e}<A_{n,e}$, the leading term comes from the $B$-branch and
\[
        a_{p^n}(r)=-\gamma_{n,e}a_{p^{n-1}}(r)^p+\bigo\bigl(p^{v_n+1}\bigr).
\]
Thus the pure-power clause (P) is proved at level $p^n$, and in all cases $v_n\le p v_{n-1}$.

Once these inequalities hold up to level $n$, the carry argument gives (S) for all integers whose base-$p$ expansion uses only the digits $1,p,\ldots,p^n$: replacing $p$ copies of $p^j$ by one copy of $p^{j+1}$ can only decrease or preserve the total weight because $v_{j+1}\le p v_j$.

\smallskip
\noindent\emph{Divisible non-pure indices.}
Let $k=pM$ with $M\ge1$ and $k$ not a power of $p$.  Write
\[
        M=M_0+M_1p+\cdots+M_sp^s,
        \qquad
        0\le M_i\le p-1.
\]
Assume (L) and (D) hold for every $n'<k$, and (P) holds for every pure-power level below $k$.  Then (S) is available for all integers $\le k$: all pure-power levels $p^j\le k$ have already been treated, so the inequalities $v_{j+1}\le p v_j$ are known up to the relevant level, and the carry argument therefore gives subadditivity for every sum with total at most $k$.

We have the following shift inequality.  If $M'\ge1$ and $p^eM'\le k$, then
\begin{equation}\label{eq:shift-ineq-pe}
        \Lambda(p^eM')\le \mu_eM'-e+\Lambda(M'),
\end{equation}
and the inequality is strict if $M'$ is not a power of $p$.  Indeed, writing $M'=\sum_i M'_ip^i$ and using the already-proved pure-power recurrence gives
\[
\begin{aligned}
        \Lambda(p^eM')
        &=\sum_i M_i'v_{i+e}\\
        &\le \sum_i M_i'\bigl(\mu_ep^i-e+v_i\bigr)\\
        &=\mu_eM'-e\sum_i M_i'+\Lambda(M').
\end{aligned}
\]
Since $M'\ge1$, one has $\sum_i M_i'\ge1$, which proves \eqref{eq:shift-ineq-pe}.  If $M'$ is not a power of $p$, then $\sum_i M_i'\ge2$, so the inequality is strict.

If $q\nmid k$, then $A_k[x^k]=0$.  If $k=qM'$, then $M'$ is not a power of $p$, so \eqref{eq:shift-ineq-pe} is strict.  Hence
\[
        \vp\bigl(A_k[x^k]\bigr)\ge \mu_eM'-e+\Lambda(M')>\Lambda(k).
\]
For the $C$-term, every monomial coefficient is divisible by $p$ because $p\mid k$; see Proposition~\ref{prop:C-global}.  Therefore
\[
        \vp\bigl(p^rC_k[x^k]\bigr)\ge 1+r+\Lambda(k-1)>\Lambda(k)
\]
by (S), since $k=(k-1)+1$ and $\Lambda(1)=r$.  Thus
\begin{equation}\label{eq:D-first-reduction-pe}
        a_k(r)\equiv -B_k[x^k]\pmod{p^{\Lambda(k)+1}}.
\end{equation}

If a monomial in $B_k[x^k]$ contains some positive index not divisible by $p$, then, because $k\ne p$, Proposition~\ref{prop:B-unit-divisible} implies that its coefficient is divisible by $p$.  The induction hypothesis (L) and (S) give product valuation at least $\Lambda(k)$, and the coefficient contributes one extra factor of $p$.  Hence modulo $p^{\Lambda(k)+1}$ one may keep only the divisible truncation and write
\begin{equation}\label{eq:divisible-truncation-pe}
        B_k[x^k]\equiv
        \frac{k!}{p^e}[y^M]\left(\sum_{i=0}^{M-1}\frac{a_{pi}(r)}{(pi)!}y^i\right)^{p^e}
        \pmod{p^{\Lambda(k)+1}}.
\end{equation}
Here the index $i=0$ is harmless: both sides are $a_0(r)=m(0)=1$.  For $1\le i<M$, either $pi$ is a pure power, in which case $a_{pi}(r)=m(pi)$ by definition, or $pi$ is divisible and non-pure, in which case the induction hypothesis gives
\[
        a_{pi}(r)\equiv m(pi)\pmod{p^{\Lambda(pi)+1}}.
\]
Consider a monomial in the expansion of \eqref{eq:divisible-truncation-pe} in which at least one factor $a_{pi}(r)$ is replaced by its error term of valuation at least $\Lambda(pi)+1$.  Every remaining factor has valuation at least its digit weight $\Lambda(pj)$, and the scalar coefficient is $p$-integral by Lemma~\ref{lem:B-integrality}.  If the corresponding multiplicities are $\eta_i$, then subadditivity gives
\[
        1+\sum_i \eta_i\Lambda(pi)\ge 1+\Lambda\!\left(\sum_i \eta_i pi\right)=1+\Lambda(k),
\]
so the whole contribution is $\bigo\bigl(p^{\Lambda(k)+1}\bigr)$.  Therefore
\begin{equation}\label{eq:replace-by-mr-pe}
        B_k[x^k]\equiv
        \frac{k!}{p^e}[y^M]\left(\sum_{i=0}^{M-1}\frac{m(pi)}{(pi)!}y^i\right)^{p^e}
        \pmod{p^{\Lambda(k)+1}}.
\end{equation}
The already-proved pure-power part of the induction gives
\[
        v_{j+2}\le pv_{j+1}
        \qquad (0\le j\le s-1),
\]
so Proposition~\ref{prop:initial-form-B-pe} applies to the polynomial $\mathcal B_{M,r,e}(Y)$.  The right-hand side of \eqref{eq:replace-by-mr-pe} is obtained from $\mathcal B_{M,r,e}(Y)$ by the filtered specialization
\[
        Y_j\longmapsto a_{p^{j+1}}(r)
        \qquad (j\ge0),
\]
for which $\vp(a_{p^{j+1}}(r))=v_{j+1}$ by definition.  Hence a term of filtered degree $>\Lambda(k)$ specializes to $\bigo\bigl(p^{\Lambda(k)+1}\bigr)$, while the degree-$\Lambda(k)$ class $-Y^{[M]}$ specializes to $-m(k)$.  Therefore
\[
        B_k[x^k]\equiv -m(k)\pmod{p^{\Lambda(k)+1}}.
\]
Combining this with \eqref{eq:D-first-reduction-pe} gives
\[
        a_k(r)\equiv m(k)\pmod{p^{\Lambda(k)+1}},
\]
which proves (D) at the index $k$.

\smallskip
\noindent\emph{The global lower bound.}
Fix $k\ge1$.  Assume (L) holds for every $n'<k$, (D) holds for every divisible non-pure $n'<k$, and (P) holds for every pure-power level $p^j\le k$.  Then (S) is available for all integers $\le k$: all relevant pure-power levels have already been treated, so the inequalities $v_{j+1}\le p v_j$ are known up to the required height, and the carry argument gives subadditivity for every sum with total at most $k$.

If $k=p^n$ is a pure power, then (P) gives
\[
        \vp\bigl(a_{p^n}(r)\bigr)=v_n=\Lambda(p^n),
\]
so the lower bound holds with equality.  We may therefore assume that $k$ is not a pure power.  We use the recursion \eqref{eq:ABC-r}.

If $q\nmid k$, then $A_k[x^k]=0$.  If $k=qM$, then by the induction hypothesis and \eqref{eq:shift-ineq-pe},
\[
        \vp\bigl(A_k[x^k]\bigr)\ge \mu_eM-e+\Lambda(M)\ge \Lambda(qM)=\Lambda(k).
\]

Take a monomial $\gamma_B(\eta)a^\eta$ in $B_k[x^k]$.  By Lemma~\ref{lem:B-integrality}, $\gamma_B(\eta)\in\Zp$.  By the induction hypothesis,
\[
        \vp(a^\eta)\ge \sum_{n=0}^{k-1}\eta_n\Lambda(n).
\]
Since $\sum_{n=0}^{k-1}\eta_n n=k$, (S) gives
\[
        \sum_{n=0}^{k-1}\eta_n\Lambda(n)\ge \Lambda(k).
\]
Hence every $B$-monomial has valuation at least $\Lambda(k)$.

Take a monomial $\gamma_C(\eta)a^\eta$ in $C_k[x^k]$.  By Proposition~\ref{prop:C-global}, the coefficient $\gamma_C(\eta)$ is $p$-integral, and by the induction hypothesis,
\[
        \vp(a^\eta)\ge \sum_{n=0}^{k-1}\eta_n\Lambda(n)\ge \Lambda(k-1),
\]
because $\sum_{n=0}^{k-1}\eta_n n=k-1$.  Therefore
\[
        \vp\bigl(p^r\gamma_C(\eta)a^\eta\bigr)
        \ge r+\Lambda(k-1)\ge \Lambda(k),
\]
where the last inequality is the special case $k=(k-1)+1$ of (S).  Since every summand in \eqref{eq:ABC-r} has valuation at least $\Lambda(k)$, so does $a_k(r)$.  This gives (L) and finishes the induction proof of Theorem~\ref{thm:higher-structure}.
\end{proof}

\begin{proposition}[Pure-power branch pattern]\label{prop:branch-pattern-pe}
Let $s=s_{r,e}=\lceil r/e\rceil$.  The branch word for the pure-power recursion is
\[
        (B^{e-1}A)^sB^\infty.
\]
More explicitly, the $A$-branch occurs exactly at $n=e,2e,\ldots,se$; every other layer is $B$-dominated.  Moreover
\begin{equation}\label{eq:v-explicit-periodic}
        v_{je+t}=p^t\left(r+\frac{p^{j e}-1}{p-1}-e\,j\right)
        \qquad(0\le j\le s,\ 0\le t\le e-1),
\end{equation}
and $v_n=p^{n-e s}v_{e s}$ for $n\ge e s$.
\end{proposition}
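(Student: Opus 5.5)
The plan is to reduce Proposition~\ref{prop:branch-pattern-pe} to the abstract statement Lemma~\ref{lem:abstract-branch-pe}, via the identification $v_n=w_n$ for all $n\ge0$. Here $(w_n)$ is the auxiliary sequence defined in that lemma.

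\emph{Step 1: $v_n=w_n$.} I will argue by induction on $n$. The base cases come from Theorem~\ref{thm:higher-structure}(P): $v_0=r=w_0$, and for $1\le n<e$,
\[
v_n=pv_{n-1}=p^nr=w_n.
\]
For $n\ge e$, clause (P) gives
\[
v_n=\min\{\Delta_{n,e}+v_{n-e},\,pv_{n-1}\}.
\]
This is exactly the recursion defining $w_n$, evaluated with the earlier values, so the inductive hypothesis gives $v_n=w_n$. This identification was already recorded inside the proof of Theorem~\ref{thm:higher-structure}; I will simply cite it.

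\emph{Step 2: matching the branches.} At level $n\ge e$, the branch of $v_n$ in (P) is $A$ when $A_{n,e}<B_{n,e}$ and $B$ otherwise. Under $v=w$, the comparison of $A_{n,e}$ with $B_{n,e}$ is the same as the comparison of $\Delta_{n,e}+w_{n-e}$ with $pw_{n-1}$. Hence the branch word of $(v_n)$ coincides with the abstract one, namely $(B^{e-1}A)^sB^\infty$. Here the initial $B^{e-1}$ corresponds to the levels $1\le n<e$, where only the $B$-branch exists.

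The explicit formula \eqref{eq:v-explicit-periodic} and the tail relation $v_n=p^{n-es}v_{es}$ for $n\ge es$ are then transcribed directly from the corresponding formulas for $w_n$ in Lemma~\ref{lem:abstract-branch-pe}.

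\emph{Expected obstacle.} The only point needing care is the completeness of the abstract lemma's proof. It treats the case where the comparison at a multiple $je$ of $e$ is $B$-dominated rather briefly, and one must check that $B$-domination then persists at all later levels. I would redo that check as follows.

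First, after the last $A$-branch at level $se$, suppose all levels from $se$ up to $n-1$ are $B$-dominated. Then $w_{n-1}=p^{n-1-se}w_{se}$. The comparison at level $n$ is
\[
\Delta_{n,e}+w_{n-e}-pw_{n-1}.
\]
I would show this quantity is positive by induction on $n$, using the identity
\[
\Delta_{n+1,e}-p\Delta_{n,e}=e(p-1)>0.
\]

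Second, I would handle the first $e$ levels after $se$, where $n-e$ still lies in the $A$-region or at $se$ itself. These are covered by the computation in the abstract lemma, which gives the difference $e(p^t-1)$.

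Third, at level $(s+1)e$ itself, the explicit difference is
\[
(p^e-1)(es-r)+\mu_e-e>0,
\]
which holds since $es\ge r$.

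Together these three facts show that once the comparison is $B$-dominated beyond $se$, it stays $B$-dominated, and the branch word follows.
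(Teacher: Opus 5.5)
Your proposal is correct and is essentially the paper's proof: the paper likewise uses clause (P) of Theorem~\ref{thm:higher-structure} to show that $v_n$ has the same initial values and the same lag-$e$ recursion as the sequence $w_n$ of Lemma~\ref{lem:abstract-branch-pe}, concludes $v_n=w_n$, and reads the branch word and the explicit formulas off that lemma. Your extra re-check that $B$ persists after level $se$ is not needed for this proposition, since it is part of the lemma, but it is sound: the step $D_{n+1}=pD_n+e(p-1)$ for $D_n:=\Delta_{n,e}+w_{n-e}-pw_{n-1}$ applies once $n+1-e>se$, and your computation at $(s+1)e$ supplies the base case.
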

\begin{proof}
Let $(w_n)_{n\ge0}$ be the abstract sequence of Lemma~\ref{lem:abstract-branch-pe}.  By the pure-power clause of Theorem~\ref{thm:higher-structure}, the sequence $v_n(r,e)$ satisfies the same initial conditions and the same lag-$e$ recursion as $w_n$.  Hence $v_n(r,e)=w_n$ for every $n\ge0$.  The explicit branch word and the formula \eqref{eq:v-explicit-periodic} are therefore exactly those of Lemma~\ref{lem:abstract-branch-pe}.
\end{proof}

\begin{proposition}[Normalized pure-power units]\label{prop:pure-units-pe}
For every $n\ge0$,
\[
        p^{-v_n(r,e)}a_{p^n}(r,e)\equiv \varepsilon_{r,e}(n)\pmod p,
        \qquad
        \varepsilon_{r,e}(n)=(-1)^{1+eN_{r,e}(n)}.
\]
\end{proposition}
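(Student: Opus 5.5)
Write $u_n:=p^{-v_n}a_{p^n}(r,e)\in\Zp^\times$. We argue by induction on $n$, tracking $u_n$ modulo $p$ through the leading-branch formula in clause (P) of Theorem~\ref{thm:higher-structure}, with the branch pattern fixed by Proposition~\ref{prop:branch-pattern-pe}. The base cases $n=0,1$ are Lemma~\ref{lem:first-two-pure-pe}, which gives $u_0\equiv u_1\equiv-1$. This agrees with $\varepsilon_{r,e}(n)=(-1)^{1}=-1$, since $N_{r,e}(n)=0$ for $n<e$.

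\emph{$B$-steps.} Suppose level $n\ge1$ is $B$-dominated. This covers all $1\le n<e$, and every $n\ge e$ other than $e,2e,\ldots,se$. Then
\[
a_{p^n}=-\gamma_{n,e}a_{p^{n-1}}^p+\bigo(p^{v_n+1})
\qquad\text{with}\qquad v_n=pv_{n-1}.
\]
Dividing by $p^{v_n}$ and using $\gamma_{n,e}\equiv-1\pmod p$ from Lemma~\ref{lem:pure-power-B-unit} gives
\[
u_n\equiv u_{n-1}^p\equiv u_{n-1}\pmod p,
\]
by Fermat's little theorem. Since a $B$-level does not change $N_{r,e}$, we have $\varepsilon_{r,e}(n)=\varepsilon_{r,e}(n-1)$. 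So the sign persists, as required.

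\emph{$A$-steps.} At $n=je$ with $1\le j\le s$ we have
\[
a_{p^n}=\alpha_{n,e}a_{p^{n-e}}+\bigo(p^{v_n+1})
\qquad\text{with}\qquad v_n=\Delta_{n,e}+v_{n-e}.
\]
Proposition~\ref{prop:pure-decomp} gives $p^{-\Delta_{n,e}}\alpha_{n,e}\equiv(-1)^e$, hence
\[
u_n\equiv(-1)^e u_{n-e}\pmod p.
\]
Now $N_{r,e}(je)=j$ and $N_{r,e}((j-1)e)=j-1$, so
\[
\varepsilon_{r,e}(je)=(-1)^e\varepsilon_{r,e}((j-1)e).
\]
This matches the recursion, because the induction hypothesis applies at $n-e=(j-1)e$.

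The only point needing care is bookkeeping: the $A$-step uses level $n-e$ rather than $n-1$. The intermediate $B$-levels between $(j-1)e$ and $je-1$ have the same sign as level $(j-1)e$, which is consistent because they all share the same value of $N_{r,e}$. Strong induction over all lower levels therefore handles both branch types uniformly.

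There is no real obstacle. The only substantive inputs are the no-tie/branch identification, so that the error term is genuinely $\bigo(p^{v_n+1})$, and the two unit congruences for $\gamma_{n,e}$ and $\alpha_{n,e}$, all of which are already established.
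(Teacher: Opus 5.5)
Your proposal is correct and is essentially the paper's proof: induction along the branch word of Proposition~\ref{prop:branch-pattern-pe}, where $B$-steps leave the normalized unit unchanged (since $-\gamma_{n,e}\equiv 1\pmod p$ and $u^p\equiv u$), and $A$-steps multiply it by $(-1)^e$ (via $p^{-\Delta_{n,e}}\alpha_{n,e}\equiv(-1)^e$), so the sign is counted by $N_{r,e}(n)$. Your remark that the $A$-step compares level $je$ with level $(j-1)e$ rather than $je-1$, with the intermediate $B$-levels carrying the same sign, makes explicit a bookkeeping point the paper leaves implicit.
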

\begin{proof}
The base case $n=0$ is \eqref{eq:a1-r}.  The case $n=1$ is Lemma~\ref{lem:first-two-pure-pe}.  Along a $B$-branch,
\[
        a_{p^n}(r)\equiv-\gamma_{n,e}a_{p^{n-1}}(r)^p
        \pmod{p^{v_n+1}},
\]
and $-\gamma_{n,e}\equiv1\pmod p$, so the normalized unit is unchanged.  Along an $A$-branch,
\[
        a_{p^n}(r)\equiv\alpha_{n,e}a_{p^{n-e}}(r)
        \pmod{p^{v_n+1}},
\]
and $p^{-\Delta_{n,e}}\alpha_{n,e}\equiv(-1)^e\pmod p$, so the normalized unit is multiplied by $(-1)^e$.  By Proposition~\ref{prop:branch-pattern-pe}, the number of $A$-branches up to level $n$ is $N_{r,e}(n)$, giving the formula.
\end{proof}

\begin{proof}[Proof of Theorem~\ref{thm:radius-pe}]
Part~\textup{(a)} is Proposition~\ref{prop:branch-pattern-pe}.  Part~\textup{(b)} is Proposition~\ref{prop:pure-units-pe}.  If $m=\sum_{i\ge0}m_ip^i$, then Theorem~\ref{thm:higher-structure}(D) gives the displayed leading term for $a_{pm}(r,e)$ when $m$ is not a power of $p$, and Proposition~\ref{prop:pure-units-pe} gives it when $m$ is a power of $p$.  Therefore $\vp(a_k(r,e))=\Lambda_{r,e}(k)$ for every divisible $k$, which is part~\textup{(c)}.

By Proposition~\ref{prop:branch-pattern-pe}, $v_n=\lambda_{r,e}p^n$ for all $n\ge es_{r,e}$.  Hence for $k=\sum k_ip^i$ the difference
\[
        \Lambda_{r,e}(k)-\lambda_{r,e}k
\]
depends only on the finitely many digits below level $es_{r,e}$ and is $\bigo(1)$.  This gives the valuation asymptotic in part~\textup{(c)}.

For the radius, write $f_{r,e}(x)=\sum_{N\ge1}b_Nx^N$ with $b_{k+1}=a_k(r,e)/k!$.  Since $N=k+1$, replacing $N$ by $k$ in the limsup defining the radius does not change the limit.  By the global lower bound and Legendre's formula,
\[
        \vp\left(\frac{a_k(r,e)}{k!}\right)
        \ge \Lambda_{r,e}(k)-\frac{k-\Sp(k)}{p-1}
        \ge -\theta_{r,e}k+\bigo(1).
\]
Thus $\rho(f_{r,e})\ge p^{-\theta_{r,e}}$.  Along $k=p^n$ with $n\ge es_{r,e}$,
\[
        \vp\left(\frac{a_{p^n}(r,e)}{(p^n)!}\right)
        =\lambda_{r,e}p^n-\frac{p^n-1}{p-1}
        =-\theta_{r,e}p^n+\frac1{p-1},
\]
so the opposite inequality holds.  Therefore $\rho(f_{r,e})=p^{-\theta_{r,e}}$.
\end{proof}

\section{Tail-stable extensions}\label{sec:tail-stability}

\begin{lemma}[Tail terms have controlled lower index and integral scalar coefficients]\label{lem:tail-structure}
Let $\vartheta_h\in\Qp$ for all $h\ge1$, and consider the formal germ
\[
        \widetilde\varphi_{r,e}(x)=x^q+qp^r x^{q+1}+q\sum_{h\ge1}\vartheta_h x^{q+1+h},
        \qquad q=p^e,
\]
with inverse B\"ottcher coordinate
\[
        \widetilde f_{r,e}(x)=x\sum_{k\ge0}\frac{\widetilde a_k(r,e)}{k!}x^k,
        \qquad
        \widetilde\varphi_{r,e}\bigl(\widetilde f_{r,e}(x)\bigr)=\widetilde f_{r,e}(x^q).
\]
In the coefficient recursion for $\widetilde a_k(r,e)$, the extra contribution coming from the tail term $q\vartheta_hx^{q+1+h}$ has the form $\vartheta_hT_{k,h}(\widetilde a)$, where $T_{k,h}(\widetilde a)=0$ for $k<h+1$, and every monomial of $T_{k,h}(\widetilde a)$ is
\[
        k!\binom{q+1+h}{\eta_0,\eta_1,\ldots}
        \prod_{n\ge0}\frac{\widetilde a_n(r,e)^{\eta_n}}{(n!)^{\eta_n}}
\]
with
\[
        \sum_{n\ge0}\eta_n=q+1+h,
        \qquad
        \sum_{n\ge0}n\eta_n=k-(h+1).
\]
In particular, every scalar coefficient of $T_{k,h}(\widetilde a)$ is an integer, hence $p$-integral, and every monomial of $T_{k,h}(\widetilde a)$ is a product of lower coefficients with total lower index $k-(h+1)$.
\end{lemma}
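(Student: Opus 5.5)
The plan is to redo the coefficient comparison behind \eqref{eq:ABC-r} with the one extra term present and read off its shape. Write $\widetilde f_{r,e}(x)=x\widetilde g(x)$ with $\widetilde g(x)=\sum_{n\ge0}\widetilde a_n(r,e)x^n/n!$. The B\"ottcher equation $\widetilde\varphi_{r,e}(\widetilde f_{r,e}(x))=\widetilde f_{r,e}(x^q)$ then has, on the left, the extra summand
\[
        q\vartheta_h\,\widetilde f_{r,e}(x)^{q+1+h}=q\vartheta_h\,x^{q+1+h}\widetilde g(x)^{q+1+h}
\]
for each $h\ge1$, besides the clean-family terms.

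First fix the normalization. In the clean recursion, one compares the coefficient of $x^{q+k}$ and divides by $q/k!$, which produces $a_k=A_k[x^k]-B_k[x^k]-p^rC_k[x^k]$. Applying the same normalization to the tail summand gives a contribution of the form $\pm\vartheta_h T_{k,h}(\widetilde a)$, where
\[
        T_{k,h}(\widetilde a)=k!\,[x^{k-(h+1)}]\,\widetilde g(x)^{q+1+h}.
\]
The sign is absorbed into the convention, and the factor $q$ cancels exactly, just as it does in the $C$-term, which is the case $h=0$. If $k<h+1$, the target exponent $k-(h+1)$ is negative, so $T_{k,h}=0$. Only finitely many $h$ contribute at a fixed $k$, so the coefficient-wise statement is legitimate.

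Next, expand $\widetilde g^{q+1+h}$ with the multinomial theorem. Monomials are indexed by multiplicities $\eta_n$ with $\sum\eta_n=q+1+h$. Extracting the coefficient of $x^{k-(h+1)}$ imposes $\sum n\eta_n=k-(h+1)$. The resulting scalar is exactly the displayed expression $k!\binom{q+1+h}{\eta}\prod(n!)^{-\eta_n}$. Every index $n$ occurring with $\eta_n>0$ satisfies $n\le k-h-1<k$, so only lower coefficients appear, with total lower index $k-(h+1)$.

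For integrality, I would split the scalar as
\[
        \binom{q+1+h}{\eta}\cdot\frac{k!}{(k-h-1)!}\cdot\frac{(k-h-1)!}{\prod_n (n!)^{\eta_n}}.
\]
The first factor is a multinomial coefficient. The second is a falling factorial. The third is a multinomial coefficient, because $\sum n\eta_n=k-h-1$ means it counts ordered partitions of a $(k-h-1)$-set into blocks of sizes $n$, each size $n$ repeated $\eta_n$ times. All three are integers, so each scalar is an integer and hence $p$-integral.

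The only delicate step is bookkeeping: one must confirm that the normalization used for $B$ and $C$ also turns the tail into exactly $k!\,[x^{k-h-1}]\widetilde g^{q+1+h}$, with no leftover factor of $q$ or $1/q$. This is checked by the same degree comparison that produced $C_k$.
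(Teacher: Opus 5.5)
Your proposal is correct and follows the paper's proof. As in the paper, you identify $T_{k,h}=k!\,[x^{k-(h+1)}]\widetilde g(x)^{q+1+h}$ under the same $k!/q$ normalization, note that it vanishes for $k<h+1$, read off the monomials from the multinomial expansion, and prove integrality by splitting the scalar into integer factors. Your split $\binom{q+1+h}{\eta}\cdot\frac{k!}{(k-h-1)!}\cdot\frac{(k-h-1)!}{\prod_n(n!)^{\eta_n}}$ differs only cosmetically from the paper's $\frac{k!}{m!}\cdot\frac{Q!}{\eta_0!}\cdot\frac{m!}{\prod_{n\ge1}\eta_n!\,(n!)^{\eta_n}}$ (ordered versus unordered block partitions), and both are valid.
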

\begin{proof}
If $k<h+1$, then the coefficient of $x^k$ in
\[
        x^{q+1+h}\left(\sum_{n\ge0}\widetilde a_n(r,e)x^n/n!\right)^{q+1+h}
\]
vanishes.  Hence $T_{k,h}(\widetilde a)=0$.  Assume now that $k\ge h+1$, and set $m:=k-(h+1)$.  Then
\[
        T_{k,h}(\widetilde a)=k![x^m]\left(\sum_{n\ge0}\frac{\widetilde a_n(r,e)}{n!}x^n\right)^{q+1+h}.
\]
The multinomial expansion gives the displayed monomials, with
\[
        \sum_{n\ge0}\eta_n=q+1+h,
        \qquad
        \sum_{n\ge0}n\eta_n=m.
\]
Hence the total lower index is $m=k-(h+1)$.  Set $Q:=q+1+h$.  Then the scalar coefficient may be written as
\[
        k!\binom{Q}{\eta_0,\eta_1,\ldots}\prod_{n\ge0}\frac1{(n!)^{\eta_n}}
        =\frac{k!}{m!}\cdot \frac{Q!}{\eta_0!}\cdot
        \frac{m!}{\prod_{n\ge1}\eta_n!(n!)^{\eta_n}}.
\]
Here $k!/m!\in\mathbb Z$ and $Q!/\eta_0!\in\mathbb Z$.  The last factor counts set partitions of an $m$-element set into blocks of positive sizes $n\ge1$, repeated $\eta_n$ times, and is therefore an integer.  Hence every scalar coefficient of $T_{k,h}(\widetilde a)$ is an integer.
\end{proof}

\begin{proposition}[Small-tail stability in the higher fibers]\label{prop:small-tail-higher}
Let $r\ge1$ and let $\vartheta_h\in\Qp$ for all $h\ge1$.  Consider the formal germ
\[
        \widetilde\varphi_{r,e}(x)=x^q+qp^r x^{q+1}+q\sum_{h\ge1}\vartheta_h x^{q+1+h},
        \qquad q=p^e,
\]
where
\[
        \vp(\vartheta_h)\ge \Lambda_{r,e}(h+1)+1
        \qquad(h\ge1).
\]
Write
\[
        \widetilde f_{r,e}(x)=x\sum_{k\ge0}\frac{\widetilde a_k(r,e)}{k!}x^k,
        \qquad
        \widetilde\varphi_{r,e}\bigl(\widetilde f_{r,e}(x)\bigr)=\widetilde f_{r,e}(x^q),
\]
and define
\[
        \widetilde m_{r,e}(k):=\prod_{i\ge0}\widetilde a_{p^i}(r,e)^{k_i}
        \qquad\text{for }k=\sum_{i\ge0}k_ip^i.
\]
Since only finitely many $h$ contribute at each fixed coefficient degree, the discussion is formal coefficient-wise.  Then the conclusions of Theorem~\ref{thm:higher-structure} remain valid for the perturbed coefficients $\widetilde a_k(r,e)$, with $\widetilde m_{r,e}(k)$ in place of $m_{r,e}(k)$.  More precisely:
\begin{enumerate}[label=\textup{(\alph*)},leftmargin=2.2em]
\item $\vp(\widetilde a_k(r,e))\ge \Lambda_{r,e}(k)$ for every $k\ge1$.
\item If $p\mid k$ and $k$ is not a power of $p$, then
\[
        \widetilde a_k(r,e)\equiv \widetilde m_{r,e}(k)\pmod{p^{\Lambda_{r,e}(k)+1}}.
\]
\item For every $n\ge0$,
\[
        \vp\bigl(\widetilde a_{p^n}(r,e)\bigr)=v_n(r,e),
        \qquad
        p^{-v_n(r,e)}\widetilde a_{p^n}(r,e)\equiv \varepsilon_{r,e}(n)\pmod p.
\]
Consequently, the pure-power branch word, the valuation asymptotic, and the radius formula of Propositions~\ref{prop:branch-pattern-pe} and~\ref{prop:pure-units-pe}, and Theorem~\ref{thm:radius-pe} remain unchanged.
\end{enumerate}
\end{proposition}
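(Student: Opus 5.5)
The plan is to rerun the strong induction on the package $\mathcal I(N)$ from the proof of Theorem~\ref{thm:higher-structure}, now for the perturbed coefficients $\widetilde a_k=\widetilde a_k(r,e)$. We keep the clean-family data fixed: $v_i=v_i(r,e)$, the digit weight $\Lambda=\Lambda_{r,e}$, and the abstract sequence $w_n$ of Lemma~\ref{lem:abstract-branch-pe}. The point is that none of these change. Only the recursion for $\widetilde a_k$ changes, namely
\[
        \widetilde a_k=A_k[x^k]-B_k[x^k]-p^rC_k[x^k]-\sum_{h\ge1}\vartheta_hT_{k,h}(\widetilde a),
\]
with $A,B,C$ evaluated on the $\widetilde a_\ell$ (the sign of the tail term is immaterial). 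The $A$, $B$, $C$ scalar coefficients are the same as before. Hence every scalar estimate used earlier carries over verbatim: Lemma~\ref{lem:B-integrality}, Proposition~\ref{prop:B-unit-divisible}, Proposition~\ref{prop:C-global}, Lemma~\ref{lem:pure-power-B-unit}, Proposition~\ref{prop:initial-form-B-pe}, and the no-tie Lemma~\ref{lem:abstract-branch-pe}. One preliminary point is integrality: the $\widetilde a_k$ lie in $\Zp$. This follows inductively from the valuation bounds below, because $\vartheta_h$ is integral by hypothesis, so no appeal to Salerno--Silverman is needed.

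The single new ingredient is a \emph{tail estimate}. Assume (L) holds below $k$, and that subadditivity (S) holds for totals $\le k$; (S) depends only on the clean inequalities $v_{j+1}\le pv_j$, which are already proved in Theorem~\ref{thm:higher-structure}. Then
\[
        \vp\bigl(\vartheta_hT_{k,h}(\widetilde a)\bigr)\ge \Lambda(k)+1 .
\]
The argument runs as follows. By Lemma~\ref{lem:tail-structure}, each monomial of $T_{k,h}$ has an integral scalar coefficient and total lower index $k-(h+1)$. By (L) and (S), its valuation is at least $\Lambda(k-h-1)$. Adding $\vp(\vartheta_h)\ge\Lambda(h+1)+1$ gives at least $\Lambda(k-h-1)+\Lambda(h+1)+1\ge\Lambda(k)+1$, using (S) once more. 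Since (S) for $\Lambda$ is a statement about the clean $v_i$ alone, it is available in full from the start. So the tail is strictly above the initial $\Lambda(k)$-graded term at every index $k$.

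With this estimate in hand, each step of the induction goes through as follows.

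\emph{(L).} Every $A$, $B$, $C$ summand has valuation at least $\Lambda(k)$ by the same argument as before, applied to the $\widetilde a$'s. The tail is even higher.

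\emph{Pure powers.} Proposition~\ref{prop:pure-decomp} acquires an extra remainder of valuation at least $\Lambda(p^n)+1$. Here $\Lambda(p^n)=v_n=w_n\le pv_{n-1}$ (clean values), and we are comparing against $\min\{A_{n,e},B_{n,e}\}=v_n$. Using the inductive equalities $\vp(\widetilde a_{p^j})=v_j$ for $j<n$, the two main terms still have distinct valuations $A_{n,e}\ne B_{n,e}$. Every remainder ($pR_n$, $p^{r+1}S_n$, and the tail) exceeds $\min\{A_{n,e},B_{n,e}\}$. Hence $\vp(\widetilde a_{p^n})=v_n$, and the same leading branch persists. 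The low levels $n=0,1$ need a separate check:
\begin{itemize}
\item At $k=1$ the tail is absent, since $T_{1,h}=0$ for $h\ge1$. So $\widetilde a_1=-p^r$.
\item At $k=p$, the tail has valuation at least $\Lambda(p)+1=pr+1$. Lemma~\ref{lem:low-index-pe} is replaced by (L) for indices below $p$, which is available inductively.
\end{itemize}
The normalized units $\varepsilon_{r,e}(n)$ then follow exactly as in Proposition~\ref{prop:pure-units-pe}.

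\emph{(D).} In \eqref{eq:D-first-reduction-pe}, the tail is now absorbed into the $O(p^{\Lambda(k)+1})$ term. The reductions \eqref{eq:divisible-truncation-pe} and \eqref{eq:replace-by-mr-pe} use only (L), (S), integrality of the $B$-coefficients, and (D) below $k$ for $\widetilde a$. Specializing $Y_j\mapsto\widetilde a_{p^{j+1}}$, whose valuation is exactly $v_{j+1}$, Proposition~\ref{prop:initial-form-B-pe} gives $\widetilde a_k\equiv\widetilde m(k)$.

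Finally, the branch word, the valuation asymptotic, and the radius follow word for word from the proof of Theorem~\ref{thm:radius-pe}. That proof uses only (L), the exact pure-power valuations, and Legendre's formula.

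The main obstacle I expect is bookkeeping in the induction order at pure-power levels. Concretely, we must make sure that (L) for the perturbed coefficients at all indices $<p^n$, which is needed for the tail bound, is established before the pure-power step at $p^n$. We must also make sure the clean subadditivity is invoked only in the range where its hypotheses $v_{j+1}\le pv_j$ are known. Since those inequalities are proved globally in the clean setting, I will simply quote Theorem~\ref{thm:higher-structure}(S) for $\Lambda$ throughout. This decouples (S) from the perturbed induction entirely.
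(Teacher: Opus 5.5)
Your proposal is correct and follows the paper's proof. Both rerun the $\mathcal I(N)$ induction with the clean $v_i$, $\Lambda_{r,e}$ and the clean subadditivity held fixed, and both absorb the tail through Lemma~\ref{lem:tail-structure} together with $\Lambda(h+1)+1+\Lambda(k-h-1)\ge\Lambda(k)+1$. The only differences are small streamlinings: you get integrality of the $\widetilde a_k$ and the low-index input at $k=p$ directly from (L), using that (L) for $k<p$ comes out of the general step since $\Lambda(j)=jr$. The paper instead quotes Salerno--Silverman and re-runs the logarithmic argument of Lemma~\ref{lem:low-index-pe} with the tail terms included.
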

\begin{proof}
By \cite[Theorem~3(b)]{SalernoSilverman}, the perturbed coefficients $\widetilde a_k(r,e)$ are $p$-integral.  Comparing coefficients in the perturbed B\"ottcher equation gives
\begin{equation}\label{eq:tail-recursion-higher}
        \widetilde a_k(r,e)=A_k[\widetilde a]-B_k[\widetilde a]-p^rC_k[\widetilde a]-\sum_{1\le h<k}\vartheta_hT_{k,h}(\widetilde a),
\end{equation}
where $A_k$, $B_k$, and $C_k$ are the same $A$-, $B$-, and $C$-terms as in the clean family.

First we check that the low-index input is unchanged.  Write
\[
\begin{aligned}
        \widetilde g_r(x)&:=\sum_{j\ge0}\frac{\widetilde a_j(r,e)}{j!}x^j,\qquad
        A(x):=p^r x\widetilde g_r(x),\\
        B(x)&:=\sum_{h\ge1}\vartheta_hx^{h+1}\widetilde g_r(x)^{h+1}.
\end{aligned}
\]
Then
\[
        \widetilde g_r(x^q)=\widetilde g_r(x)^q\bigl(1+qA(x)+qB(x)\bigr),
\]
and therefore
\[
        \log \widetilde g_r(x^q)=q\log \widetilde g_r(x)+\log\bigl(1+qA(x)+qB(x)\bigr).
\]
Fix $1\le n<p$ and argue by induction on $n$, exactly as in Lemma~\ref{lem:low-index-pe}.  After dividing the coefficient relation by $q$, the clean linear term is still $-p^r[x^{n-1}]\widetilde g_r(x)$.  It remains to check that every contribution containing at least one tail factor is $O(p^{nr+1})$.

For the linear tail term, a summand with $h+1\le n$ contributes
\[
        \vartheta_h[x^{n-(h+1)}]\widetilde g_r(x)^{h+1}.
\]
Because $h+1<p$, one has $\Lambda_{r,e}(h+1)=(h+1)r$, so
\[
        \vp(\vartheta_h)\ge \Lambda_{r,e}(h+1)+1=(h+1)r+1.
\]
A monomial contributing to $[x^{n-(h+1)}]\widetilde g_r(x)^{h+1}$ is a product of lower coefficients with total lower index $n-(h+1)$, hence by the low-index induction has valuation at least $r(n-(h+1))$.  Thus every linear tail contribution has valuation at least
\[
        (h+1)r+1+r(n-(h+1))=nr+1.
\]

For the nonlinear terms, after dividing by $q$ we obtain
\[
        \sum_{m\ge2}(-1)^{m-1}\frac{q^{m-1}}{m}(A(x)+B(x))^m.
\]
The terms involving only $A(x)$ are exactly the clean nonlinear terms already handled in Lemma~\ref{lem:low-index-pe}.  Consider instead a term of order $m\ge2$ containing at least one factor from $B(x)$ and contributing to the coefficient of $x^n$.  By the low-index induction, every coefficient of $x^s$ in $A(x)=p^r x\widetilde g_r(x)$ has valuation at least $rs$.  Likewise, for each fixed $h\ge1$, every coefficient of $x^s$ in $\vartheta_hx^{h+1}\widetilde g_r(x)^{h+1}$ has valuation at least $rs+1$: indeed, the tail factor contributes at least $(h+1)r+1$, and the remaining coefficient factors contribute at least $r(s-(h+1))$.  Hence any product of total degree $n$ containing at least one $B$-factor has valuation at least $nr+1$.  Since $m\le n<p$, the denominator $m$ is a $p$-adic unit, so the prefactor $q^{m-1}/m$ has nonnegative $p$-adic valuation.  Therefore every nonlinear tail contribution is also $O(p^{nr+1})$.

Thus the proof of Lemma~\ref{lem:low-index-pe} is unchanged, so $\vp(\widetilde a_n(r,e))\ge nr$ for $1\le n<p$.  At $k=p$, the same estimate and Lemma~\ref{lem:tail-structure} show that every tail term is $O(p^{pr+1})$, so the proof of Lemma~\ref{lem:first-two-pure-pe} gives
\[
        \vp\bigl(\widetilde a_p(r,e)\bigr)=pr,
        \qquad
        p^{-pr}\widetilde a_p(r,e)\equiv-1\pmod p.
\]

Now assume that $\vp(\widetilde a_n(r,e))\ge \Lambda_{r,e}(n)$ is already known for every $n<k$.  By Lemma~\ref{lem:tail-structure}, every monomial of $T_{k,h}(\widetilde a)$ is a product of lower coefficients with total lower index $k-(h+1)$ and has $p$-integral scalar coefficient.  Repeatedly applying the clean-family subadditivity from Theorem~\ref{thm:higher-structure}\textup{(4)} therefore gives
\[
        \vp\bigl(T_{k,h}(\widetilde a)\bigr)\ge \Lambda_{r,e}(k-(h+1)).
\]
Hence
\[
        \vp\bigl(\vartheta_hT_{k,h}(\widetilde a)\bigr)
        \ge \Lambda_{r,e}(h+1)+1+\Lambda_{r,e}(k-(h+1))
        \ge \Lambda_{r,e}(k)+1.
\]
Thus the whole tail sum in \eqref{eq:tail-recursion-higher} is $O(p^{\Lambda_{r,e}(k)+1})$; when $k=p^n$, this is $O(p^{v_n(r,e)+1})$.

We now run the same induction package as in the proof of Theorem~\ref{thm:higher-structure}.  In the pure-power step, the tail sum is absorbed into the existing $O(p^{v_n+1})$ error term from Proposition~\ref{prop:pure-decomp}, so the same no-tie branch comparison applies.  In the divisible non-pure step, the tail sum is absorbed into $O(p^{\Lambda_{r,e}(k)+1})$, so the same initial-form $B$-term argument gives the same leading monomial $\widetilde m_{r,e}(k)$.  In the global lower-bound step, the tail sum lies strictly above the required lower bound, so the same $A$--$B$--$C$ estimate gives the desired valuation bound.  Consequently the analogues of Theorem~\ref{thm:higher-structure}\textup{(1)}--\textup{(4)} hold for $\widetilde a_k(r,e)$, with $\widetilde m_{r,e}(k)$ in the divisible non-pure clause.  The final assertions about the pure-power branch word, normalized units, valuation asymptotic, and radius then follow exactly as in Propositions~\ref{prop:branch-pattern-pe} and~\ref{prop:pure-units-pe}, and Theorem~\ref{thm:radius-pe}.
\end{proof}

\begin{proposition}[Small $p$-divisible tails in the special fiber]\label{prop:small-tail-special}
Let $\vartheta_h\in\Qp$ for all $h\ge1$, and consider the formal germ
\[
        \widetilde\varphi_{0,e}(x)=x^q+qx^{q+1}+q\sum_{h\ge1}\vartheta_h x^{q+1+h},
        \qquad q=p^e,
\]
with $\vartheta_h\in p\Zp$ for all $h\ge1$.  Write
\[
        \widetilde f_{0,e}(x)=x\sum_{k\ge0}\frac{\widetilde a_k(e)}{k!}x^k,
        \qquad
        \widetilde\varphi_{0,e}\bigl(\widetilde f_{0,e}(x)\bigr)=\widetilde f_{0,e}(x^q),
\]
and let $a_k=a_k(0,e)$ denote the clean special-fiber coefficients.  Then
\[
        \widetilde a_k(e)\equiv a_k\pmod p
        \qquad(k\ge0).
\]
In particular, the digit-sum law and the residue-class congruences of Theorems~\ref{thm:digit-sum} and~\ref{thm:SS-special} remain valid for $\widetilde a_k(e)$.
\end{proposition}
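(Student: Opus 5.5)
We will prove $\widetilde a_k(e)\equiv a_k\pmod p$ by strong induction on $k$, comparing the perturbed recursion with the clean one term by term modulo $p$.

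\emph{Integrality.} By \cite[Theorem~3(b)]{SalernoSilverman}, the perturbed germ has the form $x^q+qx^{q+1}\Zp[\![x]\!]$, since each $\vartheta_h\in p\Zp\subset\Zp$. Hence every $\widetilde a_k(e)$ lies in $\Zp$, and reductions modulo $p$ are legitimate.

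\emph{The perturbed recursion.} Comparing coefficients of the perturbed B\"ottcher equation gives
\[
        \widetilde a_k(e)=A_k[\widetilde a]-B_k[\widetilde a]-C_k[\widetilde a]-\sum_{1\le h<k}\vartheta_hT_{k,h}(\widetilde a),
\]
exactly as in \eqref{eq:tail-recursion-higher} with $r=0$. Here $A_k,B_k,C_k$ are the same polynomial expressions in the lower coefficients as in \eqref{eq:ABC}.

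\emph{The tail vanishes modulo $p$.} By Lemma~\ref{lem:tail-structure}, every scalar coefficient of $T_{k,h}$ is an integer, and $T_{k,h}$ involves only $\widetilde a_n$ with $n<k$. These are $p$-integral, so $T_{k,h}(\widetilde a)\in\Zp$. Since $\vartheta_h\in p\Zp$, the whole tail sum is $\equiv0\pmod p$.

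\emph{The clean terms agree modulo $p$.} The key point is that $A_k$, $B_k$, $C_k$ are polynomials in $a_0,\dots,a_{k-1}$ with $p$-integral scalar coefficients:
\begin{itemize}[leftmargin=2em]
\item $B_k$ by Lemma~\ref{lem:B-integrality};
\item $C_k$ by Proposition~\ref{prop:C-global};
\item $A_k$ by the valuation computation in Proposition~\ref{prop:A-zero}, which in fact shows $A_k[x^k]\equiv0$ for any integral inputs.
\end{itemize}
These integrality statements concern only the scalar coefficients, not the clean values of the $a_\ell$, so they apply verbatim to the perturbed coefficients. Substituting $\widetilde a_\ell\equiv a_\ell\pmod p$ for $\ell<k$ (the induction hypothesis) into an integral polynomial therefore gives
\[
A_k[\widetilde a]-B_k[\widetilde a]-C_k[\widetilde a]\equiv A_k[a]-B_k[a]-C_k[a]=a_k\pmod p.
\]

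\emph{Conclusion.} Combining the last two steps gives $\widetilde a_k(e)\equiv a_k\pmod p$. The base case is $\widetilde a_0=a_0=1$. One may also check $k=1$ directly: $T_{1,h}=0$ for all $h\ge1$, so $\widetilde a_1=-1=a_1$.

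\emph{Consequences.} Theorems~\ref{thm:digit-sum} and~\ref{thm:SS-special} are statements about the residues $a_k\bmod p$ alone, so they transfer immediately to $\widetilde a_k(e)$.

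\emph{Main obstacle.} The only delicate point is confirming that the integrality of the $A$-, $B$-, $C$-scalar coefficients is purely combinatorial and not tied to the clean values. For $B$, this is the carry-defect argument of Lemma~\ref{lem:B-integrality}, which depends only on the multi-index $\eta$. For $C$, it is the Legendre computation \eqref{eq:C-valuation}, which likewise depends only on $\eta$. Both hold, so no further work is needed.
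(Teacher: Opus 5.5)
Your proposal is correct and follows the paper's proof essentially step for step. The shared steps are: integrality of $\widetilde a_k(e)$ via Salerno--Silverman; the perturbed $A$--$B$--$C$ recursion with the extra tail terms $\vartheta_hT_{k,h}$; and the $p$-integrality of the $A$-, $B$- and $C$-scalar coefficients, so that the inductive hypothesis $\widetilde a_\ell\equiv a_\ell \pmod p$ transfers through them. The last shared step is that the tail vanishes modulo $p$, because $\vartheta_h\in p\Zp$ and the coefficients of $T_{k,h}$ are integral by Lemma~\ref{lem:tail-structure}.
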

\begin{proof}
Since $\widetilde\varphi_{0,e}(x)\in x^q+qx^{q+1}\Zp[\![x]\!]$, \cite[Theorem~3(b)]{SalernoSilverman} gives $\widetilde a_k(e)\in\Zp$ for every $k$.  Comparing coefficients in the perturbed B\"ottcher equation gives
\begin{equation}\label{eq:tail-recursion-special}
        \widetilde a_k(e)=A_k[\widetilde a]-B_k[\widetilde a]-C_k[\widetilde a]-\sum_{1\le h<k}\vartheta_hT_{k,h}(\widetilde a),
\end{equation}
where $A_k$, $B_k$, and $C_k$ are the special-fiber terms of Sections~\ref{sec:recursion}--\ref{sec:proof-main-special}.  We claim by induction on $k$ that $\widetilde a_k(e)\equiv a_k\pmod p$.  The case $k=0$ is tautological.

Assume the claim known for all indices $<k$.  The $A$-coefficients are $p$-integral, all $B$-coefficients are $p$-integral by Lemma~\ref{lem:B-integrality}, and all $C$-coefficients are $p$-integral by Proposition~\ref{prop:C-global}.  Hence the induction hypothesis gives
\[
        A_k[\widetilde a]\equiv A_k[a],
        \qquad
        B_k[\widetilde a]\equiv B_k[a],
        \qquad
        C_k[\widetilde a]\equiv C_k[a]
        \pmod p.
\]
On the other hand, $\vartheta_h\in p\Zp$ and Lemma~\ref{lem:tail-structure} shows that every scalar coefficient of $T_{k,h}(\widetilde a)$ is $p$-integral, so
\[
        \vartheta_hT_{k,h}(\widetilde a)\equiv0\pmod p
        \qquad(1\le h<k).
\]
Reducing \eqref{eq:tail-recursion-special} modulo $p$ therefore gives
\[
        \widetilde a_k(e)\equiv A_k[a]-B_k[a]-C_k[a]=a_k\pmod p.
\]
This completes the induction.  The final sentence follows immediately from Theorems~\ref{thm:digit-sum} and~\ref{thm:SS-special}.
\end{proof}

\begin{proof}[Proof of Theorem~\ref{thm:tail-main}]
Part~\textup{(a)} is Proposition~\ref{prop:small-tail-special}, and part~\textup{(b)} is Proposition~\ref{prop:small-tail-higher}.
\end{proof}

\end{document}